\newcommand{\rev}[1]{#1}
\title {A Two Stepsize SQP Method for Nonlinear Equality Constrained Stochastic Optimization}
\titlerunning {Two Stepsize SQP Methods for Stochastic Optimization}
\author{Michael~J.~O'Neill}
\institute{Department of Statistics and Operations Research, University of North Carolina, Chapel Hill, NC, USA; Email: \href{mailto:mikeoneill@unc.edu}{mikeoneill@unc.edu}; 
Corresponding Author: Michael J. O'Neill
}
\date  {\today}
\begin{document}

\maketitle

\begin{abstract}
  We develop a Sequential Quadratic Optimization (SQP) algorithm for minimizing a stochastic objective function subject to deterministic equality constraints. The method utilizes two different stepsizes, one which exclusively scales the component of the step corrupted by the variance of the stochastic gradient estimates and a second which scales the entire step. We prove that this stepsize splitting scheme has a worst-case complexity result which improves over the best known result for this class of problems. In terms of approximately satisfying the constraint violation, this complexity result matches that of deterministic SQP methods, up to constant factors, while matching the known optimal rate for stochastic SQP methods to approximately minimize the norm of the gradient of the Lagrangian. We also propose and analyze multiple variants of our algorithm. One of these variants is based upon popular adaptive gradient methods for unconstrained stochastic optimization while another incorporates a safeguarded line search along the constraint violation. Preliminary numerical experiments show competitive performance against \rev{state of the art algorithms}. In addition, in these experiments, we observe an improved rate of convergence in terms of the constraint violation, as predicted by the theoretical results.
\end{abstract}

\keywords{nonlinear optimization, stochastic optimization, sequential quadratic optimization, worst-case complexity, adaptive stepsizes}


\newcommand{\flow}{f_{\text{\rm low}}}
\newcommand{\kappadec}{\kappa_{\text{\rm dec}}}
\newcommand{\bK}{\bar{K}}
\newcommand{\balpha}{\bar{\alpha}}
\newcommand{\btau}{\bar{\tau}}
\newcommand{\bd}{\bar{d}}
\newcommand{\bg}{\bar{g}}
\newcommand{\bxi}{\bar{\xi}}
\newcommand{\hdelta}{\hat{\delta}}
\newcommand{\argmin}{\text{argmin}}

\newcommand{\xiktri}{\xi_k^{\rm trial}}
\newcommand{\ktaudec}{\kappa_{\tau_{\rm dec}}}
\newcommand{\kmax}{k_{\max}}
\newcommand{\smax}{s_{\max}}
\newcommand{\tauktri}{\tau_k^{\rm trial}}
\newcommand{\taumin}{\tau_{\min}}
\newcommand{\fmin}{f_{\min}}
\newcommand{\etamin}{\eta_{\min}}
\newcommand{\etamax}{\eta_{\max}}

\newcommand{\dktrue}{d_k^{\text{\rm true}}}
\newcommand{\pktrue}{p_k^{\text{\rm true}}}
\newcommand{\uktrue}{u_k^{\text{\rm true}}}
\newcommand{\hdktrue}{\hat{d}_k^{\text{\rm true}}}
\newcommand{\huktrue}{\hat{u}_k^{\text{\rm true}}}
\newcommand{\hyktrue}{\hat{y}_k^{\text{\rm true}}}
\newcommand{\hwktrue}{\hat{w}_k^{\text{\rm true}}}
\newcommand{\wktrue}{w_k^{\text{\rm true}}}
\newcommand{\yktrue}{y_k^{\text{\rm true}}}
\newcommand{\ykstartrue}{y_{k^*}^{\text{\rm true}}}
\newcommand{\tktritrue}{\tau_k^{\rm \text{trial},\text{\rm true}}}
\newcommand{\Tktritrue}{\Tcal_k^{\rm \text{trial},\text{\rm true}}}
\newcommand{\Dktrue}{D_k^{\text{\rm true}}}
\newcommand{\Yktrue}{Y_k^{\text{\rm true}}}
\newcommand{\Uktrue}{U_k^{\text{\rm true}}}
\newcommand{\Wktrue}{W_k^{\text{\rm true}}}
\newcommand{\Uk}{U_k}
\newcommand{\Vk}{V_k}
\newcommand{\Fk}{\mathcal{F}_k}

\newcommand{\R}{\mathbb{R}}
\newcommand{\N}{\mathbb{N}}
\renewcommand{\S}{\mathbb{S}}

\newcommand{\strue}{s^{\text{\rm true}}}

\newcommand{\E}{\mathbb{E}}
\renewcommand{\P}{\mathbb{P}}

\newcommand{\Lbad}{\Lcal_{\text{\rm bad}}}
\newcommand{\Lgood}{\Lcal_{\text{\rm good}}}
\newcommand{\Ebad}{E_{\text{\rm bad}}}
\newcommand{\EbadB}{E_{\text{\rm bad},B}}
\newcommand{\Cdec}{C_{\text{\rm dec}}}
\newcommand{\Idec}{\Ical_{\text{\rm dec}}}
\newcommand{\ktau}{k_{\tau}}
\newcommand{\tcalktritrue}{\Tcal_k^{\rm \text{trial},\text{\rm true}}}
\newcommand{\alphakmin}{\alpha_{k,\min}}
\newcommand{\bkmax}{b_{k,\max}}

\def\Ek#1{E_{k,#1}}
\DeclarePairedDelimiter\ceil{\lceil}{\rceil}

\newtheorem{assumption}{Assumption}


\section{Introduction} \label{sec:intro}

We propose a new algorithm for solving equality constrained optimization problems in which the objective function is the expectation of a stochastic function. Formally, we consider the optimization problem
\begin{equation} \label{eq:fdef}
    \underset{x \in \mathbb{R}^n}{\min} \quad f(x) \quad \text{s.t.} \quad c(x) = 0 \quad \text{with} \quad f(x) = \E[F(x,\omega)],
\end{equation}
where $f : \mathbb{R}^n \rightarrow \mathbb{R}$, $c : \mathbb{R}^n \rightarrow \mathbb{R}^m$, $\omega$ is a random variable with associated probability space $(\Omega, \mathcal{F}, P)$, $F : \mathbb{R}^n \times \Omega \rightarrow \mathbb{R}$, and $\E[\cdot]$ denotes the expectation taken with respect to $P$. Problems of this form arise in numerous applications, including optimal control \cite{JTBetts_2010}, PDE-constrained optimization \cite{FSKupfer_EWSachs_1992,TRees_HSDollar_AJWathen_2010}, and resource allocation \cite{DPBertsekas_1998} as well as modern machine learning applications, such as physics informed neural networks \cite{SCuomo_DCVincenzoSchiano_FGiampaolo_GRozza_MRaissi_FPiccialli_2022,MRaissi_PPerdikaris_GEKarniadakis_2019}, constraining the output labels of deep neural networks \cite{YNandwani_APathak_PSingla_2019} and neural network compression via constraints \cite{CChen_FTung_NVedula_2018}.

The method we design is based on Sequential Quadratic Optimization (SQP) methods, a popular class of algorithms that has seen significant interest in recent years for solving stochastic equality constrained optimization problems, beginning with the influential work of \cite{ASBerahas_FECurtis_DPRobinson_BZhou_2021}. Numerous extensions of this work have been proposed, such as stochastic SQP methods for problems with rank-deficient Jacobians \cite{ASBerahas_FECurtis_MJONeill_DPRobinson_2023}, algorithms for problems with nonlinear inequality constraints \cite{FECurtis_DPRobinson_BZhou_2024B}, worst-case complexity analysis for stochastic SQP methods \cite{FECurtis_MJONeill_DPRobinson_2024}, algorithms which incorporate variance reduction \cite{ASBerahas_JShi_ZYi_BZhou_2023} or adaptive sampling \cite{ASBerahas_RBollapragada_BZhou_2022},  as well as stochastic SQP methods which utilize an exact augmented Lagrangian as a merit function \cite{SNa_MAnitescu_MKolar_2023A,SNa_MAnitescu_MKolar_2023B}.
At each iteration, these algorithms generate a search direction by solving a quadratic optimization problem defined in terms of a stochastic gradient estimate subject to a linearization of the constraints and then produce a new iterate by moving along this search direction. For stochastic SQP methods, the chosen step length is generally scaled in such a way as to control the variance of the stochastic gradient estimates, in a manner similar to stepsizes for stochastic gradient methods in unconstrained optimization. Our algorithm takes a different approach and directly utilizes the orthogonal step decomposition of SQP methods\footnote{Computation of the orthogonal decomposition may be unnecessary in certain cases, see Remark \ref{rem:scaleHk} for details.}. It is well known in the stochastic SQP literature that the normal component of the step decomposition is independent of the current stochastic gradient estimate. Therefore, it is unnecessary to rescale this component by the stepsize which controls the variance in the stochastic gradient estimates in order to ensure convergence. Using this observation, we propose a method which employs two different stepsizes: one which controls the variance of the stochastic gradient estimates and scales only the tangential component and a second stepsize which scales the entire search direction.

We demonstrate the effectiveness of this stepsize splitting approach by developing a worst-case complexity result for our proposed algorithm. We consider the worst-case complexity in terms of finding a point $x$ which satisfies,
\begin{equation} \label{eq:complexity}
    \E[\|\nabla f(x) + \nabla c(x) y\|] \leq \epsilon_{\ell}, \quad \quad \E[\|c(x)\|_1] \leq \epsilon_c,
\end{equation}
where $y \in \mathbb{R}^m$ is some Lagrange multiplier and $\epsilon_\ell$ and $\epsilon_c$ are some small \rev{positive} tolerances. \rev{Prior work has established a variety of complexity results for different SQP algorithms.} \rev{In \cite{JBolte_EPauwels_2016}, convergence rates are established for a class of SQP methods for inequality constrained optimization under the assumption the Kurdyka-\L ojasiewicz (KL) property, with the convergence rate dependent on the \L ojasiewicz exponent. In another line of work, \cite{FFacchinei_VKungurtsev_LLampariello_GScutari_2021} devised an SQP method based on ``ghost penalties" with diminishing stepsizes for inequality constrained optimization. The authors proved convergence results  under a variety of scenarios, including when an initial feasible point is known, in which case a fixed stepsize can be employed to obtain a faster convergence rate. Other approaches with complexity results include \cite{CCartis_NIMGould_PLToint_2013}, which adds a quadratic penalty term involving the constraints to a cubic subproblem at each iteration. When only a first-order method is employed and everywhere LICQ holds, the resulting complexity result is $\mathcal{O}(\epsilon_\ell^{-2})$ and $\mathcal{O}(\epsilon_c^{-2})$.} 

\rev{For first-order, equality deterministic constrained optimization, the best known result (without assuming the KL property), for an} SQP method is given in \cite{FECurtis_MJONeill_DPRobinson_2024}, which proved a worst-case complexity result of $\mathcal{O}(\epsilon_{\ell}^{-2})$ and $\mathcal{O}(\epsilon_c^{-1})$ (this result holds deterministically, not just in expectation).
This work also proved a result for the stochastic SQP method of \cite{ASBerahas_FECurtis_DPRobinson_BZhou_2021}, which was shown to have a worst-case complexity of $\mathcal{O}(\epsilon_{\ell}^{-4})$ and $\mathcal{O}(\epsilon_c^{-2})$ \rev{when a lower bound on the merit parameter is known a-priori} and $\tilde{\mathcal{O}}(\epsilon_{\ell}^{-4})$ and $\tilde{\mathcal{O}}(\epsilon_c^{-2})$ otherwise, where $\tilde{\mathcal{O}}$ ignores logarithmic factors. In terms of $\epsilon_{\ell}$, this result is optimal, due to information theoretic lower bounds for stochastic gradient methods \cite{YArjevani_YCarmon_JCDuchi_DJFoster_NSrebro_BWoodworth_2023}. However, with respect to the constraint violation, it turns out that this result can be improved. We show that the worst-case complexity of the two stepsize stochastic SQP method proposed in this work has a worst-case complexity of $\mathcal{O}(\epsilon_{\ell}^{-4})$ and $\mathcal{O}(\epsilon_c^{-1})$. That is, in terms of convergence in the constraint violation, this result matches that of a \textbf{deterministic} SQP method, modulo the expectation and constant factors. Furthermore, we avoid unnecessary assumptions which were required to derive a complexity result in \cite{FECurtis_MJONeill_DPRobinson_2024} by not estimating a merit parameter during the course of the algorithm. Previously this parameter was estimated using stochastic gradient information, which may be highly inaccurate on any given iteration and thus required additional assumptions in order to ensure convergence. In addition to these results, a number of other works have also proposed methods with known worst-case complexity results for solving \eqref{eq:fdef}, including augmented Lagrangian \cite{LJin_XWang_2022,QShi_XWang_HWang_2026} and stochastic SQP methods \cite{SNa_MAnitescu_MKolar_2023A}. A summary of these worst-case complexity results is given in Table \ref{tab:complexity}.

\begin{table}[t]
\begin{center}
\begin{tabular}{|c|c|c|c|}
\hline
{\bf Algorithm} & {\bf Conditions} & {\bf Stationarity} & {\bf Feasibility} \\ 
\hline
SPD \cite{LJin_XWang_2022} & N/A & $\mathcal{O}\left(\epsilon_\ell^{-6}\right)$ & $\mathcal{O}\left(\epsilon_c^{-6}\right)$ \\
SPD \cite{LJin_XWang_2022} & $x_0$ feasible & $\mathcal{O}\left(\epsilon_\ell^{-5}\right)$ & $\mathcal{O}\left(\epsilon_c^{-5}\right)$ \\
\hline
MLALM \cite{QShi_XWang_HWang_2026} & N/A & \rev{$\mathcal{O}\left(\epsilon_\ell^{-4}\right)$} & \rev{$\mathcal{O}\left(\epsilon_c^{-4}\right)$} \\
MLALM \cite{QShi_XWang_HWang_2026} & $x_0$ near feasible & \rev{$\mathcal{O}\left(\epsilon_\ell^{-3}\right)$} & \rev{$\mathcal{O}\left(\epsilon_c^{-3}\right)$} \\
\hline
SSQP-AL \cite{SNa_MAnitescu_MKolar_2023A}  & N/A &  $\mathcal{O}\left(\epsilon_\ell^{-4}\right)$     & $\mathcal{O} \left(\epsilon_c^{-4}\right)$ \\
\hline
SSQP \cite{FECurtis_MJONeill_DPRobinson_2024} & $\taumin$ known       &  $\mathcal{O}\left(\epsilon_\ell^{-4}\right)$     & $\mathcal{O} \left(\epsilon_c^{-2}\right)$ \\
SSQP \cite{FECurtis_MJONeill_DPRobinson_2024} & $\taumin$ unknown  & $\tilde{\mathcal{O}}\left(\epsilon_\ell^{-4}\right)$     & $\tilde{\mathcal{O}} \left(\epsilon_c^{-2}\right)$ \\
\hline
SSQP-AS \cite{ASBerahas_RBollapragada_BZhou_2022}     & N/A &  $\mathcal{O}\left(\epsilon_\ell^{-4}\right)$     & $\mathcal{O} \left(\epsilon_c^{-2}\right)$ \\
\hline
Algorithm \ref{alg:tsssqp} & non-adaptive &  $\mathcal{O}\left(\epsilon_\ell^{-4}\right)$     & $\mathcal{O}\left(\epsilon_c^{-1}\right)$ \\
Algorithm \ref{alg:tsssqp} & adaptive &  $\tilde{\mathcal{O}}\left(\epsilon_\ell^{-4}\right)$     & $\tilde{\mathcal{O}}\left(\epsilon_c^{-1}\right)$ \\
\hline
\end{tabular}
\caption{Sample complexity of algorithms for solving \eqref{eq:fdef}. Convergence of each algorithm is proven underneath slightly different conditions. All methods except MLALM assume that the Jacobian has full rank at each iteration, while MLALM assumes a certain constraint qualification as well as mean-squared smoothness of the stochastic gradients. SSQP and SSQP-AS also make additional assumptions on the behavior of the merit parameter. \vspace{-0.5cm}}
\label{tab:complexity} 
\end{center}
\end{table}

Unfortunately, the complexity result we prove for our initial algorithm requires certain choices of the stepsizes based on the potential difficultly of estimating parameters of the problem (such as Lipschitz constants and a reasonable setting of the merit parameter). To remedy this, we propose a variant of our method which incorporates stepsizes inspired by adaptive gradient methods for unconstrained stochastic optimization \cite{JDuchi_EHazan_YSinger_2011,BHMcMahan_MStreeter_2010,RWard_XWu_LBottou_2020}. Specifically, we build upon the methodolgy commonly known as Adagrad-Norm, which estimates a stepsize using the prior stochastic gradient estimates. We show that we can generate both of the stepsizes used by our algorithm under this framework and derive a worst-case complexity result for this variant of our method of the order $\tilde{\mathcal{O}}(\epsilon_{\ell}^{-4})$ and $\tilde{\mathcal{O}}(\epsilon_c^{-1})$, without requiring any knowledge of problem specific constants. In addition, both versions of our algorithm guarantee convergence when the stepsizes to be relaxed to lie in a certain set, from which the actual stepsize can be chosen, as was originally proposed in \cite{ASBerahas_FECurtis_DPRobinson_BZhou_2021}. In order to choose a stepsize from this set, we propose a safeguarded linesearch in terms of the constraint violation and show how this can be implemented when the safeguarding is done in terms of the adaptive stepsize rule based on Adagrad-Norm. Finally, we provide preliminary numerical experiments for our algorithm and show that it compares favorably with a state of the art methods. These numerical experiments also demonstrate faster convergence in constraint violation when compared with previously proposed stochastic SQP methods, providing confirmation of our theoretical results.

The rest of this work is organized as follows. In Section \ref{sec:alg}, we formally define and discuss our proposed algorithm and prove some basic properties. We provide a worst-case complexity analysis in Section \ref{sec:analysis} for two variants of our algorithm. A safeguarded linesearch procedure is developed in Section \ref{sec:ls} and numerical experiments are presented in Section \ref{sec:numerical}. We provide concluding remarks in Section \ref{sec:conclusion}.

\subsection{Notation}
We adopt the notation that $\| \cdot \|$ denotes the $\ell_2$-norm for vectors and the vector-induced $\ell_2$-norm
for matrices. The set of
nonnegative integers is denoted as $\N := \{0, 1, 2, \dots, \}$  and we denote the positive real numbers by $\R_{>0}$.

Given $\phi : \R{} \to \R{}$ and $\varphi : \R{} \to [0,\infty)$, we write $\phi(\cdot) = \mathcal{O}(\varphi(\cdot))$ to indicate that $|\phi(\cdot)| \leq c\varphi(\cdot)$ for some $c \in (0,\infty)$.  Similarly, we write $\phi(\cdot) = \tilde{\mathcal{O}}(\varphi(\cdot))$ to indicate that $|\phi(\cdot)| \leq c\varphi(\cdot)|\log^{\bar{c}}(\cdot)|$ for some $c \in (0,\infty)$ and $\bar{c} \in (0,\infty)$.  In this manner, one finds that $\mathcal{O}(\varphi(\cdot)|\log^{\bar{c}}(\cdot)|) \equiv \tilde{\mathcal{O}}(\varphi(\cdot))$ for any $\bar{c} \in (0,\infty)$.

The algorithm that we analyze is iterative, generating in each realization a sequence $\{x_k\}$. We also append the iteration number to other quantities corresponding an iteration, e.g., $f_k := f(x_k)$ for all $k \in \N$.

\subsection{Assumptions and Background}

Throughout, we require the following assumptions on $f$ and $c$:
\begin{assumption}\label{assum:fcsmooth}
  The objective function $f : \R^n \rightarrow \R$ is continuously differentiable and bounded below by $\flow \in \R{}$ and the corresponding gradient function $\nabla f: \R^n \rightarrow \R^n$ is bounded and Lipschitz continuous with constant $L \in (0,\infty)$. The constraint function $c : \R^n \rightarrow \R^m$ $($where $m \leq n$$)$ and the corresponding Jacobian function $J := \nabla c^\top : \R^n \rightarrow \R^{m\times n}$ are bounded, each gradient function $\nabla c_i : \R^{n} \rightarrow \R^{n}$ is Lipschitz continuous with constant $\gamma_i$ for all $i \in \{1,\dots,m\}$, and the singular values of $J \equiv \nabla c^\top$ are bounded below and away from zero.
\end{assumption}

Under this assumption both the gradient of $f$ and the constraint violation are bounded in norm by constants. We denote these constants as $\|\nabla f(x)\| \leq \kappa_g$ and $\|c_k\|_1 \leq \kappa_c$. \rev{In addition, we denote the Lipschitz constant of the Jacobian as $\Gamma := \sum_{i=1}^m \gamma_i$, where $\gamma_i$ is the Lipschitz constant of each $\nabla c_i^{T}$.}

Defining the Lagrangian $\ell : \mathbb{R}^n \times \R^m \rightarrow \R$ corresponding to \eqref{eq:fdef} by $\ell(x,y) := f(x) + c(x)^\top y$, first-order primal-dual stationarity conditions for~\eqref{eq:fdef}, which are necessary for optimality under Assumption~\ref{assum:fcsmooth}, are given by
\begin{equation} \label{eq:optconds}
  0 =
  \left[ \begin{matrix}
    \nabla_x \ell(x,y) \\
    \nabla_y \ell(x,y)
  \end{matrix} \right]
  =
  \left[ \begin{matrix}
    \nabla f(x) + \nabla c(x) y \\
    c(x)
  \end{matrix} \right].
\end{equation}
We note that the complexity measure \eqref{eq:complexity} is simply an approximate version of these optimality conditions.

As stated above, our algorithm generates a search direction at iteration $k$ by solving the following quadratic optimization problem:
\begin{equation} \label{eq:sqpsubproblem}
    \underset{p \in \mathbb{R}^n}{\min} \ f_k + g_k^T p + \frac12 p^T H_k p \ \ \text{subject to} \ \ c_k + J_k p = 0,
\end{equation}
where $g_k$ is the current stochastic gradient estimate. It is well known that this is equivalent to solving the ``Newton SQP system":
\begin{equation} \label{eq:linsys}
    \left[\begin{matrix}
    H_k & J_k^T \\
    J_k & 0
    \end{matrix}
    \right]
    \left[\begin{matrix}
    p_k \\
    y_k
    \end{matrix}
    \right] =
    -\left[\begin{matrix}
    g_k \\
    c_k
    \end{matrix}
    \right].
\end{equation}
In order to ensure the solution of this sub-problem is unique, we require the following assumption on $H_k$.

\begin{assumption} \label{assum:Hbound}
    The sequence $\{H_k\}$ is bounded in norm by $\kappa_H \in \R_{>0}$. In addition, there exists a constant $\zeta \in \R_{>0}$ such that, for all $k \in \N$, the matrix $H_k$ has the property that $u^T H_k u \geq \zeta \|u\|^2$ for all $u \in \R^n$ such that $J_k u = 0$.
\end{assumption}

In order to analyze our algorithm, we utilize the $\ell_1$ merit function $\phi : \R^n \times \R_{>0} \rightarrow \R$:
\begin{equation} \label{eq:phidef}
    \phi(x,\tau) = \tau f(x) + \|c(x)\|_1.
\end{equation}

In the above equation, $\tau$ is the merit parameter which balances between the function value and constraint violation. For the analysis, we also use the following local model of the merit function $l : \R^n \times \R_{>0} \times \R^n \rightarrow \R$ defined as
\begin{equation} \label{eq:qdef}
    l(x, \tau, d) = \tau(f(x) + \nabla f(x)^T d)   + \|c(x) + \nabla c(x)^T d\|_1.
\end{equation}

In addition, we consider the reduction in the model for a direction $d \in \R^n$ with $c(x) + \nabla c(x)^T d = 0$ which is $\Delta l : \R^n \times \R_{>0} \times \R^n \rightarrow \R$ defined as
\begin{equation} \label{eq:deltal}
    \begin{split}
        \Delta l(x, \tau, d) &:= l(x, \tau, 0) - l(x, \tau, d) \\
        &= -\tau \nabla f(x)^T d + \|c(x)\|_1.
    \end{split}
\end{equation}

We wish to stress here that unlike previous works, we do not attempt to estimate a good value of $\tau$. We choose to avoid this as previous work relied upon strong assumptions (such as uniformly bounded stochastic gradients \cite{ASBerahas_FECurtis_DPRobinson_BZhou_2021}, sub-Gaussian stochastic gradients \cite{FECurtis_MJONeill_DPRobinson_2024}, \rev{or direct assumptions on ``good behavior" of the merit parameter \cite{ASBerahas_FECurtis_MJONeill_DPRobinson_2023,FECurtis_DPRobinson_BZhou_2024B,FECurtis_DPRobinson_BZhou_2024A}, which can be implied by the prior assumptions}) in order to prove \rev{the existence of a lower bound on a stochastically estimated merit parameter sequence}. By choosing to relegate the merit function and parameter exclusively to the analysis, we are able to avoid overcomplicating the analysis and adding unnecessary assumptions. \rev{We note that other work on complexity of SQP methods has taken a similar approach, notably \cite{FFacchinei_VKungurtsev_LLampariello_GScutari_2021}.}

\section{Algorithm and Basic Properties} \label{sec:alg}

Recall that at each iteration, a search direction $p_k$ is computed as the solution of \eqref{eq:linsys}. We assume that this step computation is performed in such a way that the orthogonal decomposition
\begin{equation} \label{eq:decomp}
    p_k = u_k + v_k \ \text{where} \ u_k \in \text{Null}(J_k) \ \text{and} \ v_k \in \text{Range}(J_k^T),
\end{equation}
is known\footnote{This is not necessary in certain circumstances, see Remark \ref{rem:scaleHk} for details.}. One important consequence of this decomposition is that the normal component, $v_k$, does not depend on the current stochastic gradient estimate $g_k$. Unlike prior work, we do not directly use $p_k$ as our search direction. Instead, we rescale the tangential component, $u_k$, in order to generate our search direction $d_k$ as follows,
\begin{equation} \label{eq:dkdef}
    d_k = \beta_k u_k + v_k,
\end{equation}
where $\beta_k \in \R_{>0}$. Then, we find the next iterate $x_{k+1}$ by setting $x_{k+1} = x_k + \alpha_k d_k$ for some $\alpha_k \in \mathbb{R}_{>0}$. This procedure is formalized in Algorithm \ref{alg:tsssqp}.

\begin{algorithm}[ht]
  \caption{Generic Two Stepsize Stochastic SQP Algorithm}
  \label{alg:tsssqp}
  \begin{algorithmic}[1]
    \Require $x_0 \in \R^{n}$;
    \For{$k=0,1,\dots$}
    \State Compute stochastic gradient $g_k$.
	  \State Compute $(p_k,y_k)$ as the solution of \eqref{eq:linsys}.
   \State Choose $\beta_k \in \mathbb{R}_{>0}$.
    \State Set $d_k \leftarrow v_k + \beta_k u_k$, where $v_k \in \text{Range}(J_k^T)$ and $u_k \in \text{Null}(J_k)$ are the orthogonal decomposition of $p_k$.
    \State Choose $\alpha_k \in \mathbb{R}_{>0}$.
    \State Set $x_{k+1} \leftarrow x_k + \alpha_k d_k$.
    \EndFor
  \end{algorithmic}
\end{algorithm}

The choice of $\beta_k$ is crucial to ensure convergence of our algorithm, as it controls the variance of the stochastic gradient estimates and plays a similar role as the stepsize in stochastic gradient methods. As such, it is natural to consider $\beta_k$ to be quite small. Indeed, to ensure our \rev{complexity} result, we set $\beta_k = O(1/\sqrt{K})$, where $K$ is the total number of iterations we intend to perform. On the other hand, $\alpha_k$ does \textbf{not} need to control the error in the stochastic gradients and thus may be set independent of $K$.  Thus, $v_k$, which is the component of $d_k$ that drives the algorithm towards constraint satisfaction, is only scaled by a stepsize which is independent of $K$. This is the key insight that leads to our improved complexity.

Algorithm \ref{alg:tsssqp} is written generically, without specifying how to choose the stepsizes $\alpha_k$ and $\beta_k$. We consider two variants for choosing these stepsizes in Section \ref{sec:analysis} and analyze their behavior. First, in Section \ref{subsec:prechosen}, we consider the case where $\beta_k$ is defined by a pre-specified sequence and
\begin{equation}
    \alpha_k \in [\nu, \nu + \theta \beta_k],
\end{equation}
where $\nu \in \mathbb{R}_{>0}$ and  $\theta \in \mathbb{R}_{>0}$. This case is essentially equivalent to the standard stochastic gradient regime with a pre-specified stepsize sequence (modulo the relaxation of $\alpha_k$ into a range, which was originally suggested for stochastic SQP methods in \cite{ASBerahas_FECurtis_DPRobinson_BZhou_2021}). For this method, we prove the complexity result foreshadowed in Section \ref{sec:intro}. However, this result only holds under certain conditions on $\nu$ which depend on the Lipschitz constants of the gradient of $f$ and Jacobian of $c$ as well as a good estimate of the merit parameter $\tau$. Unfortunately, it may not be reasonable to estimate these parameters a-priori.

To remedy this, in Section \ref{subsec:adaptive} we analyze a version of Algorithm \ref{alg:tsssqp} which utilizes adaptive stepsizes based on Adagrad-Norm, which is a popular approach for choosing stepsizes in the stochastic gradient literature \cite{JDuchi_EHazan_YSinger_2011,BHMcMahan_MStreeter_2010,RWard_XWu_LBottou_2020}. In this case, some additional logarithmic factors appear in the final complexity result, but this approach does not require any knowledge of the Lipschitz constants or the merit parameter.

In addition, in both of the cases we analyze in Section \ref{sec:analysis}, $\alpha_k$ may be chosen from a specific range. In Section \ref{sec:ls}, we describe a safeguarded line search procedure which can be used to determine $\alpha_k$. We take adavantage of the assumption that the constraints are deterministic and design a line search which only relies on the constraint violation and does not include stochastic gradient information when computing an $\alpha_k$. In addition, we provide a fully specified algorithm in Algorithm \ref{alg:tsssqpls} that combines this linesearch procedure with an adaptive lower bound based on the Adagrad-Norm stepsizes developed in Section \ref{subsec:adaptive}.

\subsection{Properties of Algorithm \ref{alg:tsssqp}}

First, we restate a basic result from \cite{ASBerahas_FECurtis_DPRobinson_BZhou_2021}.
\begin{lemma} \label{lem:vbound}  (\cite[Lemma 2.9]{ASBerahas_FECurtis_DPRobinson_BZhou_2021})
    There exists $\kappa_v \in \R_{>0}$ such that, for all $k \in \N$, the normal component $v_k$ satisfies $\max\{\|v_k\|, \|v_k\|^2\} \leq \kappa_v \|c_k\|$.
\end{lemma}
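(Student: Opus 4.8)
Since this statement is simply restated from \cite{ASBerahas_FECurtis_DPRobinson_BZhou_2021}, the plan is to reconstruct the short, standard argument. The starting point is to give an explicit formula for the normal component. From the linear system \eqref{eq:linsys} we have $J_k p_k = -c_k$, and since $u_k \in \text{Null}(J_k)$ the decomposition \eqref{eq:decomp} yields $J_k v_k = -c_k$; combined with $v_k \in \text{Range}(J_k^\top)$ this characterizes $v_k$ uniquely. Writing $v_k = J_k^\top w_k$ and using that $J_k J_k^\top$ is invertible under Assumption~\ref{assum:fcsmooth} (its eigenvalues are the squared singular values of $J_k$, which are bounded away from zero), we obtain $w_k = -(J_k J_k^\top)^{-1} c_k$ and hence $v_k = -J_k^\top (J_k J_k^\top)^{-1} c_k$.

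Next I would bound the operator norm of the map $c \mapsto J_k^\top (J_k J_k^\top)^{-1} c$. A singular value decomposition of $J_k$ shows that $\|J_k^\top (J_k J_k^\top)^{-1}\|$ equals $1/\sigma_{\min}(J_k)$, the reciprocal of the smallest singular value of $J_k$. By Assumption~\ref{assum:fcsmooth} the singular values of $J_k$ are bounded below away from zero uniformly in $k$, so there is a constant $\kappa_1 \in \R_{>0}$, independent of $k$, with $\|v_k\| \le \kappa_1 \|c_k\|$ for all $k \in \N$.

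It remains to handle the squared term. Assumption~\ref{assum:fcsmooth} states that $c$ is bounded, and in the notation introduced after that assumption $\|c_k\|_1 \le \kappa_c$, hence $\|c_k\| \le \kappa_c$ for all $k$. Therefore $\|v_k\|^2 \le \kappa_1^2 \|c_k\|^2 \le \kappa_1^2 \kappa_c \|c_k\|$. Setting $\kappa_v := \max\{\kappa_1, \kappa_1^2 \kappa_c\}$ gives $\max\{\|v_k\|, \|v_k\|^2\} \le \kappa_v \|c_k\|$ for all $k \in \N$, as desired.

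I do not anticipate a genuine obstacle here: the only nontrivial ingredient is the uniform lower bound on the singular values of $J_k$, which is precisely what Assumption~\ref{assum:fcsmooth} supplies, and the boundedness of $c$ that converts the $\|c_k\|^2$ estimate into a $\|c_k\|$ estimate. If one wished to avoid the explicit pseudoinverse formula, an alternative is to argue directly: $v_k$ is the minimum-norm solution of $J_k v = -c_k$, restricting $J_k$ to $\text{Range}(J_k^\top)$ gives an isomorphism onto $\R^m$ whose inverse has norm $1/\sigma_{\min}(J_k)$, yielding the same bound.
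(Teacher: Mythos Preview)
Your reconstruction is correct and is exactly the standard argument behind \cite[Lemma~2.9]{ASBerahas_FECurtis_DPRobinson_BZhou_2021}: identify $v_k$ as the pseudoinverse solution $-J_k^\top(J_kJ_k^\top)^{-1}c_k$, bound its norm via the uniform lower bound on $\sigma_{\min}(J_k)$ from Assumption~\ref{assum:fcsmooth}, and absorb one factor of $\|c_k\|$ using the bound $\|c_k\|\le\kappa_c$ to control $\|v_k\|^2$. Note that the present paper does not actually give a proof of this lemma; it simply restates the result and defers to the cited reference, so there is no in-paper argument to compare against beyond confirming that your proof matches the one being invoked.
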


During the analysis of our algorithm, it is often useful to consider the ``true" step computation that would occur if the step was computed using the true gradient, $\nabla f(x_k)$, in place of the stochastic gradient estimate, $g_k$. Specifically, let $(\pktrue, \yktrue)$ be the solution of the linear system:
\begin{equation} \label{eq:linsystrue}
    \left[\begin{matrix}
    H_k & J_k^T \\
    J_k & 0
    \end{matrix}
    \right]
    \left[\begin{matrix}
    \pktrue \\
    \yktrue
    \end{matrix}
    \right] =
    -\left[\begin{matrix}
    \nabla f(x_k) \\
    c_k
    \end{matrix}
    \right].
\end{equation}
In addition, we define
\begin{equation} \label{eq:dktrue}
    \dktrue = \beta_k \uktrue + v_k,
\end{equation}
where $\pktrue = \uktrue + v_k$ with $\uktrue \in \text{Null}(J_k)$ (we recall here that $v_k$ is independent of $g_k$ and $\nabla f(x_k)$ and thus is the same $v_k$ as in \eqref{eq:decomp}).

\begin{lemma} \label{lem:uktruebound}
    Let Assumptions \ref{assum:fcsmooth} and \ref{assum:Hbound} hold. Then,
    \begin{equation*}
        \|\uktrue\| \leq \zeta^{-1} \|\nabla f(x_k)\| + \zeta^{-1} \kappa_H \kappa_v \|c_k\| \leq \zeta^{-1} \kappa_g + \zeta^{-1} \kappa_H \kappa_v \kappa_c =: \kappa_u.
    \end{equation*}
\end{lemma}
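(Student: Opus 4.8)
The plan is to project the first block row of the true Newton SQP system \eqref{eq:linsystrue} onto the null space of $J_k$ and exploit the curvature condition in Assumption~\ref{assum:Hbound}. Writing out the top row of \eqref{eq:linsystrue} gives $H_k \pktrue + J_k^\top \yktrue = -\nabla f(x_k)$. Since $\uktrue \in \text{Null}(J_k)$, we have $(\uktrue)^\top J_k^\top = 0$, so left-multiplying by $(\uktrue)^\top$ eliminates the multiplier term and yields $(\uktrue)^\top H_k \pktrue = -(\uktrue)^\top \nabla f(x_k)$.

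Next I would substitute the decomposition $\pktrue = \uktrue + v_k$ to obtain $(\uktrue)^\top H_k \uktrue = -(\uktrue)^\top \nabla f(x_k) - (\uktrue)^\top H_k v_k$. The left-hand side is bounded below using the second part of Assumption~\ref{assum:Hbound} (with $u = \uktrue$, which satisfies $J_k \uktrue = 0$), giving $\zeta \|\uktrue\|^2 \le (\uktrue)^\top H_k \uktrue$. The right-hand side is bounded above by Cauchy--Schwarz together with the norm bound $\|H_k\| \le \kappa_H$ and Lemma~\ref{lem:vbound} (specifically $\|v_k\| \le \kappa_v \|c_k\|$): $-(\uktrue)^\top \nabla f(x_k) - (\uktrue)^\top H_k v_k \le \|\uktrue\|\,\|\nabla f(x_k)\| + \|\uktrue\|\,\kappa_H \kappa_v \|c_k\|$.

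Combining these bounds gives $\zeta \|\uktrue\|^2 \le \|\uktrue\|\big(\|\nabla f(x_k)\| + \kappa_H \kappa_v \|c_k\|\big)$. If $\uktrue = 0$ the claimed inequality is immediate; otherwise dividing through by $\|\uktrue\| > 0$ produces the first inequality $\|\uktrue\| \le \zeta^{-1}\|\nabla f(x_k)\| + \zeta^{-1}\kappa_H \kappa_v \|c_k\|$. The second inequality then follows by invoking the uniform bounds from Assumption~\ref{assum:fcsmooth}, namely $\|\nabla f(x_k)\| \le \kappa_g$ and $\|c_k\| \le \|c_k\|_1 \le \kappa_c$, and collecting constants into $\kappa_u$.

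There is no substantial obstacle here; the only point requiring a little care is handling the degenerate case $\uktrue = 0$ before dividing, and making sure the curvature condition of Assumption~\ref{assum:Hbound} is applied to the correct vector (the tangential component $\uktrue$, which lies in $\text{Null}(J_k)$ by construction) rather than to $\pktrue$ itself.
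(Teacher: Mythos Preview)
Your proposal is correct and follows essentially the same argument as the paper: project the first block row of \eqref{eq:linsystrue} onto $\uktrue$, use the curvature condition $\zeta\|\uktrue\|^2 \le (\uktrue)^\top H_k \uktrue$, bound the cross term via $\|H_k\|\le\kappa_H$ and Lemma~\ref{lem:vbound}, and divide by $\|\uktrue\|$. Your explicit handling of the degenerate case $\uktrue=0$ is a minor refinement over the paper's presentation.
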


\begin{proof}
    By the first equation of \eqref{eq:linsystrue} and the definition of $\uktrue$, we have \\$ (\uktrue)^T H_k (\uktrue + v_k) = -\nabla f(x_k)^T \uktrue$. Then, by Assumption \ref{assum:Hbound} and Lemma \ref{lem:vbound},
    \begin{align*}
        \zeta \|\uktrue\|^2
        &\leq (\uktrue)^T H_k \uktrue \\
        &= -\nabla f(x_k)^T \uktrue - v_k^T H_k \uktrue \\
        &\leq \|\nabla f(x_k)\| \|\uktrue\| + \kappa_H \kappa_v \|c_k\| \|\uktrue\|.
    \end{align*}
    Dividing this inequality through by $\|\uktrue\|$ proves the first result. The final result follows by Assumption \ref{assum:fcsmooth} and Lemma \ref{lem:vbound}.
    \qed
    \end{proof}

Now we state an important property about the merit parameter $\tau$.

\begin{lemma} \label{lem:taumin}
    Let Assumptions \ref{assum:fcsmooth} and \ref{assum:Hbound} hold and let $\sigma \in (0,1)$. Let $\beta_k \leq \kappa_{\beta}$ hold for all $k$ and define
    \begin{equation} \label{eq:taumin}
        \taumin := \frac{1-\sigma}{\kappa_v (\kappa_\beta \kappa_H \kappa_u + \kappa_g)}.
    \end{equation}
    
    Then,
    \begin{equation} \label{eq:tauminineq}
        \taumin \left(\nabla f(x_k)^T \dktrue + \beta_k (\uktrue)^T H_k \uktrue \right) \leq (1-\sigma) \|c_k\|_1.
    \end{equation}
\end{lemma}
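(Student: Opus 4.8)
The plan is to bound the quantity $\nabla f(x_k)^T \dktrue + \beta_k (\uktrue)^T H_k \uktrue$ from above and then show the specific choice of $\taumin$ makes it small enough relative to $\|c_k\|_1$. First I would expand $\dktrue = \beta_k \uktrue + v_k$, so that
\begin{equation*}
    \nabla f(x_k)^T \dktrue + \beta_k (\uktrue)^T H_k \uktrue = \beta_k\left(\nabla f(x_k)^T \uktrue + (\uktrue)^T H_k \uktrue\right) + \nabla f(x_k)^T v_k.
\end{equation*}
The key observation is that the first equation of \eqref{eq:linsystrue} gives $H_k \pktrue + J_k^T \yktrue = -\nabla f(x_k)$, and since $\uktrue \in \text{Null}(J_k)$ we have $(\uktrue)^T J_k^T \yktrue = 0$; hence $\nabla f(x_k)^T \uktrue = -(\uktrue)^T H_k \pktrue = -(\uktrue)^T H_k \uktrue - (\uktrue)^T H_k v_k$. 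Substituting, the bracketed term collapses: $\nabla f(x_k)^T \uktrue + (\uktrue)^T H_k \uktrue = -(\uktrue)^T H_k v_k$, so the whole expression equals $-\beta_k (\uktrue)^T H_k v_k + \nabla f(x_k)^T v_k$.

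Next I would bound each piece using Cauchy–Schwarz together with the available bounds: $\|H_k\| \leq \kappa_H$ (Assumption \ref{assum:Hbound}), $\|\uktrue\| \leq \kappa_u$ (Lemma \ref{lem:uktruebound}), $\|\nabla f(x_k)\| \leq \kappa_g$ (Assumption \ref{assum:fcsmooth}), and $\|v_k\| \leq \kappa_v \|c_k\|$ (Lemma \ref{lem:vbound}). This yields
\begin{equation*}
    -\beta_k (\uktrue)^T H_k v_k + \nabla f(x_k)^T v_k \leq \beta_k \kappa_H \kappa_u \|v_k\| + \kappa_g \|v_k\| \leq (\kappa_\beta \kappa_H \kappa_u + \kappa_g) \kappa_v \|c_k\|,
\end{equation*}
using $\beta_k \leq \kappa_\beta$. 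Finally, multiplying by $\taumin$ as defined in \eqref{eq:taumin} gives an upper bound of $(1-\sigma)\|c_k\|$, and since $\|c_k\| \leq \|c_k\|_1$ the claimed inequality \eqref{eq:tauminineq} follows. (If $\|c_k\| > \|c_k\|_1$ ever held it would break this last step, but the reverse inequality $\|\cdot\| \le \|\cdot\|_1$ is always true, so this is fine.)

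I do not anticipate a serious obstacle here; the proof is essentially a bookkeeping exercise once the cancellation in the bracketed term is spotted. The one subtle point worth getting right is the sign and the use of $\uktrue \in \text{Null}(J_k)$ to kill the $J_k^T \yktrue$ term — this is exactly the same device used in the proof of Lemma \ref{lem:uktruebound}. A minor care point is which norm ($\|\cdot\|$ versus $\|\cdot\|_1$) appears on the right-hand side of each intermediate inequality; I would carry everything in $\|\cdot\|$ (matching Lemma \ref{lem:vbound}) and only pass to $\|\cdot\|_1$ at the very last step.
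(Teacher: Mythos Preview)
Your proposal is correct and follows essentially the same approach as the paper: both use the first block of \eqref{eq:linsystrue} together with $\uktrue \in \text{Null}(J_k)$ to reduce $\nabla f(x_k)^T \dktrue + \beta_k (\uktrue)^T H_k \uktrue$ to $-\beta_k (\uktrue)^T H_k v_k + \nabla f(x_k)^T v_k$, then bound via Cauchy--Schwarz and Lemmas~\ref{lem:vbound} and~\ref{lem:uktruebound}. The only cosmetic difference is that the paper passes to $\|c_k\|_1$ immediately in the bound, whereas you carry $\|c_k\|$ and invoke $\|c_k\| \leq \|c_k\|_1$ explicitly at the end; both are fine.
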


\begin{proof}
    By \eqref{eq:linsystrue} and the definition of $\uktrue$,
    \begin{align*}
        \nabla f(x_k)^T \dktrue &= \nabla f(x_k)^T (\beta_k \uktrue + v_k) \\
        &= - \beta_k (\uktrue)^T H_k \uktrue - \beta_k v_k^T H_k \uktrue + \nabla f(x_k)^T v_k.
    \end{align*}
    Thus, by Assumptions \ref{assum:fcsmooth} and \ref{assum:Hbound}, Lemma \ref{lem:vbound} and Lemma \ref{lem:uktruebound},
    \begin{align*}
        \nabla f(x_k)^T \dktrue + \beta_k (\uktrue)^T H_k \uktrue &= - \beta_k v_k^T H_k \uktrue + \nabla f(x_k)^T v_k \\
        &\leq (\beta_k \kappa_H \|\uktrue\| + \|\nabla f(x_k)\|) \|v_k\| \\
        &\leq \kappa_v (\kappa_\beta \kappa_H \kappa_u + \kappa_g) \|c_k\|_1.
    \end{align*}
    Combining this with \eqref{eq:taumin}, proves \eqref{eq:tauminineq}.
    \qed
\end{proof}

\begin{remark} \label{rem:scaleHk}
Under the condition that $H_k$ preserves the null space of $J_k$ (i.e. for any $u \in \text{Null}(J_k), H_k u \in \text{Null}(J_k)$), we can sidestep the requirement to compute the orthogonal decomposition of $p_k$ by simply rescaling the matrix $H_k$ by $\beta_k^{-1}$ and directly use the computed direction as $d_k$. \rev{We note that this} additional requirement is necessary when using rescaling in order to prove a result similar to Lemma \ref{lem:taumin}, as otherwise the crossing term $v_k^T H_k \uktrue$ picks up a factor of $\beta_k^{-1}$. This, in turn, means that it is not possible to provide a bound on $\taumin$ that is independent of a \textbf{lower} bound on $\beta_k$, thus causing serious issues in the final complexity result. For the sake of generality, we don't consider this rescaling approach, though when $H_k$ preserves the nullspace of $J_k$ our results still hold, albeit with potentially different constant factors.

\rev{In cases where this condition does not hold, one can compute the step decomposition in a variety of ways. For example, one could directly solve the quadratic subproblem using the nullspace method (see \cite[Section 16.2]{JNocedal_SJWright_2006}), which directly solves for $u_k$ and $v_k$. Alternatively, a simple approach is to compute $p_k$ by \eqref{eq:linsys} and then compute the projection of $p_k$ onto $\text{Null}(J_k)$ to find $u_k$ (and subsequently $v_k$). We take this approach throughout our numerical experiments and observe it incurs only a minor additional cost, see Appendix \ref{app:time} for details. In addition, if one allows for inexact computation of  $v_k$ (which is outside the scope of the current manuscript), a stepsize decomposition such as the one employed in \cite{ASBerahas_FECurtis_MJONeill_DPRobinson_2023} could be used, where $v_k$ is computed as the Cauchy step of a simple trust region subproblem while $u_k$ is computed by solving a linear system comparable to \eqref{eq:linsys}. In such a case, the cost compute $u_k$ and $v_k$ is comparable to the cost to compute $p_k$ by \eqref{eq:linsys}.}
\end{remark}

A direct consequence of the previous lemma is
\begin{equation} \label{eq:Dllb}
    \Delta l(x_k, \taumin, \dktrue) \geq \taumin \beta_k (\uktrue)^T H_k \uktrue + \sigma \|c_k\|_1,
\end{equation}
which will be used to prove the final convergence result. Given this inequality, it should be clear that with an upper bound on $\Delta l$, we would expect convergence in the constraint violation. To see the connection between the quantities in \eqref{eq:Dllb} and first order stationarity, we prove the following lemma, which shows that the quadratic term can be lower bounded in terms of the gradient of the Lagrangian at $x_k$ for a specific Lagrange multipler.

\begin{lemma} \label{lem:ulb}
    Let Assumptions \ref{assum:fcsmooth} and \ref{assum:Hbound} hold. Then,
    \begin{equation*}
        (\uktrue)^T H_k \uktrue \geq \zeta \kappa_H^{-2} \|\nabla f(x_k) + J_k^T \yktrue\|^2 - (1 +2 \kappa_u) \zeta \kappa_v \|c_k\|_1.
    \end{equation*}
\end{lemma}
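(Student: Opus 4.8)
The plan is to connect the quadratic term $(\uktrue)^T H_k \uktrue$ to $\|\nabla f(x_k) + J_k^T \yktrue\|$ by using the first (stationarity) block of the true Newton system~\eqref{eq:linsystrue}, namely $H_k \pktrue + J_k^T \yktrue = -\nabla f(x_k)$, i.e. $\nabla f(x_k) + J_k^T \yktrue = -H_k \pktrue = -H_k(\uktrue + v_k)$. First I would take norms to obtain $\|\nabla f(x_k) + J_k^T \yktrue\| \leq \kappa_H \|\uktrue + v_k\| \leq \kappa_H(\|\uktrue\| + \|v_k\|)$, and then square, using $(a+b)^2 \leq 2a^2 + 2b^2$, to get $\|\nabla f(x_k) + J_k^T \yktrue\|^2 \leq 2\kappa_H^2 \|\uktrue\|^2 + 2\kappa_H^2 \|v_k\|^2$. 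Rearranging this gives a lower bound on $\|\uktrue\|^2$ of the form $\|\uktrue\|^2 \geq \tfrac12 \kappa_H^{-2}\|\nabla f(x_k) + J_k^T \yktrue\|^2 - \|v_k\|^2$.

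Next I would lower bound $(\uktrue)^T H_k \uktrue$ by $\zeta \|\uktrue\|^2$ using Assumption~\ref{assum:Hbound} (valid since $\uktrue \in \text{Null}(J_k)$), and substitute the previous display to obtain
\begin{equation*}
    (\uktrue)^T H_k \uktrue \geq \zeta \kappa_H^{-2}\|\nabla f(x_k) + J_k^T \yktrue\|^2 \cdot \tfrac12 - \zeta \|v_k\|^2.
\end{equation*}
There is a factor-of-two discrepancy with the claimed constant $\zeta\kappa_H^{-2}$ (without the $\tfrac12$); I suspect the intended argument instead keeps the cross term. A cleaner route: expand $\|H_k(\uktrue+v_k)\|^2 = (\uktrue)^T H_k^2 \uktrue + 2 (\uktrue)^T H_k^2 v_k + v_k^T H_k^2 v_k$, and since $\|H_k\|\le\kappa_H$ we have $(\uktrue)^T H_k^2 \uktrue \le \kappa_H (\uktrue)^T H_k \uktrue$ only if $H_k\succeq 0$, which is not assumed — so instead I would write $(\uktrue)^T H_k^2\uktrue \le \kappa_H^2\|\uktrue\|^2$ and use $\zeta\|\uktrue\|^2 \le (\uktrue)^TH_k\uktrue$, giving $\|\nabla f(x_k)+J_k^T\yktrue\|^2 \le \kappa_H^2\zeta^{-1}(\uktrue)^TH_k\uktrue + 2\kappa_H^2\|\uktrue\|\|v_k\| + \kappa_H^2\|v_k\|^2$, and then control the last two terms.

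For those last two terms I would invoke Lemma~\ref{lem:vbound} and Lemma~\ref{lem:uktruebound}: $\|v_k\| \le \kappa_v\|c_k\|$ (and $\|v_k\|^2 \le \kappa_v\|c_k\|$), and $\|\uktrue\| \le \kappa_u$, so that $2\|\uktrue\|\|v_k\| + \|v_k\|^2 \le 2\kappa_u\kappa_v\|c_k\| + \kappa_v\|c_k\| = (1 + 2\kappa_u)\kappa_v\|c_k\|$, and finally $\|c_k\| \le \|c_k\|_1$. Multiplying through by $\zeta\kappa_H^{-2}$ and rearranging then yields exactly the stated inequality
\begin{equation*}
    (\uktrue)^T H_k \uktrue \geq \zeta \kappa_H^{-2} \|\nabla f(x_k) + J_k^T \yktrue\|^2 - (1 + 2\kappa_u)\zeta\kappa_v\|c_k\|_1.
\end{equation*}
The main obstacle is bookkeeping the constants correctly — in particular deciding whether to split $\|a+b\|^2$ symmetrically (which costs a factor of $2$ and does not match the claimed constant) or to keep the cross term $2\|\uktrue\|\|v_k\|$ and absorb it into the $\|c_k\|_1$ slack via the a priori bound $\|\uktrue\|\le\kappa_u$; the latter is what reproduces the stated constants, so that is the route I would follow carefully.
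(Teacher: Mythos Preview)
Your second route—expanding $\|H_k(\uktrue+v_k)\|^2$, bounding $\|H_k\uktrue\|^2 \le \kappa_H^2\|\uktrue\|^2 \le \kappa_H^2\zeta^{-1}(\uktrue)^TH_k\uktrue$, and absorbing the cross term $2\kappa_H^2\|\uktrue\|\|v_k\|$ and the term $\kappa_H^2\|v_k\|^2$ into $(1+2\kappa_u)\kappa_H^2\kappa_v\|c_k\|_1$ via Lemmas~\ref{lem:vbound} and~\ref{lem:uktruebound}—is correct and is exactly what the paper does (the paper just organizes the same identity as $\|H_k\uktrue\|^2 = \|H_k(\uktrue+v_k) - H_kv_k\|^2$ before expanding). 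Your instinct to discard the symmetric split $(a+b)^2\le 2a^2+2b^2$ was right: that route loses the factor of $2$ and does not reproduce the stated constants.
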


\begin{proof}
    By Assumption \ref{assum:Hbound},
    \begin{equation*}
        (\uktrue)^T H_k \uktrue \geq \zeta \|\uktrue\|^2 \geq \zeta \kappa_H^{-2} \|H_k \uktrue\|^2.
    \end{equation*}
    Then, by \eqref{eq:linsystrue} and Lemmas \ref{lem:vbound} and \ref{lem:uktruebound},
    \begin{align*}
        &\|H_k \uktrue + H_k v_k - H_k v_k\|^2 \\
        &\quad= \|H_k \uktrue + H_k v_k\|^2 - 2 v_k^T H_k H_k (\uktrue + v_k) + \|H_k v_k\|^2 \\
        &\quad\geq \|\nabla f(x_k) + J_k^T \yktrue\|^2 - 2 v_k^T H_k H_k \uktrue - \|H_k v_k\|^2 \\
        &\quad\geq \|\nabla f(x_k) + J_k^T \yktrue\|^2 - (1 +2 \kappa_u) \kappa_H^2 \kappa_v \|c_k\|_1,
    \end{align*}
    which proves the result. \qed
\end{proof}

Thus, given these results, we can see that the convergence rate in terms of the gradient of the Lagrangian should be directly related to the choice of $\beta_k$ while convergence in the constraint violation will be largely independent of this stepsize (provided it is chosen to sufficiently control the noise in $g_k$). This is in contrast to the results in \cite{FECurtis_MJONeill_DPRobinson_2024}, where the norm of the constraint violation is multiplied by $\beta_k$ and is the root cause of the improvement in the complexity result for the constraint violation that we prove in the sequel.

We finish this subsection with the following generic descent lemma.

\begin{lemma} \label{lem:descentlemma}
        Let Assumptions \ref{assum:fcsmooth} and \ref{assum:Hbound} hold. Then, with $\taumin$ defined as in \eqref{eq:taumin},
    \begin{align} 
        &\phi(x_k + \alpha_k d_k, \taumin) - \phi(x_k, \taumin) \nonumber \\
        &\quad\leq -\alpha_k \Delta l(x_k, \taumin, \dktrue) + \frac{\alpha_k^2 \beta_k^2}{2}(\taumin L + \Gamma) \|u_k\|^2\label{eq:descent} \\
        &\quad\quad+ \frac{\alpha_k^2}{2} (\kappa_v(\taumin L + \Gamma) + 4)  \|c_k\|_1 + \alpha_k \taumin \nabla f(x_k)^T (d_k - \dktrue). \nonumber
    \end{align}
\end{lemma}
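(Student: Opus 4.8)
The plan is to bound the change in the merit function $\phi(\cdot,\taumin)$ by splitting the step $\alpha_k d_k = \alpha_k(\beta_k u_k + v_k)$ and treating the linear objective term and the constraint term separately, exploiting that $c(x_k) + J_k p_k = 0$ (hence $J_k u_k = 0$ and $J_k v_k = -c_k$) and the Lipschitz properties from Assumption \ref{assum:fcsmooth}. First I would write $\phi(x_k + \alpha_k d_k, \taumin) - \phi(x_k,\taumin) = \taumin(f(x_k + \alpha_k d_k) - f_k) + (\|c(x_k + \alpha_k d_k)\|_1 - \|c_k\|_1)$. For the objective piece, Lipschitz continuity of $\nabla f$ with constant $L$ gives $f(x_k + \alpha_k d_k) - f_k \leq \alpha_k \nabla f(x_k)^T d_k + \tfrac{L}{2}\alpha_k^2\|d_k\|^2$. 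For the constraint piece, componentwise Lipschitz continuity of $\nabla c_i$ (constants $\gamma_i$, collected into a single constant $\Gamma$) together with $J_k d_k = \beta_k J_k u_k + J_k v_k = -c_k$ gives $\|c(x_k + \alpha_k d_k)\|_1 \leq \|c_k + \alpha_k J_k d_k\|_1 + \tfrac{\Gamma}{2}\alpha_k^2\|d_k\|^2 = |1-\alpha_k|\,\|c_k\|_1 + \tfrac{\Gamma}{2}\alpha_k^2\|d_k\|^2$; since $\alpha_k$ will be taken in $(0,1)$ this is $(1-\alpha_k)\|c_k\|_1 + \tfrac{\Gamma}{2}\alpha_k^2\|d_k\|^2$. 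Combining, the first-order terms assemble into $\alpha_k(\taumin \nabla f(x_k)^T d_k - \|c_k\|_1)$ plus the second-order remainder $\tfrac{\alpha_k^2}{2}(\taumin L + \Gamma)\|d_k\|^2$.

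Next I would convert the first-order term from $d_k$ into $\dktrue$. Write $\taumin \nabla f(x_k)^T d_k - \|c_k\|_1 = \big(\taumin \nabla f(x_k)^T \dktrue - \|c_k\|_1\big) + \taumin \nabla f(x_k)^T(d_k - \dktrue) = -\Delta l(x_k, \taumin, \dktrue) + \taumin \nabla f(x_k)^T(d_k - \dktrue)$, using the definition \eqref{eq:deltal} (valid because $c_k + J_k \dktrue = 0$). This produces the $-\alpha_k \Delta l(x_k,\taumin,\dktrue)$ term and the last term of \eqref{eq:descent}. It remains to handle $\|d_k\|^2$. Since $u_k \perp v_k$, $\|d_k\|^2 = \beta_k^2\|u_k\|^2 + \|v_k\|^2$, and by Lemma \ref{lem:vbound} $\|v_k\|^2 \leq \kappa_v\|c_k\|$, and since $\|c_k\| \le \|c_k\|_1$ (for the relevant norms; here $\|\cdot\|$ on $c_k$ can be bounded by $\|c_k\|_1$) we get $\|d_k\|^2 \le \beta_k^2\|u_k\|^2 + \kappa_v\|c_k\|_1$. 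Therefore $\tfrac{\alpha_k^2}{2}(\taumin L + \Gamma)\|d_k\|^2 \le \tfrac{\alpha_k^2\beta_k^2}{2}(\taumin L + \Gamma)\|u_k\|^2 + \tfrac{\alpha_k^2}{2}\kappa_v(\taumin L + \Gamma)\|c_k\|_1$, which matches the $\|u_k\|^2$ term and part of the $\|c_k\|_1$ coefficient in \eqref{eq:descent}.

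The remaining discrepancy is the additive $4$ in the coefficient $\kappa_v(\taumin L + \Gamma) + 4$ multiplying $\tfrac{\alpha_k^2}{2}\|c_k\|_1$. I expect this to be the one step that requires care: it should come from a slightly sharper treatment of the constraint term. Rather than bounding $|1-\alpha_k|\|c_k\|_1$ by $(1-\alpha_k)\|c_k\|_1$ directly, one keeps the $-\alpha_k\|c_k\|_1$ contribution that feeds $\Delta l$ and accounts for the second-order constraint perturbation more carefully — in particular bounding the cross terms arising from $\|c_k + \alpha_k J_k d_k\|_1$ when one does not assume $\alpha_k J_k d_k$ exactly cancels a fraction of $c_k$, or absorbing a $\|c_k\|_1$-proportional slack of size $2\alpha_k^2\|c_k\|_1$ from each of two places (the $f$ linearization at the perturbed point and the constraint linearization). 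Concretely, I would track an extra $O(\alpha_k^2\|c_k\|_1)$ term and show its coefficient is at most $4$ using $\alpha_k \in (0,1)$ and $\|v_k\|\le \kappa_v^{1/2}\|c_k\|_1^{1/2}\le \max\{1,\kappa_v\}$-type crude bounds combined with $\|c_k\|_1 \le \kappa_c$ where helpful; the precise bookkeeping is routine once the structure above is in place. Everything else is a direct application of the two Lipschitz inequalities, orthogonality of the decomposition, Lemma \ref{lem:vbound}, and the definition of $\Delta l$.
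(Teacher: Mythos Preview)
Your overall skeleton is correct, and you have identified the right ingredients: the Lipschitz descent bounds for $f$ and $c$, the identity $J_k d_k = -c_k$ giving $\|c_k + \alpha_k J_k d_k\|_1 = |1-\alpha_k|\,\|c_k\|_1$, the swap from $d_k$ to $\dktrue$ via \eqref{eq:deltal}, and the orthogonal splitting $\|d_k\|^2 = \beta_k^2\|u_k\|^2 + \|v_k\|^2 \le \beta_k^2\|u_k\|^2 + \kappa_v\|c_k\|_1$. All of that matches the paper's proof.

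The gap is precisely where you sense it: the $+4$. Your explanation for it is wrong. It does \emph{not} come from cross terms in the constraint linearization (indeed $J_k d_k = -c_k$ exactly, so there are no such cross terms), nor from any crude $\kappa_v^{1/2}\|c_k\|_1^{1/2}$-type bound. It comes from the fact that the lemma does \emph{not} assume $\alpha_k \in (0,1)$. Your sentence ``since $\alpha_k$ will be taken in $(0,1)$'' is an unjustified assumption: in the algorithm $\alpha_k \in [\nu, \nu + \theta\beta_k]$ (or the adaptive analogue), which can exceed $1$. The paper handles this by a case split. When $\alpha_k \le 1$ you get $|1-\alpha_k|\|c_k\|_1 - \|c_k\|_1 = -\alpha_k\|c_k\|_1$, exactly as you wrote. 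When $\alpha_k > 1$ you instead get
\[
(\alpha_k - 1)\|c_k\|_1 - \|c_k\|_1 = -\alpha_k\|c_k\|_1 + 2(\alpha_k - 1)\|c_k\|_1 \le -\alpha_k\|c_k\|_1 + 2\alpha_k^2\|c_k\|_1,
\]
using $\alpha_k - 1 < \alpha_k^2$ for $\alpha_k > 1$. Since $2\alpha_k^2\|c_k\|_1 \ge 0$, this bound covers both cases uniformly, and writing $2\alpha_k^2 = 4\cdot\tfrac{\alpha_k^2}{2}$ is exactly the source of the $+4$ in the coefficient $\kappa_v(\taumin L + \Gamma) + 4$. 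Once you drop the $\alpha_k < 1$ assumption and insert this case split, the rest of your argument goes through verbatim.
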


\begin{proof}
    By $L$-Lipschitz continuity of $\nabla f(x)$ and $\Gamma$-Lipschitz continuity of $J_k$, we have
    \begin{align*}
        &\phi(x_k + \alpha_k d_k, \taumin) - \phi(x_k, \taumin) \\\
        &\leq \alpha_k \taumin \nabla f(x_k)^T d_k + \|c_k + \alpha_k J_k d_k\|_1 - \|c_k\|_1 + \frac{\alpha_k^2}{2}(\taumin L + \Gamma) \|d_k\|^2 \\
        &= \alpha_k \taumin \nabla f(x_k)^T \dktrue + |1-\alpha_k| \|c_k\|_1 - \|c_k\|_1 + \frac{\alpha_k^2}{2}(\taumin L + \Gamma) \|d_k\|^2 \\
        &\quad+ \alpha_k \taumin \nabla f(x_k)^T (d_k - \dktrue),
    \end{align*}
    where the equality follows from $J_k d_k = -c_k$.

    \rev{Using the fact that $|1-\alpha_k| \leq 1 - \alpha_k + 2\alpha_k^2$, we have
    \begin{align*}
        &\phi(x_k + \alpha_k d_k, \taumin) - \phi(x_k, \taumin) \\
        &\leq \alpha_k \taumin \nabla f(x_k)^T \dktrue -\alpha_k \|c_k\|_1 + 2 \alpha_k^2 \|c_k\|_1 + \frac{\alpha_k^2}{2}(\taumin L + \Gamma) \|d_k\|^2 \\
        &\quad+ \alpha_k \taumin \nabla f(x_k)^T (d_k - \dktrue).
    \end{align*}
    }Then, using the orthogonal decomposition $d_k = \beta_k u_k + v_k$, Lemma \ref{lem:vbound}, and \eqref{eq:deltal}, we have
    \begin{align*}
        &\phi(x_k + \alpha_k d_k, \taumin) - \phi(x_k, \taumin) \nonumber \\
        &\leq \alpha_k \taumin \nabla f(x_k)^T \dktrue - \alpha_k \|c_k\|_1 + 2\alpha_k^2 \|c_k\|_1 \\
        &\quad+ \frac{\alpha_k^2}{2}(\taumin L + \Gamma) (\beta_k^2 \|u_k\|^2 + \|v_k\|^2) + \alpha_k \taumin \nabla f(x_k)^T (d_k - \dktrue) \\
        &\leq -\alpha_k \Delta l(x_k, \taumin, \dktrue) + \frac{\alpha_k^2 \beta_k^2}{2}(\taumin L + \Gamma) \|u_k\|^2 \\
        &\quad+ \frac{\alpha_k^2}{2} (\kappa_v(\taumin L + \Gamma) + 4)  \|c_k\|_1 + \alpha_k \taumin \nabla f(x_k)^T (d_k - \dktrue),
    \end{align*}
    proving the result. \qed
\end{proof}

\subsection{Stochastic Assumptions and Properties}

In order to analyze the convergence of our algorithm, let $\mathcal{F}_k$ denote the natural filtration adapted to Algorithm \ref{alg:tsssqp} and let $\E_k[\cdot] = \E[\cdot | \mathcal{F}_k]$. Under these definitions, we have the following assumption on our stochastic gradient estimates, $g_k$.

\begin{assumption}\label{assum:stochG}
  There exists $M \in \R{}_{>0}$ such that, for all $k$, one finds
  \begin{equation}\label{eq:sg}
    \E_k[g_k] = \nabla f(x_k)\ \ \text{and}\ \ \E_k[\|g_k - \nabla f(x_k)\|_2^2] \leq M.
  \end{equation}
\end{assumption}

This assumption is largely standard in the stochastic gradient literature. We note that relaxing the uniformly bounded variance assumption to an assumption which allows the variance to grow with the norm of the gradient of $f$ (such as in \cite{LBottou_FECurtis_JNocedal_2018}) is no more general under Assumption \ref{assum:fcsmooth} since $\|\nabla f(x)\| \leq \kappa_g$.

Under Assumption \ref{assum:stochG}, we have the following properties.

\begin{lemma} \label{lem:stochasticbounds}
    Let Assumptions \ref{assum:fcsmooth}, \ref{assum:Hbound}, and \ref{assum:stochG} hold. Then, $\E_k[u_k] = \uktrue$, $\E_k[y_k] = \yktrue$,
    \begin{equation*}
        \E_k[\|u_k-\uktrue\|^2] \leq \zeta^{-2} M,
    \end{equation*}
    and
    \begin{equation*}
        \rev{\E_k[\|u_k\|^2] \leq \|\uktrue\|^2 + \zeta^{-2} M \leq \zeta^{-1} (\uktrue)^T H_k \uktrue + \zeta^{-2} M.}
    \end{equation*}
\end{lemma}

\begin{proof}
    The first two claims follow directly by the statement of \cite[Lemma 3.8]{ASBerahas_FECurtis_DPRobinson_BZhou_2021} To prove the third result, let $Z_k$ be an orthogonal basis for the null space of $J_k$ (which, by Assumption \ref{assum:fcsmooth} is a matrix in $\R^{n \times (n-m)})$ and let $u_k = Z_k w_k$ and $\uktrue = Z_k \wktrue$. Then, by \eqref{eq:linsys}, it follows that
    \begin{equation*}
        Z_k w_k = - Z_k(Z_k^T H_k Z_k)^{-1} Z_k^T (g_k + H_k v_k).
    \end{equation*}
    Similarly,
    \begin{equation*}
        Z_k \wktrue = - Z_k(Z_k^T H_k Z_k)^{-1} Z_k^T (\nabla f(x_k) + H_k v_k),
    \end{equation*}
    so that
    \begin{equation*}
        u_k - \uktrue = Z_k(Z_k^T H_k Z_k)^{-1} Z_k^T (\nabla f(x_k) - g_k)
    \end{equation*}
    and thus, by Assumptions \ref{assum:Hbound} and \ref{assum:stochG},
    \begin{equation*}
        \E_k[\|u_k - \uktrue\|^2] \leq \E_k[\|Z_k(Z_k^T H_k Z_k)^{-1} Z_k^T\|^2 \|\nabla f(x_k) - g_k\|^2] \leq \zeta^{-2} M.
    \end{equation*}

    \rev{The final set of inequalities follows directly by the first and third claim, with final inequality due to Assumption \ref{assum:Hbound}.}
    \qed
\end{proof}

\rev{In addition, we have the following property under the stronger assumption that $\beta_k$ is measurable to $\mathcal{F}_k$.}

\begin{lemma} \label{lem:stochasticdkbound}
    \rev{Let Assumptions \ref{assum:fcsmooth}, \ref{assum:Hbound}, and \ref{assum:stochG} hold and let $\beta_k$ be measurable to $\mathcal{F}_k$. Then, $\E_k[d_k] = \dktrue$ and 
    \begin{equation*}
        \E_k[\|d_k-\dktrue\|] \leq \beta_k \zeta^{-1} \sqrt{M}.
    \end{equation*}}
\end{lemma}

\begin{proof}
    Since $\beta_k$ is measurable to $\mathcal{F}_k$, it follows that
    \begin{equation*}
        \E_k[d_k] = \beta_k \E_k[u_k] + v_k = \beta_k \uktrue + v_k = \dktrue.
    \end{equation*}
    For the second result, we have that 
    \begin{equation*}
        d_k - \dktrue = \beta_k(u_k - \uktrue) = \beta_k Z_k(Z_k^T H_k Z_k)^{-1} Z_k^T (\nabla f(x_k) - g_k).
    \end{equation*}
    Thus, by Assumptions \ref{assum:Hbound} and \ref{assum:stochG}, as well as Jensen's inequality,
    \begin{equation*}
        \E_k[\|d_k - \dktrue\|] \leq \beta_k \E_k[\|Z_k(Z_k^T H_k Z_k)^{-1} Z_k^T\| \|\nabla f(x_k) - g_k\|] \leq \beta_k \zeta^{-1} \sqrt{M}.
    \end{equation*} \qed
\end{proof}

\section{Convergence Analysis} \label{sec:analysis}

In this section, we derive our main convergence results for two variants of Algorithm \ref{alg:tsssqp}, which differ on how $\alpha_k$ and $\beta_k$ are chosen at each iteration.

\subsection{Covergence with Pre-specified Stepsize Sequences} \label{subsec:prechosen}

Throughout this subsection, we analyze Algorithm \ref{alg:tsssqp} when $\{\beta_k\}$ is a pre-specified sequence  and $\alpha_k$ lies in a pre-specified range, i.e.,
\begin{equation} \label{eq:fixedab}
    \{\beta_k\} \subset \R_{>0}, \quad \quad \alpha_k \in [\nu, \nu + \theta \beta_k], \ \ \forall k,
\end{equation}
for some $\nu \in \mathbb{R}_{>0}$ and $\theta \in \mathbb{R}_{>0}$.

Under this stepsize scheme, we prove a preliminary result about the final term that appears in Lemma \ref{lem:descentlemma}.
\begin{lemma} \label{lem:innerprodfixed}
    Let Assumptions \ref{assum:fcsmooth}, \ref{assum:Hbound}, and \ref{assum:stochG} hold. Then,
    \begin{equation*}
        \E_k[\alpha_k \taumin \nabla f(x_k)^T(d_k - \dktrue)] \leq \beta_k^2 \theta \taumin \kappa_g \zeta^{-1} \sqrt{M}.
    \end{equation*}
\end{lemma}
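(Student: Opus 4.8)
The plan is to condition on $\mathcal{F}_k$ and exploit the fact that, since $\{\beta_k\}$ is a pre-specified (hence $\mathcal{F}_k$-measurable) sequence, Lemma~\ref{lem:stochasticbounds} applies. The subtlety is that $\alpha_k$ is \emph{not} necessarily $\mathcal{F}_k$-measurable — it is only required to lie in the range $[\nu,\nu+\theta\beta_k]$ — and it may well be chosen using $g_k$ (e.g.\ via the line search of Section~\ref{sec:ls}, though that one uses only constraint information; still, the analysis should not assume independence). So the first step is to split $\alpha_k = \nu + (\alpha_k - \nu)$ and write
\begin{equation*}
    \alpha_k \taumin \nabla f(x_k)^T(d_k - \dktrue) = \nu \taumin \nabla f(x_k)^T(d_k - \dktrue) + (\alpha_k - \nu)\taumin \nabla f(x_k)^T(d_k - \dktrue).
\end{equation*}

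For the first term, $\nu$ and $\taumin$ and $\nabla f(x_k)$ are all $\mathcal{F}_k$-measurable, so $\E_k[\nu \taumin \nabla f(x_k)^T(d_k - \dktrue)] = \nu \taumin \nabla f(x_k)^T \E_k[d_k - \dktrue] = 0$ by the $\E_k[d_k] = \dktrue$ part of Lemma~\ref{lem:stochasticbounds}. For the second term, I would bound it pathwise before taking the expectation: since $0 \le \alpha_k - \nu \le \theta\beta_k$, Cauchy--Schwarz gives
\begin{equation*}
    (\alpha_k - \nu)\taumin \nabla f(x_k)^T(d_k - \dktrue) \le \theta\beta_k \taumin \|\nabla f(x_k)\| \, \|d_k - \dktrue\| \le \theta\beta_k \taumin \kappa_g \|d_k - \dktrue\|,
\end{equation*}
using Assumption~\ref{assum:fcsmooth}. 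Taking $\E_k[\cdot]$ and applying the last bound of Lemma~\ref{lem:stochasticbounds}, namely $\E_k[\|d_k - \dktrue\|] \le \beta_k \zeta^{-1}\sqrt{M}$, yields $\E_k[(\alpha_k - \nu)\taumin \nabla f(x_k)^T(d_k - \dktrue)] \le \theta\beta_k \taumin \kappa_g \cdot \beta_k \zeta^{-1}\sqrt{M} = \beta_k^2 \theta \taumin \kappa_g \zeta^{-1}\sqrt{M}$. Combining the two terms gives the claim.

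The only mild obstacle is being careful that the measurability of $\beta_k$ is what licenses $\E_k[d_k] = \dktrue$ (it is, by the hypothesis \eqref{eq:fixedab} that $\{\beta_k\}$ is pre-specified), and that the pathwise bound on $\alpha_k - \nu$ must be used \emph{before} expectation since $\alpha_k$ need not be conditionally independent of $d_k$. Everything else is Cauchy--Schwarz plus the already-established moment bounds, so I do not anticipate any real difficulty.
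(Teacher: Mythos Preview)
Your proof is correct and essentially identical to the paper's: the paper introduces the reparametrization $\alpha_k = \nu + \xi_k\theta\beta_k$ with $\xi_k\in[0,1]$, which is exactly your splitting $\alpha_k = \nu + (\alpha_k-\nu)$, and then proceeds with the same vanishing-of-the-$\nu$-term via $\E_k[d_k]=\dktrue$ followed by Cauchy--Schwarz and the bound $\E_k[\|d_k-\dktrue\|]\le\beta_k\zeta^{-1}\sqrt{M}$. Your explicit remarks on the measurability of $\beta_k$ and the need to bound $\alpha_k-\nu$ pathwise before taking expectation are accurate and, if anything, make the argument slightly more careful than the paper's presentation.
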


\begin{proof}
    Let $\xi_k \in [0,1]$ be the random variable such that $\alpha_k = \nu + \xi_k \theta \beta_k$. Then, by Lemma \rev{\ref{lem:stochasticdkbound}} and the fact that $\nu$ and $\beta_k$ are measurable to $\mathcal{F}_k$,
    \begin{align*}
        \E_k[\alpha_k \taumin \nabla f(x_k)^T(d_k - \dktrue)] &= \E_k[(\nu + \xi_k \theta \beta_k)  \taumin \nabla f(x_k)^T(d_k - \dktrue)] \\
        &= \E_k[\xi_k \theta \beta_k  \taumin \nabla f(x_k)^T(d_k - \dktrue)] \\
        &\leq \E_k[\theta \beta_k  \taumin \|\nabla f(x_k)\|\|d_k - \dktrue\|] \\
        &\leq \beta_k^2 \theta \taumin \kappa_g \zeta^{-1} \sqrt{M}.
    \end{align*}
    \qed
\end{proof}

Now, we are ready to derive our first main result.
\begin{theorem} \label{thm:fixedstep}
    Let Assumptions \ref{assum:fcsmooth}, \ref{assum:Hbound}, and \ref{assum:stochG} hold. Let $\sigma \in (0,1)$, let $\{\beta_k\} \subset \mathbb{R}_{>0}$ be a pre-specified sequence such that $\beta_k \leq \kappa_\beta$ holds for all $k$, let $\alpha_k \in [\nu, \nu + \theta \beta_k]$, for some $\theta \in \mathbb{R}_{>0}$, $\nu \in (0, \sigma/(2\kappa_v(\taumin L + \Gamma) + 4)]$, and let $\taumin$ be defined as in Lemma \ref{lem:taumin}. Let
    \begin{align} \label{eq:kappa1}
        \kappa_1 &:= \frac{(\nu + \theta \kappa_\beta)^2(\taumin L + \Gamma) (\rev{\kappa_u^2} + \zeta^{-2} M)}{2} \\
        &\quad+ \theta(\theta \kappa_c (\kappa_v(\taumin L + \Gamma) + 4) + \taumin \kappa_g \zeta^{-1} \sqrt{M}). \nonumber
    \end{align}
    Then, for any $K \in \N$,
    \begin{equation} \label{eq:thm1bound}
        \begin{split}
            \sum_{k=0}^{K-1} \E[\alpha_k \beta_k \taumin (\uktrue)^T H_k \uktrue + \frac{\alpha_k \sigma}{2} \|c_k\|_1] \\
            \leq \taumin (f(x_0) - \flow) + \|c_0\|_1 + \kappa_1 \sum_{k=0}^{K-1} \beta_k^2.
        \end{split}
    \end{equation}
\end{theorem}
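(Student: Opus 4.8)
The plan is to take the descent inequality from Lemma~\ref{lem:descentlemma}, apply the conditional expectation $\E_k[\cdot]$, and telescope over $k = 0, \dots, K-1$. First I would fix $k$ and examine each of the four terms on the right-hand side of \eqref{eq:descent}. For the first term, $-\alpha_k \Delta l(x_k, \taumin, \dktrue)$, I would use \eqref{eq:Dllb} to get $-\alpha_k \Delta l(x_k,\taumin,\dktrue) \le -\alpha_k \taumin \beta_k (\uktrue)^T H_k \uktrue - \alpha_k \sigma \|c_k\|_1$; note that $\alpha_k$, $\beta_k$, and $\uktrue$ are all $\Fcal_k$-measurable (the first only up to the random variable $\xi_k \in [0,1]$, but since all terms being bounded are sign-definite the $\xi_k$ dependence is harmless), so this passes cleanly through $\E_k$. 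For the fourth term, I would invoke Lemma~\ref{lem:innerprodfixed} directly, which bounds $\E_k[\alpha_k \taumin \nabla f(x_k)^T(d_k - \dktrue)] \le \beta_k^2 \theta \taumin \kappa_g \zeta^{-1}\sqrt{M}$.

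The two middle terms need the most care. For $\frac{\alpha_k^2 \beta_k^2}{2}(\taumin L + \Gamma)\|u_k\|^2$, I would bound $\alpha_k^2 \le (\nu + \theta\kappa_\beta)^2$ pointwise, then take $\E_k$ of $\|u_k\|^2$ using Lemma~\ref{lem:stochasticbounds}: $\E_k[\|u_k\|^2] \le \zeta^{-1}(\uktrue)^T H_k \uktrue + \zeta^{-2}M$. This produces a term proportional to $\beta_k^2 (\uktrue)^T H_k \uktrue$, which I would want to absorb — but here I would rather not fight that, and instead observe that it contributes $\beta_k^2 \cdot \frac{(\nu+\theta\kappa_\beta)^2}{2}(\taumin L+\Gamma)\zeta^{-1}(\uktrue)^T H_k\uktrue$; wait, that term is \emph{not} of the form $\beta_k^2 \cdot \text{const}$ since $(\uktrue)^T H_k \uktrue$ is unbounded in general. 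Looking more carefully at $\kappa_1$ in \eqref{eq:kappa1}, the author uses $\kappa_u^2$ there rather than $(\uktrue)^T H_k \uktrue$ — so the right move is to bound $\E_k[\|u_k\|^2] \le \zeta^{-1}\kappa_H\kappa_u^2 + \zeta^{-2}M$ by first writing $(\uktrue)^T H_k \uktrue \le \kappa_H \|\uktrue\|^2 \le \kappa_H \kappa_u^2$ via Assumption~\ref{assum:Hbound} and Lemma~\ref{lem:uktruebound}. That yields exactly the first line of $\kappa_1$ multiplied by $\beta_k^2$. For the third term $\frac{\alpha_k^2}{2}(\kappa_v(\taumin L + \Gamma) + 4)\|c_k\|_1$, I would split $\alpha_k^2 = \alpha_k \cdot \alpha_k \le \alpha_k(\nu + \theta\beta_k)$, so this term is $\le \frac{\alpha_k(\nu + \theta\beta_k)}{2}(\kappa_v(\taumin L+\Gamma)+4)\|c_k\|_1$. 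The $\nu$ piece combines with the $-\alpha_k\sigma\|c_k\|_1$ from the first term: by the hypothesis $\nu \le \sigma/(2\kappa_v(\taumin L+\Gamma)+4)$ we get $\frac{\alpha_k \nu}{2}(\kappa_v(\taumin L+\Gamma)+4) \le \frac{\alpha_k\sigma}{2}\|c_k\|_1$ coefficient-wise... more precisely $\nu(\kappa_v(\taumin L + \Gamma)+4) \le \sigma/2 \cdot \frac{\kappa_v(\taumin L+\Gamma)+4}{\kappa_v(\taumin L+\Gamma)+2}$ — I should just use the cruder $\nu \le \sigma/(2(\kappa_v(\taumin L+\Gamma)+4))$ reading, giving $\frac{\alpha_k\nu}{2}(\kappa_v(\taumin L+\Gamma)+4)\|c_k\|_1 \le \frac{\alpha_k\sigma}{4}\|c_k\|_1$, leaving at least $\frac{\alpha_k\sigma}{2}\|c_k\|_1$... hmm, the threshold in the theorem statement reads $\nu \in (0, \sigma/(2\kappa_v(\taumin L+\Gamma)+4)]$, which I'll interpret as $\sigma/(2(\kappa_v(\taumin L+\Gamma))+4)$ or $\sigma/(2(\kappa_v(\taumin L+\Gamma)+4))$; in either reading the point is that $-\alpha_k\sigma\|c_k\|_1 + \frac{\alpha_k\nu}{2}(\kappa_v(\taumin L+\Gamma)+4)\|c_k\|_1 \le -\frac{\alpha_k\sigma}{2}\|c_k\|_1$. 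The leftover $\theta\beta_k$ piece of $\alpha_k^2$ gives $\frac{\alpha_k\theta\beta_k}{2}(\kappa_v(\taumin L+\Gamma)+4)\|c_k\|_1 \le \theta^2\beta_k^2 \kappa_c(\kappa_v(\taumin L+\Gamma)+4)$ after bounding $\alpha_k \le \nu + \theta\kappa_\beta$... actually $\alpha_k \le \theta\beta_k$ won't hold, so I'd bound $\alpha_k\|c_k\|_1 \le (\nu+\theta\beta_k)\kappa_c$; the cross term $\nu\theta\beta_k$ vs $\theta^2\beta_k^2$ needs $\nu \le \theta\kappa_\beta$ or similar — I'll track this and if necessary use $\alpha_k\theta\beta_k \le \theta\beta_k\cdot\text{(something $\Fcal_k$-measurable)}$; in any case it lands inside $\theta \cdot \theta\kappa_c(\kappa_v(\taumin L+\Gamma)+4)\beta_k^2$ up to constants matching the second line of $\kappa_1$.

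Putting it together, after taking $\E_k$ I obtain
$$\E_k[\phi(x_{k+1},\taumin)] - \phi(x_k,\taumin) \le -\E_k\!\left[\alpha_k\beta_k\taumin(\uktrue)^TH_k\uktrue + \tfrac{\alpha_k\sigma}{2}\|c_k\|_1\right] + \kappa_1 \beta_k^2,$$
where I've folded all the $\beta_k^2$-proportional error terms into $\kappa_1$ as defined in \eqref{eq:kappa1}. Then I would take total expectations, sum over $k=0,\dots,K-1$, and telescope the left-hand side to $\E[\phi(x_K,\taumin)] - \phi(x_0,\taumin)$. Using $\phi(x_K,\taumin) = \taumin f(x_K) + \|c_K\|_1 \ge \taumin \flow$ (Assumption~\ref{assum:fcsmooth}) and $\phi(x_0,\taumin) = \taumin f(x_0) + \|c_0\|_1$, rearranging gives exactly \eqref{eq:thm1bound}.

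The main obstacle I anticipate is the bookkeeping around the $\alpha_k^2$ versus $\alpha_k$ factors on the constraint-violation term: the negative $-\alpha_k\sigma\|c_k\|_1$ term carries only one power of $\alpha_k$, so to have it dominate the positive contributions I must carefully split $\alpha_k^2 \le \alpha_k(\nu + \theta\kappa_\beta)$ (or $\alpha_k^2 = \alpha_k\cdot\alpha_k$ with one copy bounded by $\nu + \theta\beta_k$) rather than crudely bounding $\alpha_k^2 \le (\nu+\theta\kappa_\beta)^2$ everywhere, and then verify that the constant arithmetic closes under the stated condition on $\nu$. The term involving $\|u_k\|^2$ requires the deterministic bound $\|\uktrue\| \le \kappa_u$ (rather than keeping $(\uktrue)^T H_k\uktrue$) precisely so that its contribution is genuinely $O(\beta_k^2)$ and can be placed in $\kappa_1$; everything else is a routine application of Lemmas~\ref{lem:descentlemma}, \ref{lem:stochasticbounds}, \ref{lem:innerprodfixed} and \eqref{eq:Dllb}.
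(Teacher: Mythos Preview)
Your overall plan matches the paper's proof: take $\E_k$ of the descent lemma, invoke \eqref{eq:Dllb} and Lemmas~\ref{lem:uktruebound}, \ref{lem:stochasticbounds}, \ref{lem:innerprodfixed}, then telescope and use $\phi(x_K,\taumin)\ge\taumin\flow$. Your treatment of the $\|u_k\|^2$ term---bounding $(\uktrue)^TH_k\uktrue \le \kappa_H\kappa_u^2$ so that the contribution becomes a genuine $O(\beta_k^2)$ constant---is exactly what the paper does.

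The gap is in the constraint-violation term, precisely where you flag the ``main obstacle.'' Your split $\alpha_k^2 \le \alpha_k(\nu + \theta\beta_k)$ leaves a residual piece
\[
\tfrac{\alpha_k\theta\beta_k}{2}\bigl(\kappa_v(\taumin L+\Gamma)+4\bigr)\|c_k\|_1
\]
that is $O(\beta_k)$, not $O(\beta_k^2)$, because $\alpha_k \ge \nu$ is bounded below by a positive constant independent of $k$. This piece cannot be absorbed into $\kappa_1\beta_k^2$, and it can only be merged with the remaining slack in $-\alpha_k\sigma\|c_k\|_1$ under an extra smallness condition on $\beta_k$ (roughly $\theta\beta_k(\kappa_v(\taumin L+\Gamma)+4)\le \sigma/2$) that the theorem does not assume. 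The paper avoids this by using the elementary bound
\[
\tfrac{\alpha_k^2}{2} \le \nu^2 + \theta^2\beta_k^2
\]
(from $(a+b)^2 \le 2a^2+2b^2$), which cleanly splits the term: the $\nu^2$ part satisfies $\nu^2(\kappa_v(\taumin L+\Gamma)+4)\|c_k\|_1 \le \tfrac{\alpha_k\sigma}{2}\|c_k\|_1$ via the hypothesis on $\nu$ together with $\nu \le \alpha_k$, and the $\theta^2\beta_k^2$ part is bounded by $\theta^2\beta_k^2\kappa_c(\kappa_v(\taumin L+\Gamma)+4)$ using $\|c_k\|_1 \le \kappa_c$, which is exactly the corresponding piece of $\kappa_1$. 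With this one change to your splitting, the rest of your outline goes through verbatim and yields \eqref{eq:thm1bound} with the stated $\kappa_1$.
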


\begin{proof}
    Taking the conditional expectation on both sides of \eqref{eq:descent} and applying the results of Lemma \ref{lem:uktruebound}, Lemma \rev{\ref{lem:stochasticdkbound}}, and Lemma \ref{lem:innerprodfixed} (noting that $\beta_k$ is measurable to $\mathcal{F}_k$),
    \begin{align*}
        &\E_k[\phi(x_k + \alpha_k d_k, \taumin)] - \phi(x_k, \taumin) \\
        &\quad\leq -\E_k[\alpha_k \Delta l(x_k, \taumin, \nabla f(x_k), \dktrue)] + \E_k\left[\frac{\alpha_k^2 \beta_k^2}{2}(\taumin L + \Gamma) \|u_k\|^2\right] \\
        &\quad\quad+ \E_k\left[\frac{\alpha_k^2}{2} (\kappa_v(\taumin L + \Gamma) + 4)  \|c_k\|_1\right] + \E_k[\alpha_k \taumin \nabla f(x_k)^T (d_k - \dktrue)] \\
        &\quad\leq -\alpha_k \Delta l(x_k, \taumin, \nabla f(x_k), \dktrue) + (\nu^2 + \theta^2 \beta_k^2) (\kappa_v(\taumin L + \Gamma) + 4)  \|c_k\|_1 \\
        &\quad\quad+ \frac{\alpha_k^2 \beta_k^2}{2}(\taumin L + \Gamma) (\rev{\|\uktrue\|^2} + \zeta^{-2} M) + \theta \beta_k^2 \taumin \kappa_g \zeta^{-1} \sqrt{M} \\
        &\quad\leq -\alpha_k \Delta l(x_k, \taumin, \nabla f(x_k), \dktrue) + \frac{\alpha_k^2 \beta_k^2}{2}(\taumin L + \Gamma) (\rev{\kappa_u^2} + \zeta^{-2} M) \\
        &\quad \ + \frac{\alpha_k \sigma}{2} \|c_k\|_1 + \beta_k^2 \theta(\theta \kappa_c (\kappa_v(\taumin L + \Gamma) + 4) + \taumin \kappa_g \zeta^{-1} \sqrt{M}) \\
        &\quad= -\alpha_k \Delta l(x_k, \taumin, \nabla f(x_k), \dktrue) + \frac{\alpha_k \sigma}{2} \|c_k\|_1 + \beta_k^2 \kappa_1,
    \end{align*}
    where the final inequality follows by $\nu \leq \alpha_k$.
    
    Now, by \eqref{eq:Dllb},
    \begin{align*}
        &\E_k[\phi(x_k + \alpha_k d_k, \taumin)] - \phi(x_k, \taumin) \\
        &\quad \leq -\alpha_k \Delta l(x_k, \taumin, \nabla f(x_k), \dktrue) + \frac{\alpha_k \sigma}{2} \|c_k\|_1 + \beta_k^2 \kappa_1 \\
        &\quad \leq -\alpha_k (\beta_k \taumin  (\uktrue)^T H_k \uktrue + \sigma \|c_k\|_1) + \frac{\alpha_k \sigma}{2} \|c_k\|_1 + \beta_k^2 \kappa_1 \\
        &\quad = -\alpha_k \beta_k \taumin (\uktrue)^T H_k \uktrue - \frac{\alpha_k \sigma}{2} \|c_k\|_1 + \beta_k^2 \kappa_1.
    \end{align*}
    Taking the total expectation of this inequality, rearranging and summing from $k = 0, \dots, K-1$,
    \begin{equation*}
        \begin{split}
        \sum_{k=0}^{K-1} \E[\alpha_k \beta_k \taumin (\uktrue)^T H_k \uktrue + \frac{\alpha_k \sigma}{2} \|c_k\|_1] \\
        \leq \phi(x_0, \taumin) - \E[\phi(x_K, \taumin)] + \kappa_1 \sum_{k=0}^{K-1} \beta_k^2.
        \end{split}
    \end{equation*}
    Due to Assumption \ref{assum:fcsmooth}, we have,
    \begin{equation*}
        -\E[\phi(x_K,\taumin)] = -\E[\taumin f(x_K) + \|c_K\|_1] \leq -\taumin \flow,
    \end{equation*}
    so that
    \begin{equation*}
        \begin{split}
        \sum_{k=0}^{K-1} \E[\alpha_k \beta_k \taumin  (\uktrue)^T H_k \uktrue + \frac{\alpha_k \sigma}{2} \|c_k\|_1] \\
        \leq \phi(x_0, \taumin) - \taumin \flow + \kappa_1 \sum_{k=0}^{K-1} \beta_k^2,
        \end{split}
    \end{equation*}
    which proves the result. \qed
\end{proof}

\rev{Next, we establish some convergence results under different choices of $\beta_k$.}

\rev{
\begin{corollary} \label{cor:convergence}
    Let the assumptions of Theorem \ref{thm:fixedstep} hold.
    Then, if $\beta_k = \beta > 0$ for all $K \in \mathbb{N}$,
    \begin{equation} \label{eq:cconvergencefixed}
        \frac{1}{K} \sum_{k=0}^{K-1} \E[\|c_k\|_1] \leq \frac{2( \taumin (f(x_0) - \flow) + \|c_0\|_1)}{\nu \sigma K} + \frac{\beta^2 \kappa_1}{\nu \sigma} \xrightarrow{K \rightarrow \infty} \frac{\beta^2 \kappa_1}{\nu \sigma},
    \end{equation}
    where $\kappa_1$ is defined in \eqref{eq:kappa1}, and
    \begin{align} 
        &\frac{1}{K} \sum_{k=0}^{K-1} \E[\|\nabla f(x_k) + J_k^T \yktrue\|^2] \leq \frac{\kappa_H^2(\taumin (f(x_0) - \flow) + \|c_0\|_1)}{\beta \nu \eta \taumin \zeta K} + \frac{\kappa_H^2 \kappa_1 \beta}{\nu \eta \taumin \zeta} \nonumber \\
        &\quad+ \frac{2 (1 +2 \kappa_u) \kappa_v \kappa_H^2 (\taumin(f(x_0) - \flow) + \|c_0\|_1)}{\nu \sigma K} +  \frac{2 (1 +2 \kappa_u) \kappa_v \kappa_H^2 \beta^2 \kappa_1}{\nu \sigma} \nonumber  \\ & \quad \quad\xrightarrow{K \rightarrow \infty} \frac{\kappa_H^2 \kappa_1 \beta}{\nu \eta \taumin \zeta} + \frac{2 (1 +2 \kappa_u) \kappa_v \kappa_H^2 \beta^2 \kappa_1}{\nu \sigma}. \label{eq:gconvergencefixed}
    \end{align}
    If $\sum_{k=0}^{\infty} \beta_k^2 < \infty$, then,
    \begin{equation} \label{eq:limcconvergence}
        \underset{k \rightarrow \infty}{\lim} \E[\|c_k\|_1] = 0.
    \end{equation}
    If, in addition, $\sum_{k=0}^{\infty} \beta_k = \infty$, then,
    \begin{equation}
        \underset{k \rightarrow \infty}{\liminf} \ \E[\|\nabla f(x_k) + J_k^T \yktrue\|^2] = 0.
    \end{equation}
\end{corollary}
}
\begin{proof}

    By Theorem \ref{thm:fixedstep}, the definition of $\beta_k$, and $\nu \leq \alpha_k$, it follows that
    \begin{equation} 
        \sum_{k=0}^{K-1} \E[\|c_k\|_1] \leq \frac{2(\taumin (f(x_0) - \flow) + \|c_0\|_1)}{\nu \sigma} + \frac{2 K \kappa_1 \beta^2}{\nu \sigma} \label{eq:cboundcorcon}
    \end{equation}
    Dividing both sides of this inequality by $K$ yields the first result.

    Now, by Theorem \ref{thm:fixedstep} and Lemma \ref{lem:ulb} as well as $\alpha_k \geq \nu$, we have
    \begin{align}
        &\sum_{k=0}^{K-1} \E[\nu \beta_k \taumin \zeta \kappa_H^{-2} \|\nabla f(x_k) + J_k^T \yktrue\|^2 + \frac{\nu \sigma}{2} \|c_k\|_1  \nonumber \\
        &\quad \ - \nu \taumin \beta_k (1 +2 \kappa_u) \zeta \kappa_v \|c_k\|_1]  \nonumber \\
        &\quad\leq \sum_{k=0}^{K-1} \E[\alpha_k \beta_k (\uktrue)^T H_k (\uktrue) + \frac{\alpha_k \sigma}{2} \|c_k\|_1] \nonumber \\
        &\quad\leq \taumin (f(x_0) - \flow) + \|c_0\|_1 + \kappa_1 \sum_{k=0}^{K-1} \beta_k^2. \label{eq:corconvineq1}
    \end{align}
    Rearranging this inequality and using $\beta_k = \beta$,
    \begin{align*}
        \sum_{k=0}^{K-1} \E[\|\nabla f(x_k) + J_k^T \yktrue\|^2] &\leq \frac{\kappa_H^2(\taumin (f(x_0) - \flow) + \|c_0\|_1)}{\beta \nu \eta \taumin \zeta} + \frac{\kappa_H^2 \kappa_1 K \beta}{\nu \eta \taumin \zeta} \\
        &\quad+ (1 +2 \kappa_u) \kappa_v \kappa_H^2\sum_{k=0}^{K-1} \E[\|c_k\|_1].
    \end{align*}
    Dividing through by $K$ and applying \eqref{eq:cconvergencefixed}, it follows that
    \begin{align*}
        \frac{1}{K} \sum_{k=0}^{K-1} \E[\|\nabla f(x_k) + J_k^T \yktrue\|^2] &\leq \frac{\kappa_H^2(\taumin (f(x_0) - \flow) + \|c_0\|_1)}{\beta \nu \eta \taumin \zeta K} \\
        &\quad+ \frac{2 (1 +2 \kappa_u) \kappa_v \kappa_H^2 (\taumin(f(x_0) - \flow) + \|c_0\|_1)}{\nu \sigma K} \\
        &\quad+ \frac{\kappa_H^2 \kappa_1 \beta}{\nu \eta \taumin \zeta} + \frac{2 (1 +2 \kappa_u) \kappa_v \kappa_H^2 \beta^2 \kappa_1}{\nu \sigma}.
    \end{align*}
    which proves the second result.
    
    By Theorem \ref{thm:fixedstep}, and $\nu \leq \alpha_k$, it follows that 
    \begin{equation} 
        \sum_{k=0}^{\infty} \E[\|c_k\|_1] \leq \frac{2(\taumin (f(x_0) - \flow) + \|c_0\|_1)}{\nu \sigma} + \frac{2  \kappa_1 \sum_{k=0}^\infty\beta_k^2}{\nu \sigma}. \label{eq:cboundcorcon2}
    \end{equation}
    By assumption, $\sum_{k=0}^\infty \beta_k^2 < \infty$, so the series converges, which directly implies the desired result.

    For the final result, by \eqref{eq:corconvineq1} and $\beta_k \leq \kappa_\beta$ for all $k$, we have
    \begin{align*}
        &\sum_{k=0}^{\infty} \E[\nu \beta_k \taumin \zeta \kappa_H^{-2} \|\nabla f(x_k) + J_k^T \yktrue\|^2]  \nonumber \\
        &\quad\leq \taumin (f(x_0) - \flow) + \|c_0\|_1 + \kappa_1 \sum_{k=0}^{\infty} \beta_k^2 + \nu \taumin (1 +2 \kappa_u) \zeta \kappa_v \kappa_\beta \sum_{k=0}^{\infty} \E[\|c_k\|_1]. 
    \end{align*}
    By \eqref{eq:cboundcorcon2} and $\sum_{k=0}^\infty \beta_k^2 < \infty$, it follows that
    \begin{equation*}
        \sum_{k=0}^{\infty} \E[\beta_k \|\nabla f(x_k) + J_k^T \yktrue\|^2] = 0.
    \end{equation*}
    Recalling that $\sum_{k=0}^\infty \beta_k = \infty$, the desired result follows directly. \qed

\end{proof}

\rev{When comparing these results to those of \cite[Corollary 3.14]{ASBerahas_FECurtis_DPRobinson_BZhou_2021}, we obtain the same radius of convergence for fixed $\beta$ with respect to the gradient of the Lagrangian, which is proportional to $\beta$. In addition, we establish the same limit inferior result for this stationarity measure with a decaying $\beta_k$ sequence. However, our results differ significantly with respect to the constraint violation. In particular, we establish a tighter radius of convergence, proportional to $\beta^2$, for fixed $\beta$ as well as convergence in the limit when $\beta_k$ is a decaying sequence. These properties suggest superior convergence in the constraint violation, due to the two stepsize scheme we employ.}

\rev{Now, we present our main complexity result of this section.}
\begin{corollary} \label{cor:complexityfixed}
    For any $K \in \mathbb{N}_{>0}$, let $\beta_k := \beta = \eta/\sqrt{K}$ for all $k \in [0,K-1]$ where $\eta \in \mathbb{R}_{>0}$, let $\kappa_1$ be defined in \eqref{eq:kappa1} and let
    \begin{equation} \label{eq:kappa2}
        \kappa_2 := \taumin (f(x_0) - \flow) + \|c_0\|_1 + \eta^2 \kappa_1.
    \end{equation}
    Then, under the conditions of Theorem \ref{thm:fixedstep}, we have
    \begin{equation} \label{eq:ccomplexityfixed}
        \frac{1}{K} \sum_{k=0}^{K-1} \E[\|c_k\|_1] \leq \frac{2 \kappa_2}{\nu \sigma K},
    \end{equation}
        and
    \begin{equation} \label{eq:gcomplexityfixed}
        \frac{1}{K}\sum_{k=0}^{K-1} \E[\|\nabla f(x_k) + J_k^T \yktrue\|^2] \leq \frac{\kappa_H^2\kappa_2}{\taumin \zeta \nu \eta\sqrt{K}} + \frac{2\zeta(1 +2 \kappa_u) \kappa_v \kappa_H^2\kappa_2}{\nu \sigma K}. 
    \end{equation}
    Finally, with probability at least $1-\delta$,
    \begin{equation} \label{eq:complexityfixedmarkov}
    \begin{split}
         \underset{k \in [0,K-1]}{\min} \ \taumin \zeta \kappa_H^{-2} \|\nabla f(x_k) + J_k^T \yktrue\|^2 + \frac{\sigma \sqrt{K}}{2 \eta} \|c_k\|_1 \\ \leq  \frac{\kappa_2}{\nu \eta \delta \sqrt{K}} + \frac{2(1 +2 \kappa_u) \zeta \taumin \kappa_v\kappa_2}{\sigma \delta K}.
    \end{split}
    \end{equation}

\end{corollary}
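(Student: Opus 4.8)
The plan is to derive Corollary~\ref{cor:complexityfixed} directly from the master inequality~\eqref{eq:thm1bound} in Theorem~\ref{thm:fixedstep} by substituting the constant choice $\beta_k \equiv \beta = \eta/\sqrt{K}$, controlling the right-hand side, and then extracting each of the three conclusions by dropping terms appropriately. With $\beta_k = \eta/\sqrt{K}$ we have $\sum_{k=0}^{K-1}\beta_k^2 = \eta^2$, so the right-hand side of~\eqref{eq:thm1bound} collapses to exactly $\kappa_2$ as defined in~\eqref{eq:kappa2}. The left-hand side is a sum of two nonnegative terms (nonnegativity of $(\uktrue)^T H_k \uktrue$ comes from Assumption~\ref{assum:Hbound}, and $\|c_k\|_1 \ge 0$ trivially), so we may bound each piece of the sum in isolation by $\kappa_2$.

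\textbf{Constraint violation bound~\eqref{eq:ccomplexityfixed}.} First I would drop the $(\uktrue)^T H_k \uktrue$ term from the left side of~\eqref{eq:thm1bound} and use $\alpha_k \ge \nu$ to get $\sum_{k=0}^{K-1} \tfrac{\nu\sigma}{2}\E[\|c_k\|_1] \le \kappa_2$; dividing by $\tfrac{\nu\sigma K}{2}$ yields~\eqref{eq:ccomplexityfixed} immediately.

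\textbf{Lagrangian gradient bound~\eqref{eq:gcomplexityfixed}.} Here I would instead drop the $\|c_k\|_1$ term from the left side of~\eqref{eq:thm1bound}, use $\alpha_k \ge \nu$ and $\beta_k = \eta/\sqrt{K}$, and apply Lemma~\ref{lem:ulb} to replace $(\uktrue)^T H_k \uktrue$ by $\zeta\kappa_H^{-2}\|\nabla f(x_k)+J_k^T\yktrue\|^2 - (1+2\kappa_u)\zeta\kappa_v\|c_k\|_1$. This gives
\[
\frac{\nu\eta\taumin\zeta}{\kappa_H^2\sqrt{K}}\sum_{k=0}^{K-1}\E[\|\nabla f(x_k)+J_k^T\yktrue\|^2] \le \kappa_2 + \frac{\nu\eta\taumin(1+2\kappa_u)\zeta\kappa_v}{\sqrt{K}}\sum_{k=0}^{K-1}\E[\|c_k\|_1],
\]
and then I would substitute the already-proved bound~\eqref{eq:ccomplexityfixed}, namely $\sum_{k=0}^{K-1}\E[\|c_k\|_1] \le 2\kappa_2/(\nu\sigma)$, into the last term. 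Dividing through by $\tfrac{\nu\eta\taumin\zeta K}{\kappa_H^2\sqrt{K}}$ and simplifying the constants should produce~\eqref{eq:gcomplexityfixed}; the bookkeeping on the constant in the second term is the only fiddly part, but it is routine.

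\textbf{High-probability bound~\eqref{eq:complexityfixedmarkov}.} For the last claim I would combine both terms on the left of~\eqref{eq:thm1bound} but first use Lemma~\ref{lem:ulb} on the quadratic term, so that after using $\alpha_k \ge \nu$ and $\beta_k = \eta/\sqrt{K}$ the left side becomes (up to the absorbed $-(1+2\kappa_u)\zeta\kappa_v\|c_k\|_1$ correction, again handled via~\eqref{eq:ccomplexityfixed}) a lower bound on $\sum_k \E[\nu\eta K^{-1/2}(\taumin\zeta\kappa_H^{-2}\|\nabla f(x_k)+J_k^T\yktrue\|^2 + \tfrac{\sigma\sqrt K}{2\eta}\|c_k\|_1)]$. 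Pulling out the deterministic factor $\nu\eta/\sqrt K$, dividing by $K$, and recognizing that an average of nonnegative terms bounds the minimum, I get a bound on $\E[\min_k(\cdots)]$ of order $\kappa_2/(\nu\eta\sqrt K)$ plus a lower-order $1/K$ term. Applying Markov's inequality to the nonnegative random variable $\min_{k\in[0,K-1]}(\taumin\zeta\kappa_H^{-2}\|\nabla f(x_k)+J_k^T\yktrue\|^2 + \tfrac{\sigma\sqrt K}{2\eta}\|c_k\|_1)$ then gives the stated bound with the extra $1/\delta$ factor, holding with probability at least $1-\delta$. \textbf{The main obstacle} I anticipate is not conceptual but arithmetic: keeping the constants straight when Lemma~\ref{lem:ulb} is applied and the $\|c_k\|_1$ correction is reabsorbed via~\eqref{eq:ccomplexityfixed}, so that the final constants match exactly what is written in~\eqref{eq:gcomplexityfixed} and~\eqref{eq:complexityfixedmarkov}; one has to be a little careful about which of $\nu$, $\eta$, $\taumin$, $\zeta$, $\kappa_H$ land in numerator versus denominator after the divisions.
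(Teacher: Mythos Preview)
Your proposal is correct and follows essentially the same approach as the paper: specialize~\eqref{eq:thm1bound} with $\beta_k=\eta/\sqrt{K}$ so the right-hand side becomes $\kappa_2$, then (i) drop the quadratic term and use $\alpha_k\ge\nu$ for~\eqref{eq:ccomplexityfixed}, (ii) apply Lemma~\ref{lem:ulb} and feed~\eqref{eq:ccomplexityfixed} back in for~\eqref{eq:gcomplexityfixed}, and (iii) bound the average of the combined quantity, pass to the minimum, and invoke Markov for~\eqref{eq:complexityfixedmarkov}. Your remark that the only delicate part is the constant bookkeeping is accurate and matches what the paper does.
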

\begin{proof}
    \rev{
    The first two results follow directly from Corollary \ref{cor:convergence} with this specific choice of $\beta$.}
    
    To prove the final result, by \eqref{eq:corconvineq1},
    \begin{align*}
        &\sum_{k=0}^{K-1} \E[\nu \beta_k \taumin \zeta \kappa_H^{-2} \|\nabla f(x_k) + J_k^T \yktrue\|^2 + \frac{\nu \sigma}{2} \|c_k\|_1]  \nonumber \\
        &\quad\leq \taumin (f(x_0) - \flow) + \|c_0\|_1 + \kappa_1 \sum_{k=0}^{K-1} \beta_k^2 \\
        &\quad+ \sum_{k=0}^{K-1} \E[\nu \beta_k \taumin (1 +2 \kappa_u) \zeta \kappa_v \|c_k\|_1].
    \end{align*}
    Applying the definition of $\beta$, multiplying through by $\frac{1}{\nu \beta K}$, and using \eqref{eq:ccomplexityfixed},
    \begin{equation*}
    \begin{split}
        \frac{1}{K} \rev{\sum_{k=0}^{K-1}}\E[\taumin \zeta \kappa_H^{-2} \|\nabla f(x_k) + J_k^T \yktrue\|^2 + \frac{\sigma \sqrt{K}}{2\eta} \|c_k\|_1] \\ \leq \frac{\kappa_2}{\nu \eta \sqrt{K}} + \frac{2(1 +2 \kappa_u) \zeta \taumin \kappa_v\kappa_2}{\sigma K}
    \end{split}
    \end{equation*}
    and thus
    \begin{equation*}
        \begin{split}
        \underset{k \in [0,K-1]}{\min} \ \E[\taumin \zeta \kappa_H^{-2} \|\nabla f(x_k) + J_k^T \yktrue\|^2 + \frac{\sigma \sqrt{K}}{2\eta} \|c_k\|_1] \\ \leq \frac{\kappa_2}{\nu \sqrt{K}} + \frac{2(1 +2 \kappa_u) \zeta \taumin \kappa_v\kappa_2}{\nu \sigma K}.
        \end{split}
    \end{equation*}
    Applying Markov's inequality \rev{and Jensen's inequality}, it follows that with probability at least $1-\delta$ that
    \begin{equation*}
        \begin{split}
        \underset{k \in [0,K-1]}{\min} \ \taumin \zeta \kappa_H^{-2} \|\nabla f(x_k) + J_k^T \yktrue\|^2 + \frac{\sigma \sqrt{K}}{2 \eta} \|c_k\|_1 \\
        \leq \frac{\kappa_2}{\nu\delta \sqrt{K}} + \frac{2(1 +2 \kappa_u) \zeta \taumin \kappa_v\kappa_2}{\sigma \delta K},
        \end{split}
    \end{equation*}
    which proves the final result.
    \qed
\end{proof}

From the result of Corollary \ref{cor:complexityfixed}, we can easily derive our worst-case complexity results, as promised in Section \ref{sec:intro}. It should be clear that in terms of the constraint violation, by \eqref{eq:ccomplexityfixed}, the maximum number of iterations until $\E[\|c_k\|_1]$ falls below $\epsilon_c$ is at most $\mathcal{O}(\epsilon_c^{-1})$. Similarly, by Jensen's inequality and \eqref{eq:gcomplexityfixed}, the maximum number of iterations until $\E[\|\nabla f(x_k) + J_k^T\yktrue\|] \leq \epsilon_\ell$ is $\mathcal{O}(\epsilon_\ell^{-4})$. Finally, if one is interested in a combined result, we obtain the same $\mathcal{O}(K^{-1/2})$ convergence rate as \cite{FECurtis_MJONeill_DPRobinson_2024}, however, our convergence is in terms of a much stronger measure with respect to the constraint violation $\|c_k\|_1$, which is scaled by an additional factor of $\sqrt{K}$. Thus, we expect much faster convergence with respect to the constraint violation than the algorithm in \cite{FECurtis_MJONeill_DPRobinson_2024} without harming the convergence rate in terms of the gradient of the Lagrangian.

\subsection{Convergence with Adaptive Stepsizes} \label{subsec:adaptive}

Now, we analyze the case where $\beta_k$ and $\alpha_k$ are set adaptively, in a manner inspired by Adagrad-Norm \cite{RWard_XWu_LBottou_2020}. Specifically, at each iteration $k$, let
\begin{equation} \label{eq:adabq}
    b_k^2 = b_{k-1}^2 + \|u_k\|^2, \quad q_k^2 = q_{k-1}^2 + \|c_k\|_1,
\end{equation}
and
\begin{equation} \label{eq:adastep}
    \beta_k = \frac{\eta}{b_k}, \quad \quad \alpha_k \in \left[\frac{\nu}{q_k}, \frac{\nu}{q_k} + \theta \min\left\{\frac{1}{b_k}, \frac{1}{q_k}\right\}\right],
\end{equation}
for some constants $\eta > 0$ and $\nu > 0$. We note here that the additional term at the upper end of the range for $\alpha_k$ is due to our adaptive setting of $\beta_k$ using $b_k$, which is sufficient to control the stochasticity in $g_k$, but may be insufficient to control second order terms involving the constraint violation. We remedy this situation via the inclusion of the $\theta/q_k$ term. In addition, we remark that $q_k$ can be set in many different ways, such as using $\|v_k\|^2$ or $\|c_k\|_2$ in place of $\|c_k\|_1$. \rev{In principle, one simply needs to choose quantities which are upper bounds on $\|v_k\|^2$ that are computable during the course of the algorithm, which is a relatively flexible condition given Lemma \ref{lem:vbound} and Assumption \ref{assum:fcsmooth}.} These other strategies may lead to longer stepsizes, which could have important practical implications, however, we choose to use $\|c_k\|_1$ as it obtains the best constant factors in the convergence analysis among the relevant choices. \rev{Indeed, one can actually take the minimum over multiple quantities and the subsequent analysis follows with only minor changes; in all of our numerical experiments, we use $q_k^2 = q_{k-1}^k + \min\{\|c_k\|_1, \|v_k\|, \|v_k\|^2\}$ which significantly improves the numerical results.} \rev{We wish to highlight in the case where $c_k = 0$ for all $k$ and $\theta = 0$, then $\alpha_k$ remains fixed at $\nu/q_{-1}$ for all $k$. In addition, if $H_k = I$, then $u_k = P_k g_k$, where $P_k$ is the orthogonal projection matrix onto $\text{Null}(J_k)$, so that the step $d_k$ is equivalent to the one generated by applying Adagrad-Norm \cite{RWard_XWu_LBottou_2020} directly to the projected gradient mapping.}

Throughout this section, since $\beta_k$ is dependent on $g_k$, we redefine $\dktrue$ as
\begin{equation} \label{eq:dktrueada}
    \dktrue := v_k + \beta_{k-1} \uktrue,
\end{equation}
so that it remains measurable to $\mathcal{F}_k$. We note that under this re-definition, the results of Lemmas \ref{lem:taumin} and \ref{lem:descentlemma} still hold.

Our subsequent analysis relies on the following lemma, which we give without proof as it is a well-known result in the adaptive gradient literature (see for example, \cite[Lemma 10]{BWang_HZhang_ZMa_WChen_2023}).
\begin{lemma} \label{lem:logbound}
    Let $\{a_i\}_{i=0}^\infty$ be a series of non-negative real numbers with $a_0 \in \R_{>0}$. Then,
    \begin{equation} \label{eq:logboundlem}
        \sum_{k=1}^T \frac{a_k}{\sum_{i=0}^k a_i} \leq \log \left(\sum_{k=0}^T a_k\right) - \log(a_0)
    \end{equation}
\end{lemma}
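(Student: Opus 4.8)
The plan is to prove the inequality by a term-by-term telescoping argument, combined with the elementary bound $1 - x \le -\log(x)$ valid for all $x > 0$ (equivalently $\log(1/x) \ge 1 - x$, which follows from concavity of $\log$ at the point $1$). First I would fix $T$ and write $S_k := \sum_{i=0}^k a_i$ for $k = 0,1,\dots,T$, noting that $S_0 = a_0 > 0$ and that the sequence $\{S_k\}$ is nondecreasing, hence $S_k > 0$ for every $k$, so all the quantities in the statement are well-defined (no division by zero). Then for each $k \in \{1,\dots,T\}$ I observe that
\begin{equation*}
  \frac{a_k}{S_k} = \frac{S_k - S_{k-1}}{S_k} = 1 - \frac{S_{k-1}}{S_k}.
\end{equation*}
Applying the inequality $1 - x \le -\log x$ with $x = S_{k-1}/S_k > 0$ gives
\begin{equation*}
  \frac{a_k}{S_k} \le -\log\!\left(\frac{S_{k-1}}{S_k}\right) = \log(S_k) - \log(S_{k-1}).
\end{equation*}

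Next I would sum this over $k = 1,\dots,T$. The right-hand side telescopes to $\log(S_T) - \log(S_0)$, which, upon substituting $S_T = \sum_{k=0}^T a_k$ and $S_0 = a_0$, is exactly $\log\!\big(\sum_{k=0}^T a_k\big) - \log(a_0)$, yielding \eqref{eq:logboundlem}. One minor point to handle carefully: if some $a_k = 0$ for $k \ge 1$, then the corresponding term on the left is $0$ and the term on the right is $\log(S_k) - \log(S_{k-1}) = 0$ as well (since $S_k = S_{k-1}$), so the bound holds trivially for those indices; the argument above already covers this since $x = 1$ there and $1 - 1 = -\log 1 = 0$.

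I do not anticipate any real obstacle here — the only thing requiring minor care is ensuring positivity of all partial sums (guaranteed by $a_0 > 0$ and nonnegativity of the $a_i$) so that the logarithms and the division are legitimate, and invoking the correct elementary inequality $1 - x \le -\log x$ in the right direction. Since the paper already flags this as a well-known result and supplies a reference (\cite[Lemma 10]{BWang_HZhang_ZMa_WChen_2023}), a short self-contained argument along these lines suffices, and in fact the statement in the excerpt says it is given without proof, so no further work is strictly needed beyond this sketch.
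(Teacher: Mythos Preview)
Your proof is correct and is the standard telescoping argument for this inequality. The paper itself gives no proof at all---it explicitly states the lemma without proof and cites \cite[Lemma~10]{BWang_HZhang_ZMa_WChen_2023}---so your self-contained argument goes beyond what the paper provides while matching the well-known approach that reference would contain.
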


In order to prove convergence of our algorithm, the key issue posed by the adaptive stepsizes is the final term in \eqref{eq:descent}, which requires a more detailed analysis than in Lemma \ref{lem:innerprodfixed} as $\beta_k$ is no longer measurable to $\mathcal{F}_k$ and $\dktrue$ has been redefined in \eqref{eq:dktrueada}. We give a bound on this term in the following lemma.
\begin{lemma} \label{lem:innerprodada}
    Let Assumptions \ref{assum:fcsmooth}, \ref{assum:Hbound}, and \ref{assum:stochG} hold and let
    \begin{equation} \label{eq:kappa3}
        \kappa_3 := \rev{\kappa_u^2 + \zeta^{-2} M}
    \end{equation}
    and
    \begin{equation} \label{eq:kappa4}
        \kappa_4 := \max\left\{\zeta^{-1} \kappa_H^2, \beta_{-1}(1+2 \kappa_u)\kappa_H^2 \kappa_v \taumin/\sigma\right\}
    \end{equation}
    Then,
    \begin{align}
        &\E_k\left[\alpha_k \nabla f(x_k)^T (d_k - \dktrue)\right] \nonumber \leq \E_k \Bigg[\frac{\alpha_k \beta_{k-1}}{2} (\uktrue)^T H_k \uktrue + \frac{\alpha_k \sigma}{2 \taumin} \|c_k\|_1 \nonumber \\
        &\quad+ \left(\frac{3 \eta^2 \kappa_3 \kappa_4 (\nu + \theta)^2}{2 q_{-1} b_{-1}} + \frac{3 \kappa_4 \theta^2 (\eta^2 + \beta_{-1}^2 \kappa_3)}{2 \eta \nu} + \frac{3 \rev{\zeta^{-2}} M \kappa_4 \theta^2 \beta_{-1}}{2 q_{-1}}\right)\frac{\|u_k\|^2}{b_k^2}\Bigg].\label{eq:innerprodadaresult}
    \end{align}
\end{lemma}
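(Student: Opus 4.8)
The plan is to decompose $\alpha_k$ according to its defining range and estimate the resulting pieces separately. Write $\alpha_k = \tfrac{\nu}{q_k} + \xi_k r_k$ where $\xi_k \in [0,1]$ is measurable given the relevant information and $r_k = \min\{\theta/b_k, \theta/q_k\}$. Since $q_k$ is measurable to $\mathcal{F}_k$ (it depends on $\|c_k\|_1$, which is deterministic given $x_k$), but $b_k$ and hence $r_k$ and $\xi_k$ are \emph{not} $\mathcal{F}_k$-measurable, we cannot simply pull $\alpha_k$ out of the conditional expectation. Instead, for the $\tfrac{\nu}{q_k}$ part, note $d_k - \dktrue = \beta_{k-1}(u_k - \uktrue) = \beta_{k-1} Z_k (Z_k^\top H_k Z_k)^{-1} Z_k^\top(\nabla f(x_k) - g_k)$ — using the \emph{redefinition} \eqref{eq:dktrueada} so that the coefficient is the $\mathcal{F}_k$-measurable $\beta_{k-1}$, not $\beta_k$ — and $\tfrac{\nu}{q_k}$ is $\mathcal{F}_k$-measurable, so $\E_k[\tfrac{\nu}{q_k}\nabla f(x_k)^\top(d_k - \dktrue)] = 0$ by $\E_k[g_k] = \nabla f(x_k)$ from Assumption \ref{assum:stochG}. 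This is the crucial payoff of redefining $\dktrue$.

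The work then concentrates on the term $\E_k[\xi_k r_k \nabla f(x_k)^\top(d_k-\dktrue)]$, which I would bound by $\E_k[r_k \|\nabla f(x_k)\|\,\|d_k - \dktrue\|] = \E_k[r_k \beta_{k-1}\|\nabla f(x_k)\|\,\|u_k - \uktrue\|]$. The goal is to produce, on the right-hand side, the three ``good'' terms $\tfrac{\alpha_k\beta_{k-1}}{2}(\uktrue)^\top H_k\uktrue$, $\tfrac{\alpha_k\sigma}{2\taumin}\|c_k\|_1$, and a multiple of $\|u_k\|^2/b_k^2$. I would use Young's inequality repeatedly. One natural split is to bound $\|\nabla f(x_k)\|\,\|u_k - \uktrue\|$ via $\|\nabla f(x_k) + J_k^\top\yktrue\|\,\|u_k-\uktrue\|$ plus a constraint-violation remainder (using Lemma \ref{lem:vbound} and the structure of \eqref{eq:linsystrue} to handle the $J_k^\top\yktrue$ shift, exactly as in Lemma \ref{lem:ulb}), then invoke Lemma \ref{lem:ulb} to convert $\|\nabla f(x_k)+J_k^\top\yktrue\|^2$ into $\kappa_H^2\zeta^{-1}(\uktrue)^\top H_k\uktrue$ plus a $\|c_k\|_1$ term. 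The $\kappa_4$ constant in \eqref{eq:kappa4} is precisely the max of the coefficient $\zeta^{-1}\kappa_H^2$ arising from this Lemma \ref{lem:ulb} step and the coefficient $\beta_{-1}(1+2\kappa_u)\kappa_H^2\kappa_v\taumin/\sigma$ needed to absorb the resulting $\|c_k\|_1$ remainder into the $\tfrac{\alpha_k\sigma}{2\taumin}\|c_k\|_1$ slot, so the Young's-inequality weights must be chosen to make these two coefficients line up. The leftover ``error'' pieces then need to be expressed against $\|u_k\|^2/b_k^2$: here one uses $r_k \le \theta/b_k$ and $r_k \le \theta/q_k$ as appropriate, $\beta_{k-1} = \eta/b_{k-1} \le \eta/b_{-1}$, $q_k \ge q_{-1}$, $b_k \ge b_{-1}$, together with $\E_k[\|u_k-\uktrue\|^2] \le \zeta^{-2}M$ and $\E_k[\|u_k\|^2] \le \zeta^{-1}(\uktrue)^\top H_k\uktrue + \zeta^{-2}M \le \kappa_3$ (using Lemma \ref{lem:uktruebound} and Lemma \ref{lem:stochasticbounds}) to replace $(\uktrue)^\top H_k \uktrue$-type factors by $\kappa_3$, and to trade one power of $b_k$ in the denominator for $b_{-1}$ while keeping a single $b_k^2$ (or rather, recognizing that $\|u_k\|^2 = b_k^2 - b_{k-1}^2$ would be the quantity eventually summed via Lemma \ref{lem:logbound}, but at this stage we just want it in the form $\text{const}\cdot\|u_k\|^2/b_k^2$). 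Matching up the three explicit constants in the $\tfrac{\|u_k\|^2}{b_k^2}$ coefficient — $\tfrac{3\eta^2\kappa_3\kappa_4(\nu+\theta)^2}{2q_{-1}b_{-1}}$, $\tfrac{3\kappa_4\theta^2(\eta^2+\beta_{-1}^2\kappa_3)}{2\eta\nu}$, $\tfrac{3\zeta^{-1}M\kappa_4\theta^2\beta_{-1}}{2q_{-1}b_{-1}^2}$ — is just bookkeeping once the splits are fixed; the factor of $3$ suggests a three-way application of Young's inequality (or splitting the bound into three summands before applying Young to each).

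The main obstacle is the loss of $\mathcal{F}_k$-measurability of $\alpha_k$ and $\beta_k$: one cannot treat $\alpha_k$ as a constant inside $\E_k[\cdot]$, and naive bounds like $\alpha_k \le \nu/q_k + \theta/b_k$ followed by pulling the $\theta/b_k$ out would fail because $b_k$ itself depends on $g_k$ through $\|u_k\|^2$. The resolution is to keep $r_k$ and $\xi_k$ \emph{inside} the expectation throughout, bound $r_k \le \theta/b_{k}$ only at the very end after the Young's-inequality splits, and rely on the deterministic lower bounds $b_k \ge b_{-1}$, $q_k \ge q_{-1}$ (and on $\beta_{k-1}$ being genuinely $\mathcal{F}_k$-measurable) to extract clean constants. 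A secondary subtlety is making sure every $\|c_k\|_1$ remainder that appears — from Lemma \ref{lem:vbound}, from the $J_k^\top\yktrue$ shift, and from Lemma \ref{lem:ulb} — is collectively small enough to fit under the single budget $\tfrac{\alpha_k\sigma}{2\taumin}\|c_k\|_1$, which is what pins down the second branch of $\kappa_4$ and forces the use of the deterministic bound $\|c_k\|_1 \le \kappa_c$ nowhere (we keep $\|c_k\|_1$ symbolic) but $\beta_{k-1} \le \beta_{-1}$ everywhere.
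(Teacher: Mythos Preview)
Your proposal contains a genuine error at the very start that undermines the entire structure. You write $d_k - \dktrue = \beta_{k-1}(u_k - \uktrue)$, but this is not correct: the algorithm sets $d_k = v_k + \beta_k u_k$ with the \emph{current} adaptive $\beta_k = \eta/b_k$, while only $\dktrue$ is redefined in \eqref{eq:dktrueada} to use $\beta_{k-1}$. Hence
\[
d_k - \dktrue = \beta_k u_k - \beta_{k-1}\uktrue,
\]
and your claim that $\E_k\bigl[\tfrac{\nu}{q_k}\nabla f(x_k)^\top(d_k-\dktrue)\bigr]=0$ is false. Writing $\beta_k u_k - \beta_{k-1}\uktrue = \beta_{k-1}(u_k-\uktrue) + (\beta_k-\beta_{k-1})u_k$, the first piece does vanish under $\E_k$ when multiplied by the $\mathcal{F}_k$-measurable factor $\nu/q_k$, but the $(\beta_k-\beta_{k-1})u_k$ piece does \emph{not}: $\beta_k$ depends on $b_k$ and hence on $g_k$. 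This residual is precisely what drives the first of the three constants in the result---note the factor $(\nu+\theta)^2$ there, which cannot arise if the $\nu/q_k$ contribution were zero.

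The paper handles this by bounding $|\beta_k-\beta_{k-1}| = \eta(b_k-b_{k-1})/(b_{k-1}b_k) \le \eta\|u_k\|/(b_{k-1}b_k)$ and then, after inserting $J_k^\top\yktrue$ (licit since $u_k,\uktrue\in\text{Null}(J_k)$) and regrouping, arrives at three terms: one scaled by $(\nu+\theta)/q_k\cdot|\beta_k-\beta_{k-1}|\cdot\|u_k\|$, one by $\min\{\theta/b_k,\theta/q_k\}\cdot\beta_k\cdot\|u_k-\uktrue\|$, and one by $\theta/q_k\cdot|\beta_k-\beta_{k-1}|\cdot\|u_k-\uktrue\|$. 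Young's inequality is applied to each with separate parameters $\lambda_1,\lambda_2,\lambda_3$, and then Lemma~\ref{lem:ulb} converts $\|\nabla f(x_k)+J_k^\top\yktrue\|^2$ into $(\uktrue)^\top H_k\uktrue$ plus a $\|c_k\|_1$ term; the specific choices $\lambda_1=3\eta\kappa_3\kappa_4/\nu$, $\lambda_2=3\kappa_4/(\nu\eta)$, $\lambda_3=3\zeta^{-1}M\kappa_4/\nu$ then yield the stated constants. Your high-level instincts about Young's inequality, Lemma~\ref{lem:ulb}, and the role of $\kappa_4$ are right, but without the $(\beta_k-\beta_{k-1})$ terms you are missing two of the three summands and cannot recover the claimed bound.
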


\begin{proof}
    By the definition of $\dktrue$, we have
    \begin{equation*}
        \E_k\left[\alpha_k \nabla f(x_k)^T (d_k - \dktrue)\right] = \E_k\left[\alpha_k \nabla f(x_k)^T (\beta_k u_k - \beta_{k-1} \uktrue)\right].
    \end{equation*}
    Let $\xi_k \in [0,1]$ be the random variable such that $\alpha_k = \frac{\nu}{q_k} +\xi_k \min\{\frac{\theta}{b_k}, \frac{\theta}{q_k}\}$. Then, by Lemma \ref{lem:stochasticdkbound} and the fact that $\beta_{k-1}$, $\nu$, and $q_k$ are measurable to $\mathcal{F}_k$,
    \begin{align}
        &\E_k\left[\alpha_k \nabla f(x_k)^T (\beta_k u_k - \beta_{k-1} \uktrue)\right] \nonumber \\
        &= \E_k\left[\left(\frac{\nu}{q_k} +\xi_k \min\left\{\frac{\theta}{b_k}, \frac{\theta}{q_k}\right\}\right) \nabla f(x_k)^T (\beta_k u_k - \beta_{k-1} \uktrue)\right] \nonumber \\
        &= \E_k\Big[\frac{\nu}{q_k} (\beta_k - \beta_{k-1}) \nabla f(x_k)^T u_k \nonumber \\
        &\quad+ \xi_k \min\left\{\frac{\theta}{b_k}, \frac{\theta}{q_k}\right\} \nabla f(x_k)^T (\beta_k u_k - \beta_{k-1} \uktrue) \Big] \nonumber \\
        &= \E_k\Big[\frac{\nu}{q_k} (\beta_k - \beta_{k-1}) (\nabla f(x_k) + J_k^T \yktrue)^T u_k \nonumber \\
        &\quad+ \xi_k \min\left\{\frac{\theta}{b_k}, \frac{\theta}{q_k}\right\} (\nabla f(x_k) + J_k^T \yktrue)^T (\beta_k u_k - \beta_{k-1} \uktrue) \Big] \nonumber \\
        &= \E_k\Big[\frac{\nu}{q_k}  (\beta_k - \beta_{k-1}) (\nabla f(x_k) + J_k^T \yktrue)^T u_k \nonumber \\
        &\quad+  \xi_k \min\left\{\frac{\theta}{b_k}, \frac{\theta}{q_k}\right\} \beta_k (\nabla f(x_k) + J_k^T \yktrue)^T (u_k - \uktrue) \nonumber \\
        &\quad+ \xi_k \min\left\{\frac{\theta}{b_k}, \frac{\theta}{q_k}\right\} (\beta_k-\beta_{k-1}) (\nabla f(x_k) + J_k^T \yktrue)^T \uktrue \Big] \nonumber \\
        &= \E_k\Big[\left(\frac{\nu}{q_k} +\xi_k \min\left\{\frac{\theta}{b_k}, \frac{\theta}{q_k}\right\}\right)  (\beta_k - \beta_{k-1}) (\nabla f(x_k) + J_k^T \yktrue)^T u_k \nonumber \\
        &\quad+  \xi_k \min\left\{\frac{\theta}{b_k}, \frac{\theta}{q_k}\right\} \beta_k (\nabla f(x_k) + J_k^T \yktrue)^T (u_k - \uktrue) \nonumber \\
        &\quad+ \xi_k \min\left\{\frac{\theta}{b_k}, \frac{\theta}{q_k}\right\} (\beta_k-\beta_{k-1}) (\nabla f(x_k) + J_k^T \yktrue)^T (\uktrue - u_k) \Big] \nonumber \\
        &\leq \E_k\Big[\frac{\nu + \theta}{q_k}  |\beta_k - \beta_{k-1}| \|\nabla f(x_k) + J_k^T \yktrue\| \|u_k\| \nonumber \\
        &\quad+ \min\left\{\frac{\theta}{b_k}, \frac{\theta}{q_k}\right\} \beta_k \|\nabla f(x_k) + J_k^T \yktrue\| \|u_k - \uktrue\| \label{eq:innerprodada1} \\
        &\quad+ \frac{\theta}{q_k} |\beta_k-\beta_{k-1}| \|\nabla f(x_k) + J_k^T \yktrue\| \|\uktrue - u_k\| \Big], \nonumber
    \end{align}
    where the third equality follows by $u_k, \uktrue \in \text{Null}(J_k)$ and the inequality by the Cauchy-Schwarz inequality and $\xi_k \leq 1$.

    Now, we focus on the first term in \eqref{eq:innerprodada1},
    \begin{equation} \label{eq:innerprodada}
        |\beta_k-\beta_{k-1}| = \frac{\eta}{b_{k-1}}-\frac{\eta}{b_k} = \frac{\eta\|u_k\|^2}{b_{k-1}b_k(b_k+b_{k-1})} \leq \frac{\eta\|u_k\|}{b_{k-1}b_k},
    \end{equation}
    where the inequality follows by $\|u_k\| \leq b_k$. Therefore, applying Young's inequality, we have, for any $\lambda_1 > 0$ measurable to $\mathcal{F}_k$,
    \begin{align}
        &\E_k\left[\frac{\nu + \theta}{q_k} |\beta_{k-1} -\beta_k|\|\nabla f(x_k)+J_k^T \yktrue\| \|u_k\|\right] \nonumber \\
        &\leq \eta \E_k\left[\frac{(\nu + \theta) \|\nabla f(x_k)+J_k^T \yktrue\| \|u_k\|^2}{q_k b_{k-1} b_k}\right] \nonumber \\
        &\leq \frac{\eta \|\nabla f(x_k)+J_k^T \yktrue\|^2}{2 b_{k-1} q_k \lambda_1} \E_k[\|u_k\|^2] + \E_k \left[\frac{\eta \left(\nu + \theta\right)^2 \lambda_1 \|u_k\|^2}{2 q_k b_{k-1} b_k^2}\right] \nonumber \\
        &\leq \frac{\eta \kappa_3 \beta_{k-1} \|\nabla f(x_k)+J_k^T \yktrue\|^2 }{2q_k\lambda_1} + \E_k \left[\frac{\eta (\nu + \theta)^2 \lambda_1 \|u_k\|^2}{2 q_{k} b_{k-1}b_k^2}\right] \label{eq:innerprodada2}
    \end{align}
    where the final inequality follows by Assumption \ref{assum:Hbound} as well as the results of  Lemma \ref{lem:uktruebound} and Lemma \ref{lem:stochasticbounds}.

    Now, for the second term in \eqref{eq:innerprodada1}, by Young's inequality, for any $\lambda_2 > 0$ measurable to $\mathcal{F}_k$,
    \begin{align}
        &\E_k\left[\beta_k \min\left\{\frac{\theta}{b_k}, \frac{\theta}{q_k}\right\} \|\nabla f(x_k) + J_k^T \yktrue\| \|u_k - \uktrue\| \right] \nonumber \\
        &\leq \frac{1}{2 q_k b_k \lambda_2} \|\nabla f(x_k) + J_k^T \yktrue\|^2 + \E_k\left[\frac{\lambda_2 \theta^2 \beta_k^2}{2} \|u_k - \uktrue\|^2 \right]. \label{eq:innerprodada3}
    \end{align}
    Working with the last term in this inequality, since $q_k$ and $\beta_{k-1}$ are measurable to $\mathcal{F}_k$, by Lemma \ref{lem:stochasticbounds},
    \begin{align}
        &\E_k\left[\frac{\lambda_2 \theta^2 \beta_k^2}{2} \|u_k - \uktrue\|^2 \right] \nonumber \\
        &= \frac{\lambda_2 \theta^2}{2} \E_k\left[\beta_k^2(\|u_k\|^2 + \|\uktrue\|^2 - 2 u_k^T \uktrue) \right] \nonumber \\
        &= \frac{\lambda_2 \theta^2}{2} \E_k\left[\beta_k^2(\|u_k\|^2 + \|\uktrue\|^2 - 2 u_k^T \uktrue) + \beta_{k-1}^2(2 u_k^T \uktrue -2\|\uktrue\|^2)  \right] \nonumber \\
        &\leq \frac{\lambda_2 \theta^2}{2} \E_k\left[\beta_k^2 \|u_k\|^2 + 2|\beta_k^2 - \beta_{k-1}^2| \|u_k\| \|\uktrue\| - \beta_{k-1}^2 \|\uktrue\|^2 \right] \nonumber \\
        &= \frac{\lambda_2 \theta^2}{2} \E_k\left[\beta_k^2 \|u_k\|^2 + 2\eta^2 \frac{\|u_k\|^2}{b_{k-1}^2 b_k^2} \|u_k\| \|\uktrue\| - \beta_{k-1}^2 \|\uktrue\|^2\right] \nonumber \\
        &\leq \frac{\lambda_2 \theta^2}{2} \E_k\left[\beta_k^2 \|u_k\|^2 + 2\eta^2 \frac{\|u_k\|^2}{b_{k-1}^2 b_k} \|\uktrue\| - \beta_{k-1}^2 \|\uktrue\|^2\right] \label{eq:innerprodada4}
    \end{align}
    where the first inequality follows by $\beta_k \leq \beta_{k-1}$ and the second inequality follows by $\|u_k\| \leq b_k$. Dealing with the second term in this inequality, again applying Young's inequality and using $\beta_{k-1} = \eta/b_{k-1}$, by Lemmas \ref{lem:uktruebound} and \ref{lem:stochasticbounds} as well as Assumption \ref{assum:Hbound},
    \begin{align*}
        &\frac{\lambda_2 \theta^2}{2} \E_k\left[\frac{2\beta_{k-1}^2 \|u_k\|^2}{b_k} \|\uktrue\| \right] \\
        &\leq \frac{\lambda_2 \theta^2 \eta^2}{2} \E_k\left[\frac{\beta_{k-1}^2 \|u_k\|^2 \|\uktrue\|^2}{\kappa_3} + \frac{\beta_{k-1}^2\kappa_3\|u_k\|^2}{b_k^2}\right] \\
        &\leq \frac{\lambda_2 \theta^2}{2} \E_k\left[\frac{\beta_{k-1}^2 \rev{(\kappa_u^2 + \zeta^{-2} M)} \|\uktrue\|^2}{\kappa_3} + \frac{\beta_{k-1}^2 \kappa_3 \|u_k\|^2}{b_k^2}\right] \\
        &=\frac{\lambda_2 \theta^2}{2} \E_k\left[\beta_{k-1}^2\|\uktrue\|^2 + \frac{\beta_{k-1}^2 \kappa_3 \|u_k\|^2}{b_k^2}\right],
    \end{align*}
    so that the first term cancels with the last in \eqref{eq:innerprodada4}.

    Now, for the final term in \eqref{eq:innerprodada1}, by \eqref{eq:innerprodada}, applying Young's inequality for some $\lambda_3 > 0$ that is measurable to $\mathcal{F}_k$, by Lemma \ref{lem:stochasticbounds},
    \begin{align*}
        &\E_k\left[\frac{\theta}{q_k} |\beta_k-\beta_{k-1}| \|\nabla f(x_k) + J_k^T \yktrue\| \|\uktrue - u_k\| \right] \\
        &\leq \E_k\left[\frac{\theta}{q_k} \frac{\beta_{k-1} \|u_k\|}{b_k} \|\nabla f(x_k) + J_k^T \yktrue\| \|\uktrue - u_k\| \right] \\
        &\leq \E_k\left[\frac{\beta_{k-1}}{2 \lambda_3 q_k} \|\nabla f(x_k) + J_k^T \yktrue\|^2 \|\uktrue - u_k\|^2  + \frac{\theta^2 \lambda_3 \beta_{k-1} \|u_k\|^2 }{2 q_k b_k^2}  \right] \\
        &\leq \frac{\rev{\zeta^{-2}} M\beta_{k-1}}{2 \lambda_3 q_k} \|\nabla f(x_k) + J_k^T \yktrue\|^2  + \E_k\left[\frac{\theta^2 \lambda_3 \beta_{k-1} \|u_k\|^2 }{2 q_k b_k^2}  \right]
    \end{align*}
    
    Therefore, combining \eqref{eq:innerprodada1}, \eqref{eq:innerprodada2}, \eqref{eq:innerprodada3}, and \eqref{eq:innerprodada4} we have
    \begin{align*}
        &\E_k\left[\alpha_k \nabla f(x_k)^T (\beta_k u_k - \beta_{k-1} \uktrue)\right] \nonumber \\
        &\leq \E_k\Bigg[\left(\frac{\eta \kappa_3 \beta_{k-1}}{2q_k\lambda_1} + \frac{1}{2 q_k b_k \lambda_2} + \frac{\rev{\zeta^{-2}} M \beta_{k-1}}{2 q_k\lambda_3}\right) \|\nabla f(x_k)+J_k^T \yktrue\|^2 \\
        &\quad+ \left(\frac{\lambda_1\eta (\nu + \theta)^2}{2 q_{k} b_{k-1}} + \frac{\lambda_2 \theta^2 (\eta^2 + \beta_{k-1}^2 \kappa_3)}{2} + \frac{\lambda_3 \theta^2 \beta_{k-1}}{2 q_k}\right)\frac{\|u_k^2\|}{b_k^2} \Bigg]
    \end{align*}
   Applying Lemma \ref{lem:ulb},
    \begin{align*}
        &\E_k\left[\alpha_k \nabla f(x_k)^T (\beta_k u_k - \beta_{k-1} \uktrue)\right] \nonumber \\
        &\leq \E_k\Bigg[\left(\frac{\eta \kappa_3 \beta_{k-1}}{2q_k\lambda_1} + \frac{1}{2 q_k b_k \lambda_2} + \frac{\rev{\zeta^{-2}} M \beta_{k-1}}{2 q_k\lambda_3}\right) (\zeta^{-1} \kappa_H^2 (\uktrue)^T H_k \uktrue + (1 + 2 \kappa_u)\kappa_H^2 \kappa_v \|c_k\|_1) \\
        &\quad+ \left(\frac{\lambda_1\eta (\nu + \theta)^2}{2 q_{k} b_{k-1}} + \frac{\lambda_2 \theta^2 (\eta^2 + \beta_{k-1}^2 \kappa_3)}{2} + \frac{\lambda_3 \theta^2 \beta_{k-1}}{2 q_k}\right)\frac{\|u_k\|^2}{b_k^2} \Bigg].
    \end{align*}
    Choosing $\lambda_1 = \frac{3 \eta \kappa_3 \kappa_4}{\nu}$, $\lambda_2 = \frac{3 \kappa_4}{\nu \eta}$, and $\lambda_3 = \frac{3 \rev{\zeta^{-2}} M \kappa_4}{\nu}$ and using $\nu/q_k \leq \alpha_k$, $q_k \geq q_{-1}$, and $b_{k-1} \geq b_{-1}$ proves the result. \qed
\end{proof}

Now, we are prepared to present the first main result of this subsection.
\begin{theorem} \label{thm:adaconvergence}
    Let Assumptions \ref{assum:fcsmooth}, \ref{assum:Hbound}, and \ref{assum:stochG} hold. 
    Let
    \begin{equation} \label{eq:kappa5}
        \kappa_5 := \frac{(\nu+\theta)^2 (\kappa_v(\taumin L + \Gamma) + 4)}{2}.
    \end{equation}

    \begin{align} \label{eq:kappa6}
        \kappa_6 &:= \frac{\eta^2 (\nu+\theta)^2 (\taumin L + \Gamma)}{2q_{-1}^2} + \frac{3 \eta^2 \kappa_3 \kappa_4 (\nu + \theta)^2}{2 q_{-1} b_{-1}} \\
        &\quad \ + \frac{3 \kappa_4 \theta^2 (\eta^2 + \beta_{-1}^2 \kappa_3)}{2 \eta \nu} + \frac{3 \rev{\zeta^{-2}} M \kappa_4 \theta^2 \beta_{-1}}{2 q_{-1} b_{-1}^2}.\nonumber
    \end{align}
    Then,
    \begin{align} 
        &\E\left[\sum_{k=0}^{K-1} \frac{\alpha_k \taumin \beta_{k-1}}{2} (\uktrue)^T H_k \uktrue + \frac{\alpha_k \sigma}{2}\|c_k\|_1\right] \nonumber \\
        &\leq \taumin (f_{-1} - \fmin) + \|c_{-1}\|_1 + \kappa_5 \log(1+\kappa_c K/q_{-1}^2) \label{eq:adaconvergence} \\
        &\quad+ \kappa_6 \log (1+\rev{(\kappa_u^2 + \zeta^{-2} M)}K/b_{-1}^2)).\nonumber
    \end{align}
    In addition,
    \begin{align*}
        \E[q_{K-1}] &\leq q_{-1} + \frac{2}{\nu \sigma}(\taumin (f_{-1} - \fmin) + \|c_{-1}\|_1 + \kappa_5 \log(1+\kappa_c K/q_{-1}^2) \\
        &\quad+ \kappa_6 \log (1+\rev{(\kappa_u^2 + \zeta^{-2} M)}K/b_{-1}^2).
    \end{align*}
\end{theorem}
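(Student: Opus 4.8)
The plan is to combine the merit-function decrease \eqref{eq:descent} with the reduction lower bound \eqref{eq:Dllb} (which, as noted after \eqref{eq:dktrueada}, still holds with $\beta_k$ replaced by $\beta_{k-1}$) and the cross-term bound of Lemma~\ref{lem:innerprodada}, take conditional and then total expectations, telescope over $k=0,\dots,K-1$, and control the two ``adaptive'' sums $\sum_k \|c_k\|_1/q_k^2$ and $\sum_k \|u_k\|^2/b_k^2$ that appear by invoking Lemma~\ref{lem:logbound}. The bound on $\E[q_{K-1}]$ then drops out of \eqref{eq:adaconvergence} by a short rearrangement.

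Concretely, I would first take $\E_k[\cdot]$ of \eqref{eq:descent}. The rules \eqref{eq:adastep} give $\alpha_k\le(\nu+\theta)/q_k$, $\beta_k=\eta/b_k$, and $q_k\ge q_{-1}$, so that $\tfrac{\alpha_k^2\beta_k^2}{2}(\taumin L+\Gamma)\|u_k\|^2 \le \tfrac{\eta^2(\nu+\theta)^2(\taumin L+\Gamma)}{2q_{-1}^2}\cdot\tfrac{\|u_k\|^2}{b_k^2}$ (the first term of $\kappa_6$ times $\|u_k\|^2/b_k^2$) and $\tfrac{\alpha_k^2}{2}(\kappa_v(\taumin L+\Gamma)+4)\|c_k\|_1 \le \kappa_5\,\|c_k\|_1/q_k^2$. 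Applying \eqref{eq:Dllb} to the reduction term and bounding $\E_k[\alpha_k\taumin\nabla f(x_k)^T(d_k-\dktrue)]$ by $\taumin$ times the right-hand side of \eqref{eq:innerprodadaresult}, the $\tfrac{\alpha_k\taumin\beta_{k-1}}{2}(\uktrue)^TH_k\uktrue$ and $\tfrac{\alpha_k\sigma}{2}\|c_k\|_1$ contributions from Lemma~\ref{lem:innerprodada} cancel exactly half of the two decrease terms, while its remaining $\|u_k\|^2/b_k^2$ contributions supply the last three terms of $\kappa_6$; collecting everything leaves
\[
\E_k[\phi(x_{k+1},\taumin)]-\phi(x_k,\taumin)\le -\E_k\left[\tfrac{\alpha_k\taumin\beta_{k-1}}{2}(\uktrue)^TH_k\uktrue+\tfrac{\alpha_k\sigma}{2}\|c_k\|_1\right]+\kappa_5\tfrac{\|c_k\|_1}{q_k^2}+\kappa_6\,\E_k\left[\tfrac{\|u_k\|^2}{b_k^2}\right].
\]
Taking total expectations, summing over $k=0,\dots,K-1$, telescoping, and using $-\E[\phi(x_K,\taumin)]\le-\taumin\fmin$ (since $\|c_K\|_1\ge0$ and $f(x_K)\ge\fmin$) bounds the left side of \eqref{eq:adaconvergence} by the initial merit value $\taumin(f_{-1}-\fmin)+\|c_{-1}\|_1$ plus $\kappa_5\,\E[\sum_k\|c_k\|_1/q_k^2]+\kappa_6\,\E[\sum_k\|u_k\|^2/b_k^2]$.

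It remains to bound these two sums. Since $q_k^2=q_{-1}^2+\sum_{i=0}^k\|c_i\|_1$, Lemma~\ref{lem:logbound} (with $a_0=q_{-1}^2$, $a_i=\|c_{i-1}\|_1$) gives $\sum_{k=0}^{K-1}\|c_k\|_1/q_k^2\le\log(1+\sum_{k=0}^{K-1}\|c_k\|_1/q_{-1}^2)\le\log(1+\kappa_c K/q_{-1}^2)$ path-wise, using $\|c_k\|_1\le\kappa_c$ from Assumption~\ref{assum:fcsmooth}, so no expectation is needed here. For the second sum, $\|u_k\|^2$ is \emph{not} bounded path-wise, so I would peel off the $k=0$ term (equal to $\|u_0\|^2/b_0^2\le1$, which accounts for the isolated $+1$ in \eqref{eq:adaconvergence}) and apply Lemma~\ref{lem:logbound} to $k\ge1$, obtaining the still-random bound $\log(1+\sum_{k=0}^{K-1}\|u_k\|^2/b_{-1}^2)$; taking expectations and using concavity of $\log$ gives $\le\log(1+\E[\sum_k\|u_k\|^2]/b_{-1}^2)$, and Lemma~\ref{lem:stochasticbounds} together with $(\uktrue)^TH_k\uktrue\le\kappa_H\kappa_u^2$ (from Assumption~\ref{assum:Hbound} and Lemma~\ref{lem:uktruebound}) yields $\E[\|u_k\|^2]\le\zeta^{-1}(\kappa_H\kappa_u^2+\zeta^{-1}M)$, so $\E[\sum_{k=0}^{K-1}\|u_k\|^2]\le\zeta^{-1}(\kappa_H\kappa_u^2+\zeta^{-1}M)K$. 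Substituting gives \eqref{eq:adaconvergence}.

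Finally, the $\E[q_{K-1}]$ bound follows from \eqref{eq:adaconvergence} by dropping the nonnegative term $\tfrac{\alpha_k\taumin\beta_{k-1}}{2}(\uktrue)^TH_k\uktrue$ and using monotonicity of $\{q_k\}$: $\alpha_k\ge\nu/q_k\ge\nu/q_{K-1}$ gives $\sum_{k=0}^{K-1}\tfrac{\alpha_k\sigma}{2}\|c_k\|_1\ge\tfrac{\nu\sigma}{2q_{K-1}}\sum_{k=0}^{K-1}\|c_k\|_1=\tfrac{\nu\sigma}{2q_{K-1}}(q_{K-1}^2-q_{-1}^2)\ge\tfrac{\nu\sigma}{2}(q_{K-1}-q_{-1})$, the last step because $q_{-1}^2/q_{K-1}\le q_{-1}$; taking expectations and rearranging gives the claim. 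Taking Lemmas~\ref{lem:innerprodada}, \ref{lem:logbound} and~\ref{lem:stochasticbounds} as given, the only genuinely delicate points are (i) verifying that the several coefficients of $\|u_k\|^2/b_k^2$ collected from \eqref{eq:descent} and Lemma~\ref{lem:innerprodada} indeed add to $\kappa_6$, and (ii) the log-sum estimate for $\sum_k\|u_k\|^2/b_k^2$, where — unlike the $\|c_k\|_1$ sum — one must keep the bound random until after Jensen's inequality before applying the second-moment estimate; the hardest single estimate, the three-parameter Young's-inequality splitting inside Lemma~\ref{lem:innerprodada}, is already in hand.
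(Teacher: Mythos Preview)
Your proposal is correct and follows essentially the same route as the paper's proof: take $\E_k$ of Lemma~\ref{lem:descentlemma}, bound the second-order terms via $\alpha_k\le(\nu+\theta)/q_k$ and $\beta_k=\eta/b_k$, apply Lemma~\ref{lem:innerprodada} to the cross term and \eqref{eq:Dllb} to the reduction, telescope, and control the two adaptive sums with Lemma~\ref{lem:logbound} plus Jensen. The $\E[q_{K-1}]$ bound is obtained in the paper exactly as you describe.

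The only cosmetic difference is in the $\sum_k\|u_k\|^2/b_k^2$ estimate. The paper applies Lemma~\ref{lem:logbound} directly with $a_0=b_{-1}^2$ and $a_i=\|u_{i-1}\|^2$ for $i\ge1$, obtaining $\log(b_{K-1}^2/b_{-1}^2)$ in one shot (no peeling), and then Jensen plus $\E[\|u_k\|^2]\le\zeta^{-1}(\kappa_H\kappa_u^2+\zeta^{-1}M)$ gives $\log(1+\zeta^{-1}(\kappa_H\kappa_u^2+\zeta^{-1}M)K/b_{-1}^2)$. Your peel-off of the $k=0$ term is therefore unnecessary; you were trying to account for the ``$1+$'' in the stated bound \eqref{eq:adaconvergence}, but in fact the paper's own proof ends with $\kappa_6\log(1+\cdot)$ rather than $\kappa_6(1+\log(\cdot))$, and the second conclusion of the theorem is indeed written with $\log(1+\cdot)$, so the discrepancy is a typographical inconsistency in the statement rather than something requiring an extra step.
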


\begin{proof}
    By Lemma \ref{lem:descentlemma}, we have
    \begin{align*}
        &\E_k[\phi(x_k + \alpha_k d_k, \taumin)] - \phi(x_k, \taumin) \\
        &\quad\leq -\E_k[\alpha_k \Delta l(x_k, \taumin, \dktrue)] + \E_k\left[\frac{\alpha_k^2 \beta_k^2}{2}(\taumin L + \Gamma) \|u_k\|^2\right] \\
        &\quad+ \E_k\left[\frac{\alpha_k^2}{2} (\kappa_v(\taumin L + \Gamma) + 4)  \|c_k\|_1\right] + \E_k[\alpha_k \taumin \nabla f(x_k)^T (d_k - \dktrue)].
    \end{align*}

    To prove the result, we need to bound the final three terms. Starting with the first of these, we have that
    \begin{equation*}
        \E_k\left[\frac{\alpha_k^2 \beta_k^2}{2}(\taumin L + \Gamma) \|u_k\|^2\right] \leq \frac{\eta^2 (\nu+\theta)^2 (\taumin L + \Gamma)}{2q_{-1}^2}\E_k\left[ \frac{\|u_k\|^2}{b_k^2} \right],
    \end{equation*}
    where the inequality follows due to the definition of $\alpha_k$ and $q_k \geq q_{-1}$. For the next term,
    \begin{equation*}
        \frac{\alpha_k^2 (\kappa_v(\taumin L + \Gamma) + 4)}{2}   \|c_k\|_1 \leq \frac{(\nu+\theta)^2 (\kappa_v(\taumin L + \Gamma) + 4)}{2q_k^2}   \|c_k\|_1 = \frac{\kappa_5 \|c_k\|_1}{q_k^2}.
    \end{equation*}
    Now, applying the result of Lemma \ref{lem:innerprodada}, we have
    \begin{align*}
        &\E_k[\phi(x_k + \alpha_k d_k, \taumin)] - \phi(x_k, \taumin) \\
        &\quad\leq -\E_k[\alpha_k \Delta l(x_k, \taumin, \dktrue)] + \E_k \left[\frac{\alpha_k \taumin \beta_{k-1}}{2} (\uktrue)^T H_k \uktrue\right]  \\
        &\quad+ \E_k\left[\frac{\alpha_k \sigma}{2} \|c_k\|_1\right] + \frac{\kappa_5 \|c_k\|_1}{q_k^2} + \kappa_6 \E_k\left[ \frac{\|u_k\|^2}{b_k^2} \right].
    \end{align*}

    Then, applying \eqref{eq:Dllb} (where we note that under the re-definition of $\dktrue$ in \eqref{eq:dktrueada}, $\beta_k$ is replaced by $\beta_{k-1}$), it follows that
    \begin{align*}
        &\E_k[\phi(x_k + \alpha_k d_k, \taumin)] - \phi(x_k, \taumin) \\
        &\quad\leq -\E_k \left[\frac{\alpha_k \taumin \beta_{k-1}}{2} (\uktrue)^T H_k \uktrue\right] - \E_k\left[\frac{\alpha_k \sigma}{2} \|c_k\|_1\right]  \\
        &\quad+ \frac{\kappa_5 \|c_k\|_1}{q_k^2} + \kappa_6 \E_k\left[ \frac{\|u_k\|^2}{b_k^2} \right].
    \end{align*}

    Next, taking the total expectation of this inequality and summing for all $k=0,\dots,K-1$,
    \begin{align*}
        &\E\left[\sum_{k=0}^{K-1} \frac{\alpha_k \taumin \beta_{k-1}}{2} (\uktrue)^T H_k \uktrue + \frac{\alpha_k \sigma}{2}\|c_k\|_1\right] \\
        &\leq \phi(x_{-1},\taumin) - \E[\phi(x_{K},\taumin)] + \E\left[\kappa_5 \sum_{k=0}^{K-1} \frac{\|c_k\|_1}{q_k^2}\right] + \E\left[\kappa_6 \sum_{k=0}^{K-1} \frac{\|u_k\|^2}{b_k^2} \right].
    \end{align*}
    By the definition of $\phi$ and Assumption \ref{assum:fcsmooth}, it follows that
    \begin{align*}
        \phi(x_{-1},\taumin) - \E[\phi(x_{K},\taumin)] &= \taumin f_{-1} + \|c_{-1}\|_1 - \E[\taumin f_K - \|c_K\|_1] \\
        &\leq \taumin (f_{-1} - \fmin) + \|c_{-1}\|_1.
    \end{align*}
    Now, applying Lemma \ref{lem:logbound} twice, by Assumption \ref{assum:fcsmooth}, it follows that
    \begin{align*}
        &\E\left[\sum_{k=0}^{K-1} \frac{\alpha_k \taumin \beta_{k-1}}{2} (\uktrue)^T H_k \uktrue + \frac{\alpha_k \sigma}{2}\|c_k\|_1\right] \\
        &\leq \taumin (f_{-1} - \fmin) + \|c_{-1}\|_1 + \kappa_5 \log(1+\kappa_c K/q_{-1}^2) \\
        &\quad+ \kappa_6 \E\left[\log\left(\frac{b_{-1}^2 + \sum_{k=0}^{K-1} \|u_k\|^2}{b_{-1}^2}\right)\right].
    \end{align*}
    Using Jensen's inequality, the tower rule, and the results of Lemma \ref{lem:uktruebound} and Lemma \ref{lem:stochasticbounds},
    \begin{equation} \label{eq:adaptiveubound}
    \E\left[\log\left(\frac{b_{-1}^2 + \sum_{k=0}^{K-1} \|u_k\|^2}{b_{-1}^2}\right)\right] \leq \log \left(1+\rev{(\kappa_u^2 + \zeta^{-2} M)}K/b_{-1}^2 \right)
    \end{equation}
    and thus,
    \begin{align*}
        &\E\left[\sum_{k=0}^{K-1} \frac{\alpha_k \taumin \beta_{k-1}}{2} (\uktrue)^T H_k \uktrue + \frac{\alpha_k \sigma}{2}\|c_k\|_1\right] \\
        &\leq \taumin (f_{-1} - \fmin) + \|c_{-1}\|_1 + \kappa_5 \log(1+\kappa_c K/q_{-1}^2) \\
        &\quad+ \kappa_6 \log (1+\rev{(\kappa_u^2 + \zeta^{-2} M)}K/b_{-1}^2),
    \end{align*}
    proving the first result.

    To prove the second result, note that
    \begin{equation*}
        q_{K-1} = \frac{q_{-1}^2 + \sum_{k=0}^{K-1} \|c_k\|_1}{q_{K-1}} \leq q_{-1} + \sum_{k=0}^{K-1} \frac{\|c_k\|_1}{q_k} \leq q_{-1} + \frac{1}{\nu}\sum_{k=0}^{K-1} \alpha_k \|c_k\|_1,
    \end{equation*}
    and therefore, by \eqref{eq:adaconvergence},
    \begin{align*}
        \E[q_{K-1}] &\leq q_{-1} + \frac{2}{\nu \sigma}(\taumin (f_{-1} - \fmin) + \|c_{-1}\|_1 + \kappa_5 \log(1+\kappa_c K/q_{-1}^2) \\
        &\quad+ \kappa_6 \log (1+\rev{(\kappa_u^2 + \zeta^{-2} M)}K/b_{-1}^2).
    \end{align*}
    \qed
\end{proof}

Next, we derive the following corollary, from which our complexity results for this subsection will follow directly.

\begin{corollary} \label{cor:adacomplexity}
    Let the assumptions of Theorem \ref{thm:adaconvergence} hold. Let 
    \begin{align}
        \kappa_7(K) &:= \taumin (f_{-1} - \fmin) + \|c_{-1}\|_1 + \kappa_5 \log(1+\kappa_c K/q_{-1})\label{eq:kappa7} \\
        &\quad+ \kappa_6 \log (1+\rev{(\kappa_u^2 + \zeta^{-2} M)}K/b_{-1}) \nonumber
    \end{align}
    and 
    \begin{equation}
        \kappa_8(K) := \sqrt{b_{-1}^2 + \rev{(\kappa_u^2 + \zeta^{-2} M)}K}. \label{eq:kappa8}
    \end{equation}
    Then, with probability at least $1-\delta_1$,
    \begin{equation} \label{eq:ccomplexityada}
        \E\left[\frac{1}{K}\sum_{k=0}^{K-1} \|c_k\|_1\right] \leq \frac{2(\nu \sigma q_{-1} + 2\kappa_7(K))\kappa_7(K) }{\nu^2 \sigma^2 \delta_1 K},
    \end{equation}
    with probability at least $1-\delta_2$,
    \begin{align} 
        &\E\left[\frac{1}{K}\sum_{k=0}^{K-1} \|\nabla f(x_k) + J_k^T \yktrue\|^2\right] \nonumber \\
        &\quad\leq \frac{8 \kappa_H^2 \kappa_8(K) (\nu \sigma q_{-1} + 2\kappa_7(K))\kappa_7(K)}{\taumin \nu^2 \eta \zeta \sigma \delta_2^2 K} \label{eq:gcomplexityada} \\
        &\quad+\frac{4(\nu \sigma q_{-1} + 2\kappa_7(K))\kappa_H^2(1 +2 \kappa_u) \kappa_v \kappa_7(K) }{\nu^2 \sigma^2 \delta_2 K}, \nonumber
    \end{align}
    and with probability at least $1-\delta_3$,
    \begin{align}
        &\underset{k\in [0,K-1]}{\min} \ \taumin \zeta \kappa_H^{-2} \|\nabla f(x_k) + J_k^T \yktrue\|^2 + \frac{\sigma \kappa_8(K)}{\eta} \|c_k\|_1 \nonumber \\
        &\quad\leq \frac{54\kappa_8(K)(\nu \sigma q_{-1} + 2\kappa_7(K))\kappa_7(K)}{\nu^2 \eta \sigma \delta_3^3 K} \label{eq:combinedcomplexityada} \\
        &\quad+\frac{18(\nu \sigma q_{-1}
        + 2\kappa_7(K))\kappa_7(K) \zeta \taumin (1 +2 \kappa_u) \kappa_v}{\nu^2 \sigma^2 \delta_3^2 K}. \nonumber
    \end{align}
\end{corollary}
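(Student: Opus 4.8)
The plan is to turn the two in-expectation conclusions of Theorem~\ref{thm:adaconvergence} into the three high-probability complexity estimates by (i) discarding nonnegative terms and, where the Lagrangian gradient is wanted, substituting the lower bound of Lemma~\ref{lem:ulb}; (ii) exploiting the monotonicity of the AdaGrad accumulators $\{q_k\}$ and $\{b_k\}$ to pull the data-dependent denominators out of the sums; and (iii) closing the loop with Markov's inequality. I would first record two preparatory facts about the terminal accumulators. From $q_{K-1}^2=q_{-1}^2+\sum_{k=0}^{K-1}\|c_k\|_1$, the telescoping inequality $\sum_{k=0}^{K-1}\|c_k\|_1/q_k\ge q_{K-1}-q_{-1}$, and the $q_{K-1}$-bound in Theorem~\ref{thm:adaconvergence}, one gets $\E[q_{K-1}]\le q_{-1}+\tfrac{2}{\nu\sigma}\kappa_7(K)$ (using that $\kappa_7(K)$, whose logarithms carry $q_{-1},b_{-1}$, dominates the right side of \eqref{eq:adaconvergence}, whose logarithms carry $q_{-1}^2,b_{-1}^2$, for the intended initializations); and from $b_{K-1}^2=b_{-1}^2+\sum_{k=0}^{K-1}\|u_k\|^2$, Lemma~\ref{lem:stochasticbounds} together with $\|\uktrue\|\le\kappa_u$ from Lemma~\ref{lem:uktruebound}, and Jensen's inequality, one gets $\E[b_{K-1}]\le\kappa_8(K)$.

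For \eqref{eq:ccomplexityada}, I would drop the $(\uktrue)^TH_k\uktrue$ term from \eqref{eq:adaconvergence} and use $\alpha_k\ge\nu/q_k$ to get $\E[\sum_k\|c_k\|_1/q_k]\le\tfrac{2}{\nu\sigma}\kappa_7(K)$; combined with $\sum_k\|c_k\|_1=q_{K-1}^2-q_{-1}^2$ and the pathwise bound $q_{K-1}\le q_{-1}+\tfrac{2}{\nu\sigma}\sum_k\tfrac{\alpha_k\sigma}{2}\|c_k\|_1$ (again from telescoping), this expresses $\sum_k\|c_k\|_1$ through $q_{-1}$, $\E[q_{K-1}]$, and the weighted sum $\sum_k\tfrac{\alpha_k\sigma}{2}\|c_k\|_1$ (which has expectation at most $\kappa_7(K)$). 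Dividing by $K$ and applying Markov's inequality once then yields the stated bound, the $\delta_1^{-1}$ prefactor being the cost of that single Markov step.

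For \eqref{eq:gcomplexityada} and \eqref{eq:combinedcomplexityada}, inside \eqref{eq:adaconvergence} I would replace $(\uktrue)^TH_k\uktrue$ by $\zeta\kappa_H^{-2}\|\nabla f(x_k)+J_k^T\yktrue\|^2-(1+2\kappa_u)\zeta\kappa_v\|c_k\|_1$ using Lemma~\ref{lem:ulb}, lower-bound $\alpha_k\beta_{k-1}\ge\nu\eta/(q_{K-1}b_{K-1})$ on the gradient term and bound the $\|c_k\|_1$ correction term by the constraint-violation control already obtained, and rearrange to isolate $\E[\tfrac{1}{q_{K-1}b_{K-1}}\sum_k\|\nabla f(x_k)+J_k^T\yktrue\|^2]$. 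Bounding $q_{K-1}$ and $b_{K-1}$ from above by Markov (each at confidence $1-\delta_2/2$, so the leading term picks up $\delta_2^{-2}$; the correction term only involves $q_{K-1}$, hence $\delta_2^{-1}$) and dividing by $K$ gives \eqref{eq:gcomplexityada}. The combined bound follows the same template but retains both terms of \eqref{eq:adaconvergence}, passes to $\min_{k\in[0,K-1]}$, and needs a third Markov step --- on the weighted sum $\sum_k\tfrac{\alpha_k\sigma}{2}\|c_k\|_1$ itself, since a high-probability bound on a minimum is read off from one on the sum --- producing $\delta_3^{-3}$ after a union bound.

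The main obstacle is the self-referential coupling between the quantities being bounded and the accumulators that weight them: $\sum_k\|c_k\|_1$ \emph{equals} $q_{K-1}^2-q_{-1}^2$ and $\sum_k\|u_k\|^2$ sits inside $b_{K-1}$, so taking expectations directly is circular, while staying fully pathwise only yields the weaker $O(1/\sqrt K)$ rate (through the crude deterministic bound $q_{K-1}\le q_{-1}+\sqrt{\kappa_c K}$). The resolution is the two-step device of first trading the stepsize denominators for the terminal accumulators via monotonicity and telescoping, then controlling those accumulators only in expectation and invoking Markov --- which is exactly why the conclusions are high-probability statements with $\delta^{-1},\delta^{-2},\delta^{-3}$ prefactors rather than clean in-expectation bounds. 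Minor additional care is needed to confirm that the re-definition \eqref{eq:dktrueada} of $\dktrue$ leaves Lemmas~\ref{lem:taumin}, \ref{lem:descentlemma}, \ref{lem:ulb} and inequality \eqref{eq:Dllb} valid (as remarked after \eqref{eq:dktrueada}) and to track the logarithmic $\kappa_5,\kappa_6,\kappa_7(K)$ factors that each division inflates.
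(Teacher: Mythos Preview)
Your proposal is correct and follows essentially the same route as the paper's proof: apply Markov's inequality to the expectation bound $\E[q_{K-1}]\le q_{-1}+\tfrac{2}{\nu\sigma}\kappa_7(K)$ from Theorem~\ref{thm:adaconvergence} to obtain a high-probability lower bound on $\alpha_k$ (via monotonicity of $q_k$), bound $\E[b_{K-1}]\le\kappa_8(K)$ by Jensen and Lemma~\ref{lem:stochasticbounds} to do the same for $\beta_{k-1}$, invoke Lemma~\ref{lem:ulb} to pass from $(\uktrue)^TH_k\uktrue$ to the Lagrangian gradient, and assemble the three statements via union bounds with the $\delta_i$ split evenly across the required Markov applications. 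Your explicit articulation of the self-referential coupling and the two-step ``accumulator $\to$ expectation $\to$ Markov'' device is exactly the mechanism the paper uses, and your observation about the $q_{-1}$ versus $q_{-1}^2$ discrepancy in the logarithms of $\kappa_7(K)$ is a genuine (if inconsequential) slip in the statement.
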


\begin{proof}
    \rev{Note that for all $k \in [0, K-1]$, we have that
    \begin{equation*}
        \alpha_k \geq \frac{\nu}{q_k} \geq \frac{\nu}{q_{K-1}}.
    \end{equation*}}
    \rev{Additionally, by} Theorem \ref{thm:adaconvergence} and Markov's inequality, it follows that with probability at least $1-\delta_1$,
    \begin{equation} \label{eq:alphadalb}
        \rev{\frac{\nu}{q_{K-1}}} \geq \frac{\nu^2 \sigma \delta_1}{\nu \sigma q_{-1} + 2\kappa_7(K)}.
    \end{equation}
    Therefore, by \eqref{eq:adaconvergence} and Assumption \ref{assum:Hbound}, it follows that with probability at least $1-\delta_1$,
    \begin{equation*} 
        \E\left[\sum_{k=0}^{K-1} \frac{\nu^2 \sigma^2 \delta_1}{2(\nu \sigma q_{-1} + 2\kappa_7(K)) }\|c_k\|_1\right] \leq \E\left[\sum_{k=0}^{K-1} \frac{\alpha_k \sigma}{2}\|c_k\|_1\right] \leq \kappa_7(K),
    \end{equation*}
    and thus
    \begin{equation*}
        \frac{1}{K}\E\left[\sum_{k=0}^{K-1} \|c_k\|_1\right] \leq \frac{2(\nu \sigma q_{-1} + 2\kappa_7(K))\kappa_7(K) }{\nu^2 \sigma^2 \delta_1 K},
    \end{equation*}
    which proves the first result.
    
    Next, by the law of iterated expectation, Jensen's inequality, and the results of Lemmas \ref{lem:uktruebound} and \ref{lem:stochasticbounds},
    \begin{equation} \label{eq:bkbound}
        \E[b_{K-1}] = \E\left[\sqrt{b_{-1}^2 + \sum_{k=0}^{K-1} \|u_k\|^2}\right] \leq \sqrt{b_{-1}^2 + \rev{(\kappa_u^2 + \zeta^{-2} M)}K} = \kappa_8(K).
    \end{equation}
    Therefore, using \eqref{eq:alphadalb} with $\delta_1 = \delta_2/2$ and Markov's inequality with \eqref{eq:bkbound}, with probability at least $1-\delta_2$, by Assumption \ref{assum:Hbound}, \eqref{eq:adaconvergence}, and the union bound,
    \begin{equation*}
        \E\left[\sum_{k=0}^{K-1} (\uktrue)^T H_k \uktrue \right] \leq \frac{8 \kappa_8(K)(\nu \sigma q_{-1} + 2\kappa_7(K))\kappa_7(K)}{\taumin \nu^2 \eta \sigma \delta_2^2}.
    \end{equation*}
    Next, applying Lemma \ref{lem:ulb},
    \begin{align*}
        \E\left[\sum_{k=0}^{K-1} \|\nabla f(x_k) + J_k^T \yktrue\|^2\right]
        &\leq \frac{8 \kappa_H^2\kappa_8(K)(\nu \sigma q_{-1} + 2\kappa_7(K))\kappa_7(K)}{\taumin \nu^2 \eta \zeta \sigma \delta_2^2} \\
        &\quad+ \kappa_H^2(1 +2 \kappa_u) \kappa_v\E\left[\sum_{k=0}^{K-1}\|c_k\|_1\right].
    \end{align*}
    Noting that this result holds under the same event as in \eqref{eq:ccomplexityada} (with $\delta_1 = \delta_2/2$), it follows that with probability at least $1-\delta_2$,
    \begin{align*}
        \E\left[\frac{1}{K}\sum_{k=0}^{K-1} \|\nabla f(x_k) + J_k^T \yktrue\|^2\right]
        &\leq \frac{8 \kappa_H^2\kappa_8(K)(\nu \sigma q_{-1} + 2\kappa_7(K))\kappa_7(K)}{\taumin \nu^2 \eta \zeta \sigma \delta_2^2 K} \\
        &+\frac{4(\nu \sigma q_{-1} + 2\kappa_7(K))\kappa_H^2(1 +2 \kappa_u) \kappa_v \kappa_7(K) }{\nu^2 \sigma^2 \delta_2 K}.
    \end{align*}

    Finally, using \eqref{eq:adaconvergence}, \eqref{eq:alphadalb}, \eqref{eq:bkbound}, Markov's inequality and the union bound, with probability at least $1-\frac23\delta_3$,
    \begin{equation*}
        \begin{split}
        \E\left[\sum_{k=0}^{K-1} \taumin (\uktrue)^T H_k \uktrue + \frac{\sigma \kappa_8(K)}{\eta} \|c_k\|_1\right] \\
        \leq \frac{18\kappa_8(K)(\nu \sigma q_{-1} + 2\kappa_7(K))\kappa_7(K)}{\nu^2 \eta \sigma \delta_3^2}.
        \end{split}
    \end{equation*}
    Thus, by Lemma \ref{lem:ulb} 
    \begin{align*}
        &\E\left[\frac{1}{K}\sum_{k=0}^{K-1} \taumin \zeta \kappa_H^{-2} \|\nabla f(x_k) + J_k^T \yktrue\|^2 + \frac{\sigma \kappa_8(K)}{\eta} \|c_k\|_1\right] \\
        &\leq \frac{18\kappa_8(K)(\nu \sigma q_{-1} + 2\kappa_7(K))\kappa_7(K)}{\nu^2 \eta \sigma \delta_3^2 K} +\zeta \taumin (1 +2 \kappa_u) \kappa_v\E\left[\frac{1}{K}\sum_{k=0}^{K-1}\|c_k\|_1\right] \\
        &\leq \frac{18\kappa_8(K)(\nu \sigma q_{-1} + 2\kappa_7(K))\kappa_7(K)}{\nu^2 \eta \sigma \delta_3^2 K} \\
        &\quad+\frac{6(\nu \sigma q_{-1} + 2\kappa_7(K))\kappa_7(K) \zeta \taumin (1 +2 \kappa_u) \kappa_v}{\nu^2 \sigma^2 \delta_3 K}.
    \end{align*}
    Therefore, applying Markov's inequality, \rev{Jensen's inequality,} and the union bound, it follows that with probability at least $1-\delta_3$,
    \begin{align*}
        &\underset{k\in [0,K-1]}{\min} \ \taumin \zeta \kappa_H^{-2} \|\nabla f(x_k) + J_k^T \yktrue\|^2 + \frac{\sigma \kappa_8(K)}{\eta} \|c_k\|_1 \\
        &\leq \frac{54\kappa_8(K)(\nu \sigma q_{-1} + 2\kappa_7(K))\kappa_7(K)}{\nu^2 \eta \sigma \delta_3^3 K} \\
        &\quad+\frac{18(\nu \sigma q_{-1} + 2\kappa_7(K))\kappa_7(K) \zeta \taumin (1 +2 \kappa_u) \kappa_v}{\nu^2 \sigma^2 \delta_3^2 K}.
    \end{align*}
    \qed
\end{proof}

By the definitions of $\kappa_7(K) = \mathcal{O}(\log(K))$ and $\kappa_8(K) = \mathcal{O}(\sqrt{K})$, it follows that the results of Corollary \ref{cor:adacomplexity} match, up to log factors, those we derived in Section \ref{subsec:prechosen} for the pre-specified stepsize setting. Thus, in terms of the complexity measures \eqref{eq:complexity}, this variant of Algorithm \ref{alg:tsssqp} has a worst-case complexity of $\tilde{\mathcal{O}}(\epsilon_\ell^{-4})$ and $\tilde{\mathcal{O}}(\epsilon_{c}^{-1})$.

\section{Safeguarded Line Search} \label{sec:ls}

The convergence analysis in Section \ref{sec:analysis} specifies proper ranges for $\alpha_k$ in Algorithm \ref{alg:tsssqp} in order to ensure convergence, but does not provide any recommendations on how to choose $\alpha_k$ in this range. Commonly, in other stochastic SQP methods, the procedure used to set $\alpha_k$ incorporates the merit parameter $\tau_k$, which is adaptively estimated at each iteration. However, the estimation of $\tau_k$ may be highly inaccurate and noisy due to only having stochastic access to the gradient of $f$. For this reason, we do not attempt to rely on the stochastic gradient information in order to choose $\alpha_k$ and instead solely utilize the constraints.

Consider first the case where $\alpha_k$ satisfies $\alpha_k \in [\nu, \nu + \theta \beta_k]$ as it does in the analysis in Section \ref{subsec:prechosen}. Then, we can find an $\alpha_k$ in this range through a safeguarded backtracking procedure. Starting from $\hat{\alpha}_k = \nu + \theta \beta_k$, we backtrack until
\begin{equation} \label{eq:btsuccess}
    \|c(x_k + \hat{\alpha}_k d_k)\|_1 \leq (1-\xi \hat{\alpha}_k) \|c_k\|_1,
\end{equation}
holds for some $\xi \in (0,1)$ where, when \eqref{eq:btsuccess} fails to hold for $\hat{\alpha}_k$, we set $\hat{\alpha}_k = \rho \hat{\alpha}_k$ for some $\rho \in (0,1)$. However, as we cannot guarantee termination, we safeguard this linesearch by ceasing the search procedure if $\hat{\alpha}_k$ ever falls below $\nu$. When \eqref{eq:btsuccess} holds for some $\hat{\alpha}_k \geq \nu$, we set $\alpha_k = \hat{\alpha}_k$. On the other hand, if \eqref{eq:btsuccess} fails to hold prior to $\hat{\alpha}_k < \nu$, we instead set $\alpha_k = \nu$. Thus, this procedure is guaranteed to output an $\alpha_k$ in the specified range and therefore the convergence results of Section \ref{subsec:prechosen} hold. In addition, on any step where \eqref{eq:btsuccess} is satisfied for $\hat{\alpha}_k \geq \nu$, we have confirmation of sufficient decrease in the constraint violation. Finally, we note that the number of backtracking steps at any iteration $k$ is at most $\log(\nu/(\nu + \theta \beta_k))/\log(\rho)$ due to terminating the backtracking as soon as $\hat{\alpha}_k < \nu$.

Unfortunately, the convergence theory only holds for the previous procedure under certain conditions on $\nu$. To relax these conditions, we once again turn to the one of the adaptive stepsize rules of Section \ref{subsec:adaptive}. In particular, we consider the case where $\beta_k$ is chosen as a pre-specified sequence and the lower bound for $\alpha_k$ is chosen in a manner similar to that of \eqref{eq:adastep}. As we use a slight modification of this stepsize, we give the full procedure (which is the algorithm used in the computational results of Section \ref{sec:numerical}) in Algorithm \ref{alg:tsssqpls}.

The backtracking procedure in Algorithm \ref{alg:tsssqpls} is very similar to the one described above, with a few minor differences. In particular, we set the lower bound adaptively, using the stepsize rule in Section \ref{subsec:adaptive}. In addition, unlike in Section \ref{subsec:adaptive}, we only update the lower bound when the backtracking procedure fails to satisfy the sufficient decrease condition prior to reducing $\hat{\alpha}_k$ below the lower bound. The logic for this is simple; if the lower bound was reached, then it is probably too large and should be reduced. On the other hand, when the sufficient decrease condition is satisfied at iteration $k$, we keep the lower bound as it was at the start of this iteration, since it is already sufficiently small to find a good steplength in terms of reducing the constraint violation. \rev{We numerically explore the impact of only updating the lower bound when the line search fails to satisfy the sufficient decrease condition in Appendix \ref{app:tssqpa}.}

\begin{algorithm}[ht]
  \caption{Two Stepsize Stochastic SQP with Adaptive Backtracking}
  \label{alg:tsssqpls}
  \begin{algorithmic}[1]
    \Require $x_0 \in \R^{n}$, $\{\beta_k\} \subset \R_{>0}$, $\nu \in \R_{>0}$, $q_{-1} \in \R_{>0}$, $\theta \in \R_{>0}$, $\xi \in (0,1)$, $\rho \in (0,1)$;
    \For{$k=0,1,\dots$}
    \State Compute stochastic gradient $g_k$.
	  \State Compute $(p_k,y_k)$ as the solution of \eqref{eq:linsys}.
    \State Set $d_k \leftarrow v_k + \beta_k u_k$, where $v_k \in \text{Range}(J_k^T)$ and $u_k \in \text{Null}(J_k)$ are the orthogonal decomposition of $p_k$.
    \State Set $\hat{q}_k^2 \leftarrow q_{k-1}^2 + \|c_k\|_1$ and $\hat{\alpha}_k \leftarrow \frac{\nu}{\hat{q}_k} + \theta \beta_k$.
    \While{$\|c(x_k + \hat{\alpha}_k d_k)\|_1 > (1-\xi \hat{\alpha}_k) \|c_k\|_1$ and $\hat{\alpha}_k \geq \frac{\nu}{\hat{q}_k}$}
    \State Set $\hat{\alpha}_k \leftarrow \rho \hat{\alpha}_k$.
    \EndWhile
    \If{$\hat{\alpha}_k \rev{>} \frac{\nu}{\hat{q}_k}$}
    \State Set $\alpha_k \leftarrow \hat{\alpha}_k$ and $q_k = q_{k-1}$.
    \Else
    \State Set $\alpha_k \leftarrow \frac{\nu}{\hat{q}_k}$ and $q_k = \hat{q}_k$.
    \EndIf
    \State Set $x_{k+1} \leftarrow x_k + \alpha_k d_k$.
    \EndFor
  \end{algorithmic}
\end{algorithm}

While this is a relatively simple variant of Algorithm \ref{alg:tsssqp}, the analysis in Section \ref{sec:analysis} does not directly translate. We provide the following lemma which provides a starting point for the analysis that can then easily be combined with the techniques in Section \ref{sec:analysis} to obtain a worst-case complexity result.

\begin{lemma} \label{lem:safeguardedls1}
    Let Assumptions \ref{assum:fcsmooth}, \ref{assum:Hbound}, and \ref{assum:stochG} hold and let $x_k$ be generated by Algorithm \ref{alg:tsssqpls}. Let $\beta_k = \eta/\sqrt{K}$ hold for all $k$. Let
    \begin{equation} \label{eq:kappa9}
        \kappa_9 := \xi^{-1}(\|c_0\|_1 - (2 + \Gamma \kappa_v/2) \nu^2 \log(q_{-1}^2) + \nu^2 \eta^2 \Gamma \rev{(\kappa_u^2 + \zeta^{-2} M)}/(2 q_{-1}^2))
    \end{equation}
    and
    \begin{equation} \label{eq:kappa10}
        \kappa_{10} := 2(q_{-1} + \kappa_9/\nu) + 8(4 + \Gamma \kappa_v)\xi^{-1} \nu \log(e + (4 + \Gamma \kappa_v)\xi^{-1} \nu).
    \end{equation}
    
    Then, $\E[q_{K-1}] \leq \kappa_{10}$ and
    \begin{equation}
        \sum_{k=0}^{K-1} \E\left[\alpha_k \|c_k\|_1\right] \leq \kappa_9 + (4 + \Gamma \kappa_v) \xi^{-1} \nu \log(\kappa_{10})
    \end{equation}
\end{lemma}

\begin{proof}
    Let $\mathcal{K}_\alpha$ denote the index set of iterations $k$ such that $\alpha_k = \frac{\nu}{\hat{q}_k}$. Then, for any $k \in \mathcal{K}_\alpha$, by $\Gamma$-Lipschitz continuity of the Jacobian of $c$,
    \begin{align*}
        \|c(x_k + \alpha_k d_k)\|_1 - \|c_k\|_1 &\leq \|c_k + \alpha_k J_k d_k\|_1 - \|c_k\|_1 + \frac{\alpha_k^2 \Gamma}{2} \|d_k\|^2 \\
        &= |1-\alpha_k| \|c_k\|_1 - \|c_k\|_1 + \frac{\alpha_k^2 \Gamma}{2} \|d_k\|^2,
    \end{align*}
    where the equality follows from $J_k d_k = -c_k$. 
    \rev{Using the fact that \\$|1-\alpha_k| \leq 1 - \alpha_k + 2\alpha_k^2$, whenever $k \in \mathcal{K}_\alpha$, we have}
    \begin{equation*}
        \|c(x_k + \alpha_k d_k)\|_1 - \|c_k\|_1 \leq -\xi \alpha_k \|c_k\|_1 + 2\alpha_k^2\|c_k\|_1 + \frac{\alpha_k^2 \Gamma}{2} \|d_k\|^2,
    \end{equation*}
    where we used $\xi \in (0,1)$.

    Next, for any iteration where $k \in \mathcal{K}_\alpha^c$, it follows that
    \begin{equation*}
        \|c(x_k + \alpha_k d_k)\|_1 - \|c_k\|_1 \leq (1-\xi \alpha_k) \|c_k\|_1 - \|c_k\|_1 = -\xi \alpha_k \|c_k\|_1.
    \end{equation*}

    Combining these cases and summing this inequality for $k =0,\dots, K-1$, it follows that
    \begin{equation*}
        \|c(x_K)\|_1 - \|c_0\|_1 \leq -\sum_{k=0}^{K-1} \xi \alpha_k \|c_k\|_1 + \sum_{j \in \mathcal{K}_\alpha}  2 \alpha_j^2 \|c_j\|_1 + \frac{\alpha_j^2 \Gamma}{2} \|d_j\|^2.
    \end{equation*}
    Next, by the orthogonal decomposition $d_k = v_k + \beta_k u_k$ and Lemma \ref{lem:vbound}, we have
    \begin{align}
        &\|c(x_K)\|_1 - \|c_0\|_1 \nonumber \\
        &\leq -\sum_{k=0}^{K-1} \xi \alpha_k \|c_k\|_1 + \sum_{j \in \mathcal{K}_\alpha}  2 \alpha_j^2 \|c_j\|_1 + \frac{\alpha_j^2 \Gamma}{2} (\|v_j\|^2 + \beta_j^2 \|u_j\|^2) \nonumber \\
        &\leq -\sum_{k=0}^{K-1} \xi \alpha_k \|c_k\|_1 + \sum_{j \in \mathcal{K}_\alpha}  (2 + \Gamma \kappa_v/2) \alpha_j^2 \|c_j\|_1 + \frac{\alpha_j^2 \beta_j^2 \Gamma}{2} \|u_j\|^2 \nonumber \\
        &\leq -\sum_{k=0}^{K-1} \xi \alpha_k \|c_k\|_1 + \sum_{j \in \mathcal{K}_\alpha}  \frac{(2 + \Gamma \kappa_v/2) \nu^2}{q_{-1}^2 + \sum_{\ell \in \mathcal{K}_{\alpha}, \ell \leq j} \|c_{\ell}\|_1} \|c_j\|_1 + \frac{\nu^2 \beta_j^2 \Gamma}{2 q_{-1}^2} \|u_j\|^2, \label{eq:linesearchlemma}
    \end{align}
    where the final inequality follows by the definition of $\alpha_k$ for any $k \in \mathcal{K}_\alpha$. Next, taking the expectation of both sides of this inequality, using the definition of $\beta$, rearranging, and using the law of iterated expectation with the result of Lemma \ref{lem:stochasticbounds}, we have
     \begin{align}
        &\sum_{k=0}^{K-1} \E\left[\alpha_k \|c_k\|_1\right] \nonumber \\
        &\leq \xi^{-1} \|c_0\|_1 + \E\left[\sum_{j \in \mathcal{K}_\alpha}  \frac{\xi^{-1} (2 + \Gamma \kappa_v/2) \nu^2}{q_{-1}^2 + \sum_{\ell \in \mathcal{K}_{\alpha}, \ell \leq j} \|c_{\ell}\|_1} \|c_j\|_1 + \frac{\nu^2 \eta^2 \Gamma}{2 K  q_{-1}^2} \|u_j\|^2\right] \nonumber \\
        &\leq \xi^{-1} \|c_0\|_1 + \xi^{-1}(2 + \Gamma \kappa_v/2) \nu^2 \E\left[\log(q_{-1}^2 + \sum_{j \in \mathcal{K}_\alpha} \|c_j\|_1)-\log(q_{-1}^2)\right] \nonumber \\
        &\quad+ \frac{\nu^2 \eta^2 \xi^{-1} \Gamma \rev{(\kappa_u^2 + \zeta^{-2} M)}}{2 q_{-1}^2} \nonumber \\
        &\leq \xi^{-1} \|c_0\|_1 + \xi^{-1}(2 + \Gamma \kappa_v/2) \nu^2 (2\log(\E[q_{K-1}])-\log(q_{-1}^2)) \label{eq:btlemma1} \\
        &\quad+ \frac{\nu^2 \eta^2 \xi^{-1} \Gamma \rev{(\kappa_u^2 + \zeta^{-2} M)}}{2 q_{-1}^2}, \nonumber
    \end{align}
    where the second inequality follows by Lemma \ref{lem:logbound} and the final inequality follows by Jensen's inequality and the concavity of $\log(x)$.

    Therefore, since $\alpha_k \geq \nu/q_{K-1}$ for all $k \leq K-1$, it follows that
    \begin{equation*}
        \E\left[\sum_{k \in \mathcal{K}_{\alpha}} \frac{\|c_k\|_1}{q_{K-1}}\right] \leq \kappa_9/\nu + (4 + \Gamma \kappa_v) \xi^{-1} \nu \log(\E[q_{K-1}]).
    \end{equation*}
    Now, by the definition of $q_{K-1}$ and the fact that $x \leq a + b \log(x)$ implies $x \leq 2a + 8b \log(e + b)$ for any $a > 0$ and $b > 0$ \cite{BWang_HZhang_ZMa_WChen_2023},
    \begin{align*}
        \E[q_{K-1}] &= \E\left[\frac{q_{-1}^2 + \sum_{k \in \mathcal{K}_{\alpha}} \|c_k\|_1}{q_{K-1}}\right] \\
        &\leq q_{-1} + \kappa_9/\nu + (4 + \Gamma \kappa_v) \xi^{-1} \nu \log(\E[q_{K-1}]) \\
        &\leq 2(q_{-1} + \kappa_9/\nu) + 8(4 + \Gamma \kappa_v)\xi^{-1} \nu \log(e + (4 + \Gamma \kappa_v)\xi^{-1} \nu),
    \end{align*}
    proving the first result. Thus, by \eqref{eq:btlemma1}, it follows that
    \begin{equation*}
        \sum_{k=0}^{K-1} \E\left[\alpha_k \|c_k\|_1\right] \leq \kappa_9 + (4 + \Gamma \kappa_v) \xi^{-1} \nu \log(\E[q_{K-1}]) \leq \kappa_9 + (4 + \Gamma \kappa_v) \xi^{-1} \nu \log(\kappa_{10}).
    \end{equation*}
    \qed
\end{proof}

From this proof, we can see that we still obtain a convergence rate of $\mathcal{O}(1/K)$ in terms of the average constraint violation. In addition, as in Section \ref{subsec:prechosen}, any second order terms in the convergence analysis can be split into either terms involving $\beta_k^2$ or $\alpha_k^2 \|c_k\|_1$ terms. Since $\alpha_k$ is bounded from above by a constant, it should be clear by the prior lemma that the sum of the $\alpha_k^2 \|c_k\|_1$ terms are bounded, in expectation, by a constant factor. In addition, given the bound on $\E[q_{K-1}]$, we can combine the analysis of Sections \ref{subsec:prechosen} and \ref{subsec:adaptive} to derive a converegence result with a worst-case complexity of $\mathcal{O}(\epsilon_\ell^{-4})$ and $\mathcal{O}(\epsilon_c^{-1})$, matching the results of Section \ref{subsec:prechosen}. We leave the full complexity analysis as an exercise to the reader.

\rev{Finally, we repeat a similar lemma in the case where $\beta_k$ is chosen adaptively by \eqref{eq:adabq} and \eqref{eq:adastep}.}

\begin{lemma} \label{lem:safeguardedls2}
    \rev{Let Assumptions \ref{assum:fcsmooth}, \ref{assum:Hbound}, and \ref{assum:stochG} hold and let $x_k$ be generated by Algorithm \ref{alg:tsssqpls}. Let $\beta_k$ be defined as in \eqref{eq:adabq} and \eqref{eq:adastep} for all $k$. Let
    \begin{align} \label{eq:kappa11}
        \kappa_{11}(K) &:= \xi^{-1}(\|c_0\|_1 - (2 + \Gamma \kappa_v/2) \nu^2 \log(q_{-1}^2) \\
        &\quad+ \nu^2 \eta^2 \Gamma \log \left(1+(\kappa_u^2 + \zeta^{-2} M)K/b_{-1}^2 \right)/(2 q_{-1}^2)) \nonumber
    \end{align}
    and
    \begin{equation} \label{eq:kappa12}
        \kappa_{12}(K) := 2(q_{-1} + \kappa_{11}(K)/\nu) + 8(4 + \Gamma \kappa_v)\xi^{-1} \nu \log(e + (4 + \Gamma \kappa_v)\xi^{-1} \nu).
    \end{equation}}
    
    \rev{Then, $\E[q_{K-1}] \leq \kappa_{11}$ and
    \begin{equation}
        \sum_{k=0}^{K-1} \E\left[\alpha_k \|c_k\|_1\right] \leq \kappa_9 + (4 + \Gamma \kappa_v) \xi^{-1} \nu \log(\kappa_{10})
    \end{equation}}
\end{lemma}

\begin{proof}
    By the proof of Lemma \ref{lem:safeguardedls1}, \eqref{eq:linesearchlemma} holds in this case as well. Thus, taking the expectation of this inequality, rearranging, and applying Lemma \ref{lem:logbound} twice, we have,
    \begin{align}
        &\sum_{k=0}^{K-1} \E\left[\alpha_k \|c_k\|_1\right] \nonumber \\
        &\leq \xi^{-1} \|c_0\|_1 + \E\left[\sum_{j \in \mathcal{K}_\alpha}  \frac{\xi^{-1} (2 + \Gamma \kappa_v/2) \nu^2}{q_{-1}^2 + \sum_{\ell \in \mathcal{K}_{\alpha}, \ell \leq j} \|c_{\ell}\|_1} \|c_j\|_1 + \frac{\xi^{-1} \nu^2 \beta_j^2 \Gamma}{2  q_{-1}^2} \|u_j\|^2\right] \nonumber \\
        &\leq \xi^{-1} \|c_0\|_1 + \xi^{-1}(2 + \Gamma \kappa_v/2) \nu^2 \E\left[\log(q_{-1}^2 + \sum_{j \in \mathcal{K}_\alpha} \|c_j\|_1)-\log(q_{-1}^2)\right] \nonumber \\
        &\quad+ \frac{\xi^{-1} \nu^2 \eta^2 \Gamma}{2  q_{-1}^2}\sum_{j=0}^{K-1} \E\left[\log\left(\frac{b_{-1}^2 + \sum_{k=0}^{K-1} \|u_k\|^2}{b_{-1}^2}\right)\right] \nonumber \\
        &\leq \xi^{-1} \|c_0\|_1 + \xi^{-1}(2 + \Gamma \kappa_v/2) \nu^2 \eta^2 (2\log(\E[q_{K-1}])-\log(q_{-1}^2)) \label{eq:btlemma1} \\
        &\quad+ \frac{\xi^{-1} \nu^2 \Gamma \log \left(1+(\kappa_u^2 + \zeta^{-2} M)K/b_{-1}^2 \right)}{2 q_{-1}^2}, \nonumber
    \end{align}
    where the final inequality follows by \eqref{eq:adaptiveubound}. From here, the proof follows by an identical argument to that of Lemma \ref{lem:safeguardedls1} with replacement of $\kappa_9$ by $\kappa_{11}(K)$ and $\kappa_{10}$ by $\kappa_{12}(K)$, respectively. \qed
\end{proof}

\rev{Clearly, this proof shows that we still obtain an $\tilde{\mathcal{O}}(1/K)$ in terms of the average constraint violation, as was the case in Section \ref{subsec:adaptive}. Similarly to the previous case, one can directly use this lemma with the proofs in Section \ref{subsec:adaptive} to prove the analagous $\tilde{\mathcal{O}}(\epsilon_\ell^{-4})$ and $\tilde{\mathcal{O}}(\epsilon_c^{-1})$ complexity result.}

\section{Numerical Experiments} \label{sec:numerical}

In this section, we numerically validate the performance of our proposed algorithm. We focus our attention on Algorithm \ref{alg:tsssqpls}, as it is a fully specified version of the generic Algorithm \ref{alg:tsssqp}. We consider two settings, equality constrained problems from the CUTEst collection \cite{NIMGould_DOrban_PLToint_2015} with simulated noise and \rev{constrained logistic regression problems using datasets from the LIBSVM collection \cite{CChang_CLin_2011}}.

In both cases, we compare Algorithm \ref{alg:tsssqpls} with the Github implementation of Algorithm 3 (which we refer to as ``SSQP" throughout this section) in \cite{ASBerahas_FECurtis_DPRobinson_BZhou_2021} and use the parameter settings provided in \cite{ASBerahas_FECurtis_DPRobinson_BZhou_2021}\footnote{\url{https://github.com/frankecurtis/StochasticSQP}}, with the exception of $\theta$, which we set as $\theta = 10^4$ for fair comparison. \rev{We use a variant of this algorithm which finds $\alpha_k$ using the procedure described in Section 5.1 of \cite{ASBerahas_FECurtis_MJONeill_DPRobinson_2023}}. \rev{In addition, for the constrained logistic regression problems, we implemented the Momentum-based Linearized Augmented Lagrangian Method (MLALM) of \cite{QShi_XWang_HWang_2026}. This algorithm was not implemented for the CUTEst problems as it requires access to specific mini-batches, which is not available given the structure of the noise. We describe the parameter settings for this algorithm in more detail in Section \ref{subsec:lr}.}

\rev{We consider three variants of Algorithm \ref{alg:tsssqp}, Algorithm \ref{alg:tsssqpls} with a fixed stepsize $\beta$, which refer to as ``TSSQP" throughout this section, Algorithm \ref{alg:tsssqpls} with $\beta_k$ chosen adaptively by \eqref{eq:adabq} and \eqref{eq:adastep} (``TSSQPU"), and Algorithm \ref{alg:tsssqp} with $\alpha_k$ and $\beta_k$ chosen by \eqref{eq:adabq} and \eqref{eq:adastep}, where we always select the stepsize $\alpha_k = \frac{\nu}{q_k}$ (``TSSQPUV"). As mentioned in Section \ref{subsec:adaptive}, we use $q_k^2 = q_{k-1}^k + \min\{\|c_k\|_1, \|v_k\|, \|v_k\|^2\}$ when updating the lower bound for $\alpha_k$ as it significantly improves the numerical performance of the algorithms, without harming the complexity results, beyond constant factors. In order to not overwhelm the main body of the paper with numerical results, the findings for the fully adaptive methods (TSSQPU and TSSQPUV) are presented in Appendix \ref{app:adaptive}.}

For all of the TSSQP algorithms, we use the following default parameter settings $\nu = 1$, $q_{-1} = 10^{-9}$, $\theta = 10^4$, $\xi = 10^{-3}$ and $\rho = 1/2$, while $\beta_k$ is specified below based on the specific problem and algorithm variant in use. \rev{In all of our experiments, we choose $H_k$ to be the identity matrix and compute the decompostion by first computing $p_k$ directly by \eqref{eq:linsys} and then find $u_k$ by projecting $p_k$ on $\text{Null}(J_k)$ and $v_k$ by setting $v_k = p_k - u_k$.}

\rev{Additional experiments involving variants of Algorithm \ref{alg:tsssqp} and \ref{alg:tsssqpls} applied to these problems can be found in Appendix \ref{app:adaptive} and Appendix \ref{app:tssqpa}. Information regarding the wallclock time required to solve these problems is given in \ref{app:time}.}

\subsection{CUTEst Experiments} \label{subsec:cutest}

We consider the performance of Algorithm \ref{alg:tsssqpls} on a subset of the equality constrained problems from the CUTEst collection \cite{NIMGould_DOrban_PLToint_2015}. We follow the experimental setup of \cite{ASBerahas_FECurtis_DPRobinson_BZhou_2021} and select equality constrained optimization problems for which (i) $f$ is not a constant function, (ii) n$+m \leq 1000$ and (iii) the Jacobian of $c$ was non-singular at every iteration performed in our experiments. This selection resulted in a total of 60 problems, each of which has specified initial point, which we used in our experiments. We consider these problems at six different noise levels of $\epsilon_N \in \{10^{-5}, 10^{-4}, 10^{-3}, 10^{-2}, 10^{-1}, 1\}$. At iteration $k$, a stochastic gradient is generated such that $g_k \sim \mathcal{N}(\nabla f(x_k), \epsilon_N I)$. For each problem and noise level, we ran a total of 20 instances for each algorithm. For each instance, all algorithms were given a total budget of 1000 iterations \rev{or constraint evaluations, in order to control for the potentially additional effor imposed by the backtracking routine in Algorithm \ref{alg:tsssqpls}.}

For every trial performed, we computed a resulting feasibility and optimality error. If a trial produced a sufficiently feasible iterate in the sense that $\|c_k\|_\infty \leq 10^{-6}$ for some $k$, then, we report the feasibility
error as $\|c_k\|_\infty$ and the optimality error was reported as $\|\nabla f(x_k) + J_k^T \yktrue\|_\infty$, where $\yktrue$ was computed
as a least-squares multiplier using the true gradient $\nabla f(x_k)$ and $J_k$. (This ensures that the reported optimality error is not
based on a stochastic gradient and is instead an accurate measure of optimality corresponding to the iterate $x_k$.)
On the other hand, if no sufficiently feasible iterate was produced on a given run, then the feasibility error and optimality
error were computed using the same measures at the least infeasible iterate computed. In addition to terminating when the maximum iteration limit is reached, the algorithms were terminated if they ever computed a point $x_k$ which was both sufficiently feasible and the stationarity error was smaller than $10^{-4}$. Algorithms SSQP and TSSQP were tuned over 5 stepsizes, $\beta \in \{10^{-4}, 10^{-3}, 10^{-2}, 10^{-1}, 1\}$. \rev{For each problem and noise level, the best performing $\beta$ was found by choosing the parameter setting which yielded the lowest average infeasibility error, when no parameter acheived sufficient feasibility on average, and the best average optimality error among sufficiently feasible solutions, when sufficient feasibility was acheived.} The results of this experiment are presented in Figure \ref{fig:boxplots}.

\begin{figure}[t]
\hspace{-1.5cm}
\includegraphics[width=1.25\textwidth]{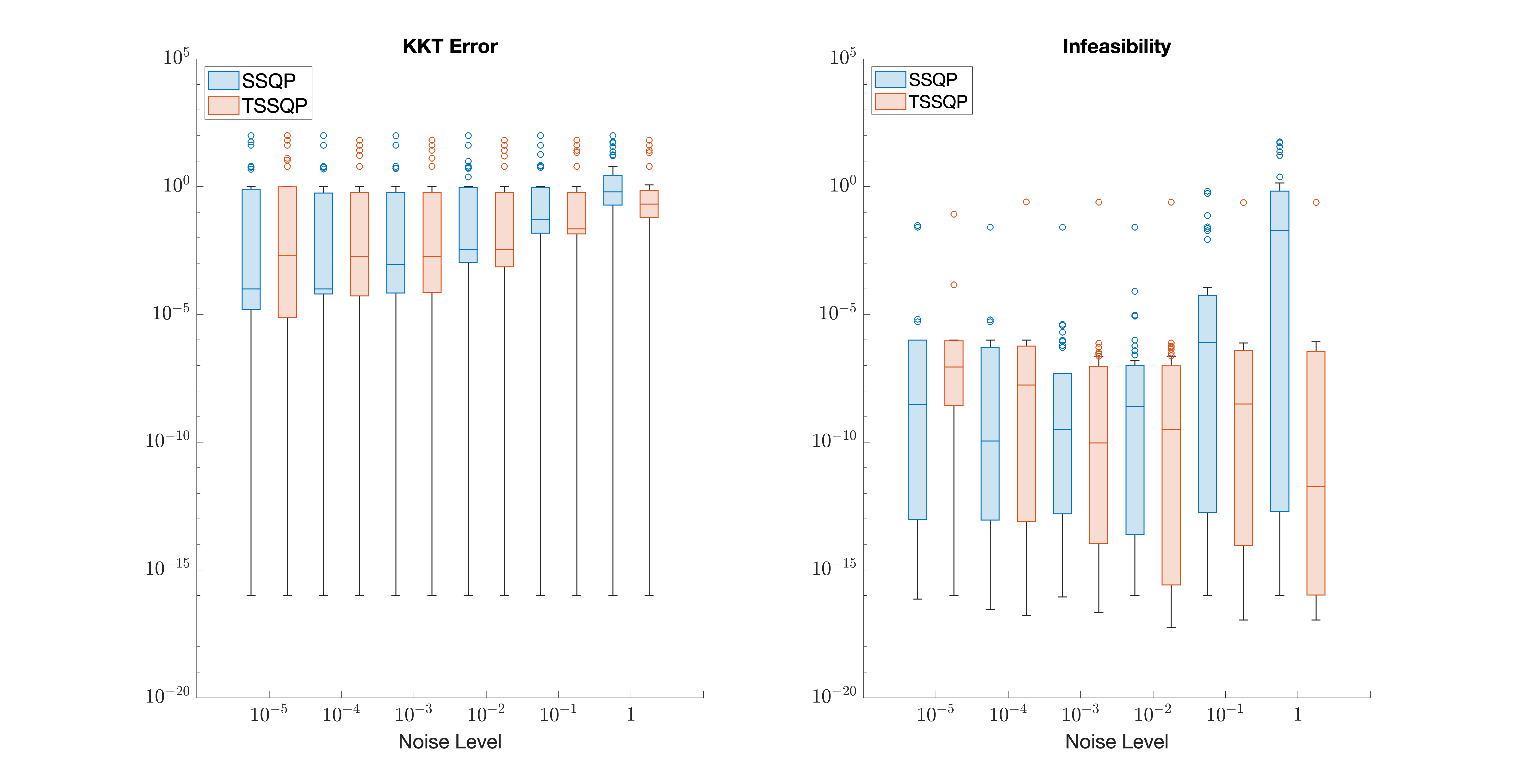}
\caption{Box plots of optimality error (left) and feasibility error (right) across various noise levels on CUTEst problems. SSQP is Algorithm 3 of \cite{ASBerahas_FECurtis_DPRobinson_BZhou_2021} and TSSQP is Algorithm \ref{alg:tsssqpls}.}
\label{fig:boxplots}
\end{figure}

As we can see from this plot, the computed stationarity error are relatively similar between these algorithms across all noise levels, with SSQP slightly outperforming TSSQP in stationarity error when the noise level is lower. This may be attributed to SSQP using an estimate of the merit parameter $\tau$, which is more likely to be well-behaved in a low noise setting. \rev{However, once the noise level increases to $\epsilon_N = 10^{-2}$, the gap between these algorithms vanishes for the stationarity error and is eventually in favor of TSSQP. On the other hand, when the noise level is low, these algorithms perform similarly in terms of the infeasibility error, with SSQP slightly outperforming TSSQP. However, as the noise level increases, the performance of SSQP degrades significantly with respect to infeasibility, while the performance of TSSQP is largely unchanged. We view this as confirmation of our theoretical results as it demonstrates the superior ability of TSSQP to converge with respect to constraint violation while having minimal to no impact on its ability converge with respect to the KKT error.}

\subsection{\rev{Constrained Logistic Regression}} \label{subsec:lr}

In this subsection, we consider equality constrained logistic regression problems of the form
\begin{equation}
    \underset{x \in \mathbb{R}^n}{\min} \ f(x) = \frac{1}{N} \sum_{i=1}^N \log\left(1 + e^{-y_i (X_i^T x)} \right) \ \text{s.t.} \ Ax = b, \ \|x\|_2^2 = 1,
\end{equation}
where $X \in \mathbb{R}^{n\times N}$ contains feature data of $N$ data points (with $X_i$ representing the $i$-th column of $X$), $y_i \in \{-1,1\}^N$ contains label data, and $A \in \mathbb{R}^{m \times n}$ and $b \in \mathbb{R}^m$. For the datasets $(X, y)$, we consider all binary classification datasets from the LIBSVM collection \cite{CChang_CLin_2011} for which $22 \leq n \leq 1000$ and $202 \leq N \leq 100000$, which resulted in 9 datasets. For datasets with multiple versions, e.g., the \{a1a,\dots, a9a\} datasets, we consider only the largest version.) The names and sizes of the datasets are given in Table \ref{tbl:lrprobs}. For the linear constraints, we chose $m = 10$ across all problems and generated $A$ and $b$ randomly with entries drawn from the standard normal distribution. The initial vector was chosen to be randomly distributed on the ball of norm $10^{-4}$, a small, random initialization.

\begin{table}[ht]
  \caption{Names and sizes of datasets. (Source: \cite{CChang_CLin_2011}.)}
  \label{tbl:lrprobs}
  \centering
  {\footnotesize
\begin{tabular}{lcc}\toprule
\textbf{Dataset}    & \textbf{Dimension ($n$)} & \textbf{ Datapoints ($N$)} \\ \hline
\texttt{a9a}             & $123$                          & $32,561$                                                                             \\
\texttt{ijcnn1}          & $22$                           & $49,990$                                                                             \\
\texttt{ionosphere}      & $34$                           & $351$                                                                                \\
\texttt{madelon}         & $500$                          & $2,000$                                                                              \\
\texttt{mushrooms}       & $112$                          & $8,124$                                                                              \\
\texttt{phising}         & $68$                           & $11,055$                                                                             \\
\texttt{sonar}           & $60$                           & $208$                                                                                \\
\texttt{splice}          & $60$                           & $1,000$                                                                              \\
\texttt{w8a}             & $300$                          & $49,749$       \\
\hline
\end{tabular}}
\end{table}

\begin{table}[tb]
  \caption{Average feasibility error, along with 95\% confidence intervals, of MLALM, SSQP, and TSSQP.  The results for the best-performing algorithm are shown in bold.}
  \label{tbl:logistic_regression1}
\centering
{\tiny
\begin{tabular}{l|c|c|c|c} \toprule
                                &                                 & \multicolumn{1}{c|}{\begin{tabular}[c]{@{}c@{}}MLALM\end{tabular}} & \multicolumn{1}{c|}{\begin{tabular}[c]{@{}c@{}}SSQP\end{tabular}} & \multicolumn{1}{c}{\begin{tabular}[c]{@{}c@{}}TSSQP \end{tabular}}                               \\ \hline
\multicolumn{1}{l|}{dataset}  & \multicolumn{1}{c|}{batch} & \multicolumn{1}{c|}{Feasibility}          & \multicolumn{1}{c|}{Feasibility}              & \multicolumn{1}{c}{Feasibility} \\ \hline
\texttt{a9a} & 16                     &       $1.46e-07 \pm 7.19e-08             $                    &     $      1.32e-05 \pm 2.99e-12          $                                                                 &     $ \pmb{3.95e-08 \pm 2.77e-16}           $                                                  \\
\texttt{a9a} & 128                     &      $\pmb{1.51e-07 \pm 2.81e-08}           $                      &        $   1.48e-05 \pm 1.12e-10        $                                                                  &     $ 5.25e-07 \pm 3.07e-15            $                                             \\ \hline
\texttt{ijccn1} & 16                     &    $   5.02e-07 \pm 1.10e-07           $                      &        $   1.20e-06 \pm 1.17e-13           $                                                                 &   $   \pmb{5.17e-09 \pm 1.22e-16}          $                                            \\
\texttt{ijccn1} & 128                     &   $    8.82e-07 \pm 5.55e-08            $                     &        $  1.20e-06 \pm 8.90e-14            $                                                             &    $  \pmb{1.03e-07 \pm 1.33e-16}            $                                  \\ \hline
\texttt{ionosphere} & 16                     &    $   3.67e-07 \pm 1.49e-07         $                        &         $  1.20e-03 \pm 5.75e-07        $                                                                   &      $  \pmb{6.90e-08 \pm 9.06e-16}        $                                   \\ 
\texttt{ionosphere} & 128                     &    $  2.39e-03 \pm 2.39e-04        $                         &       $    1.79e-03 \pm 3.04e-09    $                                                                        &   $   \pmb{4.49e-08 \pm 9.46e-17}  $                                       \\ \hline
\texttt{madelon} & 16                     &     $  \pmb{3.73e-07 \pm 1.56e-07}            $                      &       $    9.76e-01 \pm 3.02e-03           $                                                                &      $2.79e-04 \pm 1.44e-04           $                               \\
\texttt{madelon} & 128                     &      $ 7.36e-02 \pm 5.84e-02         $                        &       $    9.91e-01 \pm 1.03e-03      $                                                                      &   $  \pmb{ 1.31e-04 \pm 7.12e-08  }   $                                          \\ \hline
\texttt{mushrooms} & 16                     &    $   8.32e-07 \pm 5.35e-08        $                         &      $     1.00e-06 \pm 1.89e-12       $                                                                    &     $ \pmb{7.99e-08 \pm 3.06e-15}   $                                        \\
\texttt{mushrooms} & 128                     &   $    \pmb{8.59e-07 \pm 5.47e-08}     $                            &     $      1.52e-06 \pm 3.62e-12     $                                                                       & $      8.67e-07 \pm 4.00e-14  $                                          \\ \hline
\texttt{phishing} & 16                     &     $  1.01e-07 \pm 1.31e-07         $                        &         $  1.16e-06 \pm 1.14e-12         $                                                                   &    $   \pmb{6.01e-08 \pm 2.66e-16}   $                                        \\
\texttt{phishing} & 128                     &     $  \pmb{5.10e-07 \pm 1.50e-07}      $                           &      $     4.56e-06 \pm 3.67e-12   $                                                                        &   $    9.37e-07 \pm 2.33e-14   $                                          \\ \hline
\texttt{sonar} & 16                     &      $ 3.84e-03 \pm 4.68e-04 $                       &         $  3.35e-03 \pm 4.86e-09       $                                                                     &     $ \pmb{8.59e-10 \pm 1.51e-16}    $                                    \\
\texttt{sonar} & 128                     &    $   3.41e-01 \pm 1.22e-04      $                          &         $  2.35e-03 \pm 4.54e-09      $                                                                     &    $  \pmb{2.60e-06 \pm 1.04e-16}   $                                           \\ \hline
\texttt{splice} & 16                     &      $ \pmb{6.78e-07 \pm 1.01e-07}        $                         &         $  3.24e-03 \pm 2.06e-07      $                                                                      &     $\pmb{6.82e-07 \pm 2.01e-14}    $                                          \\
\texttt{splice} & 128                     &    $   1.61e-02 \pm 1.95e-04      $                           &       $    3.91e-03 \pm 3.24e-09    $                                                                       &    $  \pmb{5.46e-07 \pm 1.85e-14}  $                                           \\ \hline
\texttt{w8a} & 16                     &      $ 9.07e-07 \pm 4.67e-08      $                           &         $  4.87e-06 \pm 4.79e-13      $                                                                  &      $ \pmb{2.43e-08 \pm 1.94e-16}        $                                      \\
\texttt{w8a} & 128                     &     $  6.65e-07 \pm 9.64e-08    $                             &      $     5.31e-06 \pm 1.06e-12    $                                                                       &   $   \pmb{3.04e-07 \pm 3.92e-16}    $                                        \\ \hline
\end{tabular}}

  \caption{Average stationarity error, along with 95\% confidence intervals, of MLALM, SSQP, and TSSQP.  The results for the best-performing algorithm are shown in bold.}
  \label{tbl:logistic_regression2}
\centering
{\tiny
\begin{tabular}{l|c|c|c|c} \toprule
                                &                                 & \multicolumn{1}{c|}{\begin{tabular}[c]{@{}c@{}}MLALM\end{tabular}} & \multicolumn{1}{c|}{\begin{tabular}[c]{@{}c@{}}SSQP\end{tabular}} & \multicolumn{1}{c}{\begin{tabular}[c]{@{}c@{}}TSSQP \end{tabular}}                               \\ \hline
\multicolumn{1}{l|}{dataset}  & \multicolumn{1}{c|}{batch} & \multicolumn{1}{c|}{Stationarity}          & \multicolumn{1}{c|}{Stationarity}              & \multicolumn{1}{c}{Stationarity} \\ \hline
\texttt{a9a} & 16                     &       $7.98e-02 \pm 3.70e-02             $                    &     $      4.30e-02 \pm 3.61e-08          $                                                                 &     $ \pmb{2.92e-02 \pm 2.25e-10}           $                                                  \\
\texttt{a9a} & 128                     &      $6.19e-02 \pm 1.13e-03           $                      &        $   \pmb{1.07e-02 \pm 5.98e-07}        $                                                                  &     $ 1.16e-02 \pm 3.80e-10           $                                             \\ \hline
\texttt{ijccn1} & 16                     &    $   1.94e-01 \pm 2.20e-03           $                      &        $   \pmb{4.82e-02 \pm 2.68e-08}          $                                                                 &   $   4.88e-02 \pm 9.18e-15         $                                            \\
\texttt{ijccn1} & 128                     &   $    2.03e-01 \pm 7.47e-04            $                     &        $  1.59e-02 \pm 6.46e-11           $                                                             &    $  \pmb{9.10e-03 \pm 3.62e-13}            $                                  \\ \hline
\texttt{ionosphere} & 16                     &    $   1.77e-01 \pm 4.16e-02         $                        &         $  \pmb{7.22e-02 \pm 5.00e-05}        $                                                                   &      $  1.03e-01 \pm 7.93e-10       $                                   \\ 
\texttt{ionosphere} & 128                     &    $  1.44e-01 \pm 2.00e-02       $                         &       $    \pmb{5.20e-02 \pm 5.88e-08}    $                                                                        &   $   6.92e-02 \pm 4.21e-10  $                                       \\ \hline
\texttt{madelon} & 16                     &     $  5.38e+03 \pm 8.23e+00            $                      &       $    \pmb{1.88e+02 \pm 3.69e+01}           $                                                                &      $\pmb{1.57e+02 \pm 4.59e+01}          $                               \\
\texttt{madelon} & 128                     &      $ 4.76e+03 \pm 6.53e+02         $                        &       $    2.19e+02 \pm 5.13e+01      $                                                                      &   $  \pmb{ 8.79e+01 \pm 3.02e-03 }   $                                          \\ \hline
\texttt{mushrooms} & 16                     &    $   1.27e-01 \pm 1.47e-04        $                         &      $     3.83e-03 \pm 1.45e-08       $                                                                    &     $ \pmb{3.81e-03 \pm 1.56e-10}   $                                        \\
\texttt{mushrooms} & 128                     &   $    1.32e-01 \pm 4.39e-04     $                            &     $      2.84e-03 \pm 9.11e-09     $                                                                       & $      \pmb{2.48e-03 \pm 5.33e-10}  $                                          \\ \hline
\texttt{phishing} & 16                     &     $  1.76e-01 \pm 9.98e-05       $                        &         $  1.10e-02 \pm 1.07e-08         $                                                                   &    $   \pmb{7.30e-03 \pm 7.03e-11}   $                                        \\
\texttt{phishing} & 128                     &     $  1.76e-01 \pm 4.04e-05      $                           &      $     8.20e-03 \pm 4.23e-09   $                                                                        &   $    \pmb{4.04e-03 \pm 8.70e-11}   $                                          \\ \hline
\texttt{sonar} & 16                     &      $ 3.16e-01 \pm 6.76e-02 $                       &         $  \pmb{1.03e-01 \pm 1.25e-07}      $                                                                     &     $ 1.17e-01 \pm 1.37e-10    $                                    \\
\texttt{sonar} & 128                     &    $   8.52e-01 \pm 1.11e-02      $                          &         $  \pmb{2.80e-02 \pm 3.29e-08}      $                                                                     &    $  1.68e-01 \pm 1.38e-09   $                                           \\ \hline
\texttt{splice} & 16                     &      $ 4.28e+00 \pm 5.94e-01        $                         &         $  1.06e+00 \pm 3.09e-06      $                                                                      &     $ \pmb{4.15e-01 \pm 1.11e-08}   $                                          \\
\texttt{splice} & 128                     &    $   \pmb{4.08e-01 \pm 1.03e-02}      $                           &       $    1.92e-01 \pm 7.93e-09    $                                                                       &    $  \pmb{3.97e-01 \pm 5.03e-09}  $                                           \\ \hline
\texttt{w8a} & 16                     &      $ 2.07e-01 \pm 1.49e-03      $                           &         $  8.90e-03 \pm 2.49e-08      $                                                                  &      $ \pmb{5.46e-03 \pm 1.05e-11}        $                                      \\
\texttt{w8a} & 128                     &     $  2.02e-01 \pm 1.33e-03    $                             &      $     \pmb{3.07e-03 \pm 3.28e-09}    $                                                                       &   $   3.09e-03 \pm 7.04e-11    $                                        \\ \hline
\end{tabular}}

\end{table}

For each dataset, we considered two noise levels, where the level is dictated by the mini-batch size of each stochastic gradient estimate. For all problems, we used mini-batch sizes of 16 and 128. For each dataset and mini-batch size, we ran 20 instances with different
random seeds. A budget of 10 epochs (i.e., number of effective passes over the dataset) was used for all SQP methods. For fair comparison, MLALM was given a budget of 300 epochs for each problem and batchsize, as it does not require linear system solves to generate search directions. This additional budget made MLALM the most expensive method in terms of time, see Appendix \ref{app:time} for more details.

Similarly to the CUTEst problems, we considered 5 stepsizes for SSQP and TSSQP $\{10^{-4}, 10^{-3}, 10^{-2}, 10^{-1}, 1\}$ while the adaptive methods were run with $\eta = 1$. For MLALM, following the prescriptions in \cite{QShi_XWang_HWang_2026}, we set $\beta = \rho = K^{1/4}$, where $K$ is total number of iterations to be performed. We consider 6 possible values for $\alpha = \{0, 0.2, 0.4, 0.6, 0.8, 1\}$ and four possible stepsize choices $\eta = \{10^{-5} K^{-1/4}, 10^{-4}K^{-1/4}, 10^{-3} K^{-1/4}, 10^{-2}K^{-1/4}\}$, given a total of 24 different parameter settings. We follow a similar procedure as in the CUTEst experiments for reporting infeasibility and optimality error, with the caveat that we only record points at the end of each epoch. As before, if a succifiently feasible iterate is found ($\|c_k\|_\infty \leq 10^{-6}$), then we report the sufficiently feasible point with the lowest optimality error, $\|\nabla f(x_k) + J_k^T \yktrue\|_\infty$. Otherwise, we report the least infeasible point. We chose the best parameter setting for each algorithm and batch size via the same procedure as in Section \ref{subsec:cutest}.

The results of these experiments can be seen in Tables \ref{tbl:logistic_regression1} and \ref{tbl:logistic_regression2}. With respect to convergence in feasibility, TSSQP and MLALM perform the best, with TSSQP being the top performing algorithm more frequently than MLALM (13 times vs 4 times, with 1 tie). While not far behind in most instance, SSQP is never the top performing method. On the other hand, with respect to stationarity, SSQP and TSSQP are the top two performing methods, with TSSQP being the best algorithm 10 times versus 7 times for SSQP (with 1 tie). The MLALM algorithm is only the top performing algorithm with respect to feasibility on one instance. From these results, we conclude that the TSSQP method is the most effective overall as it performs well with respect to both measures the most frequently.

\section{Conclusion} \label{sec:conclusion}

In this paper, we propose and analyze a new SQP method for equality constrained optimization with a stochastic objective function. The algorithm uses a stepsize splitting scheme in order to improve upon the worst-case complexity of recently proposed stochastic SQP methods. We show that the proposed method matches the rate of convergence of a determinstic SQP method in terms of constraint violation and obtains the optimal rate for a stochastic method in terms of the gradient of the Lagrangian.

There are number of possible directions of future research. Fundamentally, this stepsize splitting scheme can be incorporated into any of the previously proposed stochastic SQP methods in the literature, including those for rank deficient Jacobians, inequality constraints, and inexact subproblem solutions. \rev{Extending the algorithm to incorporate inexact subproblem solutions could likely be done by enforcing conditions similar to those derived in \cite{FECurtis_DPRobinson_BZhou_2024A} and handling the additional error terms. Such an extension is outside the scope of the current manuscript.}


\bibliographystyle{siamplain}
\bibliography{references}

@article{ YArjevani_YCarmon_JCDuchi_DJFoster_NSrebro_BWoodworth_2023,
  title={Lower bounds for non-convex stochastic optimization},
  author={Y. {Arjevani} and Y. {Carmon} and J. C. {Duchi} and D. J. {Foster} and N. {Srebro} and V. {Woodworth}},
  journal={Mathematical Programming},
  volume={199},
  number={1},
  pages={165--214},
  year={2023},
  publisher={Springer}
}

@article{ ASBerahas_RBollapragada_BZhou_2022,
  title={An adaptive sampling sequential quadratic programming method for equality constrained stochastic optimization},
  author={A. S. {Berahas} and R. {Bollapragada} and B. {Zhou}},
  journal={arXiv preprint arXiv:2206.00712},
  year={2022}
}

@article{ ASBerahas_FECurtis_MJONeill_DPRobinson_2023,
  title={A stochastic sequential quadratic optimization algorithm for nonlinear-equality-constrained optimization with rank-deficient Jacobians},
  author={A. S. {Berahas} and F. E. {Curtis} and M. J. {O'Neill} and D. P. {Robinson}},
  journal={Mathematics of Operations Research},
  year={2023},
  publisher={INFORMS}
}

@article{ ASBerahas_FECurtis_DPRobinson_BZhou_2021,
  title={Sequential Quadratic Optimization for Nonlinear Equality Constrained Stochastic Optimization},
  author={A. S. {Berahas} and F. E. {Curtis} and D. P. {Robinson} and B. {Zhou}},
  journal={SIAM Journal on Optimization},
  volume={31},
  number={2},
  pages={1352--1379},
  year={2021},
  publisher={SIAM}
}

@article{ ASBerahas_JShi_ZYi_BZhou_2023,
  title={Accelerating stochastic sequential quadratic programming for equality constrained optimization using predictive variance reduction},
  author={A. S. {Berahas} and J. {Shi} and Z. {Yi} and B. {Zhou}},
  journal={Computational Optimization and Applications},
  volume={86},
  number={1},
  pages={79--116},
  year={2023},
  publisher={Springer}
}

@book{ DPBertsekas_1998,
  title={Network optimization: continuous and discrete models},
  author={Bertsekas, Dimitri P},
  year={1998},
  publisher={Athena Scientific Belmont}
}

@book{ JTBetts_2010,
  title={Practical methods for optimal control and estimation using nonlinear programming},
  author={Betts, John T},
  year={2010},
  publisher={SIAM}
}

@article{ LBottou_FECurtis_JNocedal_2018,
  title={Optimization methods for large-scale machine learning},
  author={L. {Bottou} and F. E. {Curtis} and J. {Nocedal}},
  journal={SIAM review},
  volume={60},
  number={2},
  pages={223--311},
  year={2018},
  publisher={SIAM}
}

@inproceedings{ CChen_FTung_NVedula_2018,
  title={Constraint-aware deep neural network compression},
  author={Chen, Changan and Tung, Frederick and Vedula, Naveen and Mori, Greg},
  booktitle={Proceedings of the European Conference on Computer Vision (ECCV)},
  pages={400--415},
  year={2018}
}

@article{SCuomo_DCVincenzoSchiano_FGiampaolo_GRozza_MRaissi_FPiccialli_2022,
  title={Scientific machine learning through physics--informed neural networks: Where we are and what’s next},
  author={S. {Cuomo} and V.S. {Di Cola} and F. {Giampaolo} and G. {Rozza} and M. {Raissi} and F. {Piccialli}},
  journal={Journal of Scientific Computing},
  volume={92},
  number={3},
  pages={88},
  year={2022},
  publisher={Springer}
}

@article{ FECurtis_MJONeill_DPRobinson_2024,
  title={Worst-case complexity of an SQP method for nonlinear equality constrained stochastic optimization},
  author={F. E. {Curtis} and M. J. {O'Neill} and D. P. {Robinson}},
  journal={Mathematical Programming},
  volume={205},
  number={1},
  pages={431--483},
  year={2024},
  publisher={Springer}
}

@article{FECurtis_DPRobinson_BZhou_2024A,
  title={A stochastic inexact sequential quadratic optimization algorithm for nonlinear equality-constrained optimization},
  author={Curtis, Frank E and Robinson, Daniel P and Zhou, Baoyu},
  journal={INFORMS Journal on Optimization},
  volume={6},
  number={3-4},
  pages={173--195},
  year={2024},
  publisher={INFORMS}
}

@article{FECurtis_DPRobinson_BZhou_2024B,
  title={Sequential quadratic optimization for stochastic optimization with deterministic nonlinear inequality and equality constraints},
  author={Curtis, Frank E and Robinson, Daniel P and Zhou, Baoyu},
  journal={SIAM Journal on Optimization},
  volume={34},
  number={4},
  pages={3592--3622},
  year={2024},
  publisher={SIAM}
}

@article{ JDuchi_EHazan_YSinger_2011,
  title={Adaptive subgradient methods for online learning and stochastic optimization.},
  author={J. {Duchi} and E. {Hazan} and Y. {Singer}},
  journal={Journal of machine learning research},
  volume={12},
  number={7},
  year={2011}
}

@article{ LJin_XWang_2022,
  title={A stochastic primal-dual method for a class of nonconvex constrained optimization},
  author={L. {Jin} and X. {Wang}},
  journal={Computational Optimization and Applications},
  volume={83},
  number={1},
  pages={143--180},
  year={2022},
  publisher={Springer}
}

@article{ FSKupfer_EWSachs_1992,
  title={Numerical solution of a nonlinear parabolic control problem by a reduced SQP method},
  author={Kupfer, FS and Sachs, Ekkehard W},
  journal={Computational Optimization and Applications},
  volume={1},
  number={1},
  pages={113--135},
  year={1992},
  publisher={Springer}
}

@article{ BHMcMahan_MStreeter_2010,
  title={Adaptive bound optimization for online convex optimization},
  author={B. H. {McMahan} and M. {Streeter}},
  journal={arXiv preprint arXiv:1002.4908},
  year={2010}
}

@article{ YNandwani_APathak_PSingla_2019,
  title={A primal dual formulation for deep learning with constraints},
  author={Y. {Nandwani} and A. {Pathak} and P. {Singla}},
  journal={Advances in neural information processing systems},
  volume={32},
  year={2019}
}

@article{ SNa_MAnitescu_MKolar_2023A,
  title={An adaptive stochastic sequential quadratic programming with differentiable exact augmented lagrangians},
  author={S. {Na} and M. {Anitescu} and M. {Kolar}},
  journal={Mathematical Programming},
  volume={199},
  number={1},
  pages={721--791},
  year={2023},
  publisher={Springer}
}

@article{ SNa_MAnitescu_MKolar_2023B,
  title={Inequality constrained stochastic nonlinear optimization via active-set sequential quadratic programming},
  author={S. {Na} and M. {Anitescu} and M. {Kolar}},
  journal={Mathematical Programming},
  volume={202},
  number={1},
  pages={279--353},
  year={2023},
  publisher={Springer}
}

@book{ JNocedal_SJWright_2006,
  title={Numerical optimization},
  author={Nocedal, Jorge and Wright, Stephen J.},
  year={2006},
  publisher={Springer Science \& Business Media}
}

@article{ TRees_HSDollar_AJWathen_2010,
  title={Optimal solvers for PDE-constrained optimization},
  author={Rees, Tyrone and Dollar, H Sue and Wathen, Andrew J},
  journal={SIAM Journal on Scientific Computing},
  volume={32},
  number={1},
  pages={271--298},
  year={2010},
  publisher={SIAM}
}

@article{ MRaissi_PPerdikaris_GEKarniadakis_2019,
  title={Physics-informed neural networks: A deep learning framework for solving forward and inverse problems involving nonlinear partial differential equations},
  author={M. {Raissi} and P. {Perdikaris} and G. E. {Karniadakis}},
  journal={Journal of Computational physics},
  volume={378},
  pages={686--707},
  year={2019},
  publisher={Elsevier}
}

@article{QShi_XWang_HWang_2026,
  title={A momentum-based linearized augmented lagrangian method for nonconvex constrained stochastic optimization},
  author={Shi, Qiankun and Wang, Xiao and Wang, Hao},
  journal={Mathematics of Operations Research},
  volume={51},
  number={1},
  pages={92--133},
  year={2026},
  publisher={INFORMS}
}

@article{ RWard_XWu_LBottou_2020,
  title={Adagrad stepsizes: Sharp convergence over nonconvex landscapes},
  author={R. {Ward} and X. {Wu} and L. {Bottou}},
  journal={Journal of Machine Learning Research},
  volume={21},
  number={219},
  pages={1--30},
  year={2020}
}

@inproceedings{ BWang_HZhang_ZMa_WChen_2023,
  title={Convergence of adagrad for non-convex objectives: Simple proofs and relaxed assumptions},
  author={B. {Wang} and H. {Zhang} and Z. {Ma} and W. {Chen}},
  booktitle={The Thirty Sixth Annual Conference on Learning Theory},
  pages={161--190},
  year={2023},
  organization={PMLR}
}

@article{JBolte_EPauwels_2016,
  title={Majorization-minimization procedures and convergence of SQP methods for semi-algebraic and tame programs},
  author={Bolte, J{\'e}r{\^o}me and Pauwels, Edouard},
  journal={Mathematics of Operations Research},
  volume={41},
  number={2},
  pages={442--465},
  year={2016},
  publisher={Informs}
}

@article{FFacchinei_VKungurtsev_LLampariello_GScutari_2021,
  title={Ghost penalties in nonconvex constrained optimization: Diminishing stepsizes and iteration complexity},
  author={Facchinei, Francisco and Kungurtsev, Vyacheslav and Lampariello, Lorenzo and Scutari, Gesualdo},
  journal={Mathematics of Operations Research},
  volume={46},
  number={2},
  pages={595--627},
  year={2021},
  publisher={Informs}
}

@article{ NIMGould_DOrban_PLToint_2015,
  title={CUTEst: a constrained and unconstrained testing environment with safe threads for mathematical optimization},
  author={Gould, Nicholas IM and Orban, Dominique and Toint, Philippe L},
  journal={Computational optimization and applications},
  volume={60},
  number={3},
  pages={545--557},
  year={2015},
  publisher={Springer}
}

@article{CChang_CLin_2011,
  title={LIBSVM: A library for support vector machines},
  author={Chang, Chih-Chung and Lin, Chih-Jen},
  journal={ACM transactions on intelligent systems and technology (TIST)},
  volume={2},
  number={3},
  pages={1--27},
  year={2011},
  publisher={Acm New York, NY, USA}
}

@article{ CCartis_NIMGould_PLToint_2013,
  title={On the evaluation complexity of cubic regularization methods for potentially rank-deficient nonlinear least-squares problems and its relevance to constrained nonlinear optimization},
  author={Cartis, Coralia and Gould, Nicholas IM and Toint, Philippe L},
  journal={SIAM Journal on Optimization},
  volume={23},
  number={3},
  pages={1553--1574},
  year={2013},
  publisher={SIAM}
}

\appendix
\section{\rev{Numerical Performance of Adaptive Methods}} \label{app:adaptive}

In this appendix, we present our numerical results for the adaptive methods TSSQPU and TSSQPUV. For reference, we also include TSSQP in the results. We note that TSSQPU and TSSQPUV are fully adaptive in the sense that only default parameter settings were used ($\eta = 1$, $\nu = 1$), while the presented results for TSSQP were tuned for the best fixed stepsize $\beta$, using the same procedure as in Section \ref{sec:numerical}.

\begin{figure}[t]
\hspace{-1.5cm}
\includegraphics[width=1.25\textwidth]{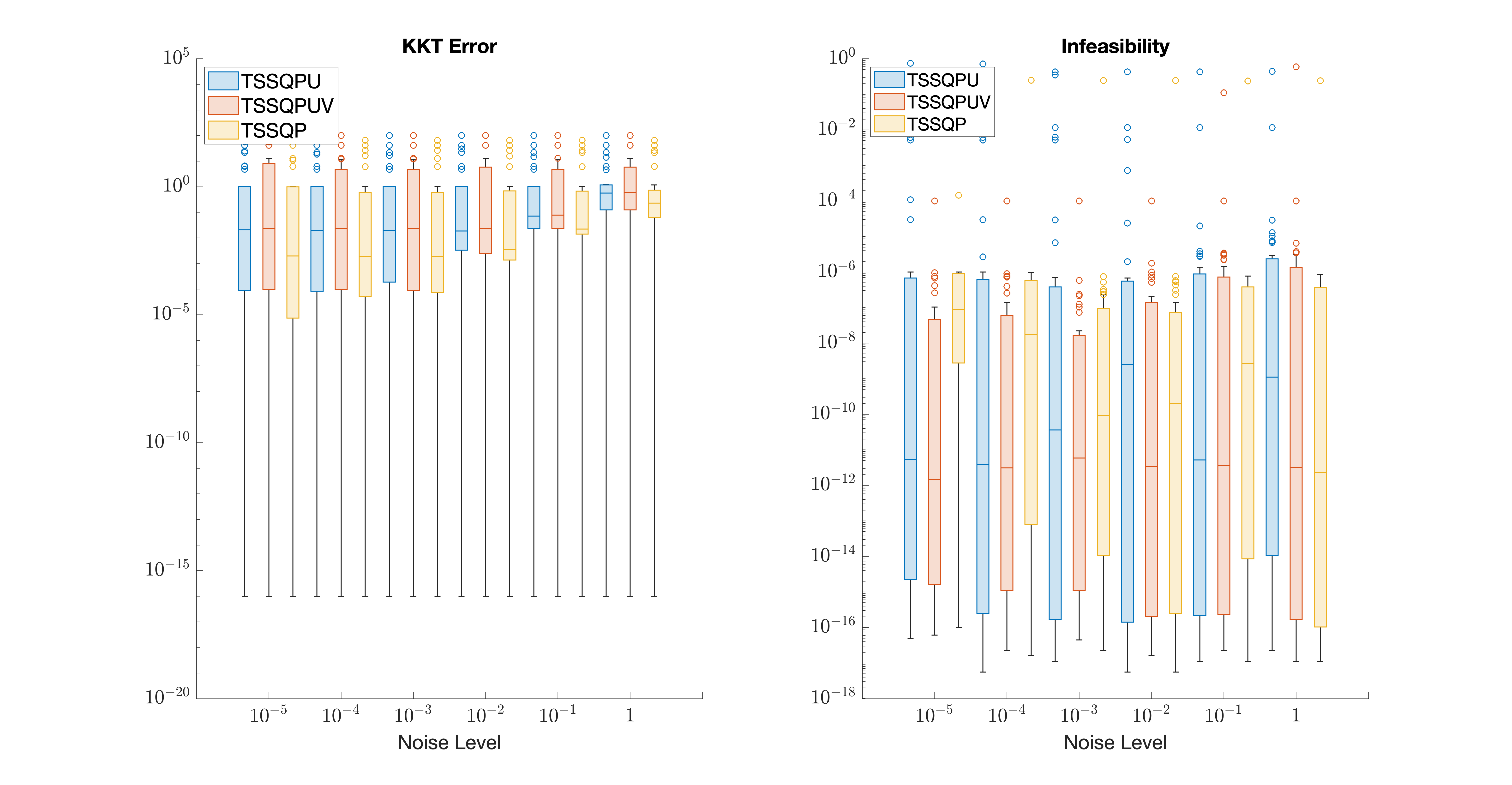}
\caption{Box plots of optimality error (left) and feasibility error (right) across various noise levels on CUTEst problems for TSSQPU and TSSQPUV. TSSQP is included for reference.}
\label{fig:boxplotAdaptive}
\end{figure}

\subsection{Adaptive Methods on CUTEst Problems}

In this subsection, we present the performance of TSSQPU and TSSQPUV on the CUTEst test set. For reference, we plot TSSQP along with TSSQPU and TSSQPUV in Figure \ref{fig:boxplotAdaptive}.

We observe that, with respect to the infeasibility measure, the three methods perform largely the same, with the fully adaptive methods performing slightly better than TSSQP. On the other hand, both adaptive methods struggle with respect to the stationarity error when compared with TSSQP. Considering that both adaptive methods do not use any tuning, while TSSQP does, we find the results promising for the adaptive methods, which may be useful in scenarios where parameter tuning is difficult or expensive. In addition, TSSQPUV may be useful in contexts where computation of $c(x)$ is expensive, as it does not rely on the backtracking procedure in Algorithm \ref{alg:tsssqpls} but still manages to converge to approximate feasibility consistently.

\subsection{Adaptive Methods on Logistic Regression Problems}

The results for the adaptive methods on the logistic regression problems from Subsection \ref{subsec:lr} can be found in Tables \ref{tbl:logistic_regressionapp3} and \ref{tbl:logistic_regressionapp4}, where TSSQP is included for reference.

From these tables, we observe that TSSQPU performs reasonably well, especially with respect to stationarity error, where it is the best algorithm more often than TSSQP. In terms of feasibility error, TSSQP is more often the superior algorithm, but TSSQPU still performs reasonably well, even without tuning to the problem instance. We view this as confirmation of the algorithm's ability to adapt to the problem structure effectively. On the other hand, TSSQPUV completely fails to converge on the majority of problems, both in terms of stationarity and feasibility (the initial infeasibility is $\|c(x_0)\|_\infty \approx 1$, so the instances where TSSQPUV records 1 for feasibility suggests that it never finds a more feasible point than the initial one). These results suggest that the linesearch procedure in Algorithm \ref{alg:tsssqpls} may be very useful for certain problem instances, such as the logistic regression problems explored here, as it is the only difference between TSSQPU and TSSQPUV, while the gap between their results is significant.

\begin{table}[tb]
  \caption{Average feasibility error, along with 95\% confidence intervals, of TSSQPU and TSSQPUV. For reference, we also include TSSQP.  The results for the best-performing algorithm are shown in bold.}
  \label{tbl:logistic_regressionapp3}
\centering
{\tiny
\begin{tabular}{l|c|c|c|c} \toprule
                                &                                 & \multicolumn{1}{c|}{\begin{tabular}[c]{@{}c@{}}TSSQPU\end{tabular}} & \multicolumn{1}{c|}{\begin{tabular}[c]{@{}c@{}}TSSQPUV\end{tabular}} & \multicolumn{1}{c}{\begin{tabular}[c]{@{}c@{}}TSSQP \end{tabular}}                               \\ \hline
\multicolumn{1}{l|}{dataset}  & \multicolumn{1}{c|}{batch} & \multicolumn{1}{c|}{Feasibility}          & \multicolumn{1}{c|}{Feasibility}              & \multicolumn{1}{c}{Feasibility} \\ \hline
\texttt{a9a} & 16                     &       $5.96e-06 \pm 1.14e-09             $                    &     $      5.68e-02 \pm 5.49e-12          $                                                                 &     $ \pmb{3.95e-08 \pm 2.77e-16}           $                                                  \\
\texttt{a9a} & 128                     &      $\pmb{7.65e-05 \pm 1.21e-06}           $                      &        $   1.00e+00 \pm 2.13e-16        $                                                                  &     $ 5.25e-07 \pm 3.07e-15            $                                             \\ \hline
\texttt{ijccn1} & 16                     &    $   8.29e-07 \pm 2.57e-10           $                      &        $   1.06e-06 \pm 3.92e-13           $                                                                 &   $   \pmb{5.17e-09 \pm 1.22e-16}          $                                            \\
\texttt{ijccn1} & 128                     &   $    2.03e-05 \pm 4.30e-09            $                     &        $  1.00e+00 \pm 2.13e-16            $                                                             &    $  \pmb{1.03e-07 \pm 1.33e-16}            $                                  \\ \hline
\texttt{ionosphere} & 16                     &    $   2.84e-04 \pm 4.45e-06         $                        &         $  1.00e+00 \pm 2.13e-16        $                                                                   &      $  \pmb{6.90e-08 \pm 9.06e-16}        $                                   \\ 
\texttt{ionosphere} & 128                     &    $  1.55e-03 \pm 1.39e-05       $                         &       $    1.00e+00 \pm 2.13e-16    $                                                                        &   $   \pmb{4.49e-08 \pm 9.46e-17}  $                                       \\ \hline
\texttt{madelon} & 16                     &     $  \pmb{2.67e-06 \pm 2.50e-06}            $                      &       $    1.00e+00 \pm 2.13e-16           $                                                                &      $2.79e-04 \pm 1.44e-04           $                               \\
\texttt{madelon} & 128                     &      $ 1.45e-04 \pm 8.61e-05         $                        &       $    1.00e+00 \pm 2.13e-16      $                                                                      &   $  \pmb{ 1.31e-04 \pm 7.12e-08  }   $                                          \\ \hline
\texttt{mushrooms} & 16                     &    $   5.57e-06 \pm 1.37e-08        $                         &      $     1.00e+00 \pm 2.13e-16       $                                                                    &     $ \pmb{7.99e-08 \pm 3.06e-15}   $                                        \\
\texttt{mushrooms} & 128                     &   $    \pmb{2.07e-06 \pm 2.88e-07}     $                            &     $      1.00e+00 \pm 2.13e-16     $                                                                       & $      8.67e-07 \pm 4.00e-14  $                                          \\ \hline
\texttt{phishing} & 16                     &     $  2.21e-05 \pm 1.63e-08         $                        &         $  1.00e+00 \pm 2.13e-16         $                                                                   &    $   \pmb{6.01e-08 \pm 2.66e-16}   $                                        \\
\texttt{phishing} & 128                     &     $  \pmb{1.71e-04 \pm 4.64e-07}      $                           &      $     1.00e+00 \pm 2.13e-16   $                                                                        &   $    9.37e-07 \pm 2.33e-14   $                                          \\ \hline
\texttt{sonar} & 16                     &      $ 4.76e-04 \pm 7.92e-07 $                       &         $  1.00e+00 \pm 2.13e-16       $                                                                     &     $ \pmb{8.59e-10 \pm 1.51e-16}    $                                    \\
\texttt{sonar} & 128                     &    $   5.94e-02 \pm 4.50e-04      $                          &         $  1.00e+00 \pm 2.13e-16      $                                                                     &    $  \pmb{2.60e-06 \pm 1.04e-16}   $                                           \\ \hline
\texttt{splice} & 16                     &      $ \pmb{4.70e-04 \pm 7.15e-07}        $                         &         $  1.00e+00 \pm 2.13e-16      $                                                                      &     $\pmb{6.82e-07 \pm 2.01e-14}    $                                          \\
\texttt{splice} & 128                     &    $   1.36e-03 \pm 2.16e-06      $                           &       $    1.00e+00 \pm 2.13e-16    $                                                                       &    $  \pmb{5.46e-07 \pm 1.85e-14}  $                                           \\ \hline
\texttt{w8a} & 16                     &      $ 9.97e-06 \pm 8.54e-10      $                           &         $  1.04e-04 \pm 3.26e-13      $                                                                  &      $ \pmb{2.43e-08 \pm 1.94e-16}        $                                      \\
\texttt{w8a} & 128                     &     $  9.83e-05 \pm 1.67e-07    $                             &      $     1.00e+00 \pm 2.13e-16   $                                                                       &   $   \pmb{3.04e-07 \pm 3.92e-16}    $                                        \\ \hline
\end{tabular}}

  \caption{Average stationarity error, along with 95\% confidence intervals, of TSSQPU and TSSQPUV. For reference, we also include TSSQP.  The results for the best-performing algorithm are shown in bold.}
  \label{tbl:logistic_regressionapp4}
\centering
{\tiny
\begin{tabular}{l|c|c|c|c} \toprule
                                &                                 & \multicolumn{1}{c|}{\begin{tabular}[c]{@{}c@{}}TSSQPU\end{tabular}} & \multicolumn{1}{c|}{\begin{tabular}[c]{@{}c@{}}TSSQPUV\end{tabular}} & \multicolumn{1}{c}{\begin{tabular}[c]{@{}c@{}}TSSQP \end{tabular}}                               \\ \hline
\multicolumn{1}{l|}{dataset}  & \multicolumn{1}{c|}{batch} & \multicolumn{1}{c|}{Stationarity}          & \multicolumn{1}{c|}{Stationarity}              & \multicolumn{1}{c}{Stationarity} \\ \hline
\texttt{a9a} & 16                     &       $3.75e-02 \pm 1.74e-07             $                    &     $      1.59e-01 \pm 1.05e-05          $                                                                 &     $ \pmb{2.92e-02 \pm 2.25e-10}           $                                                  \\
\texttt{a9a} & 128                     &      $1.39e-02 \pm 1.15e-06           $                      &        $   1.47e+03 \pm 1.81e-03       $                                                                  &     \pmb{$ 1.16e-02 \pm 3.80e-10           $}                                             \\ \hline
\texttt{ijccn1} & 16                     &    $   \pmb{4.06e-02 \pm 3.82e-09}           $                      &        $   1.24e-01 \pm 6.21e-07          $                                                                 &   $   4.88e-02 \pm 9.18e-15         $                                            \\
\texttt{ijccn1} & 128                     &   $    \pmb{8.69e-03 \pm 2.67e-08}            $                     &        $  2.00e+03 \pm 2.75e-03           $                                                             &    $  9.10e-03 \pm 3.62e-13            $                                  \\ \hline
\texttt{ionosphere} & 16                     &    $   1.65e-01 \pm 6.96e-05         $                        &         $  2.70e+03 \pm 2.07e-02        $                                                                   &      $  \pmb{1.03e-01 \pm 7.93e-10}       $                                   \\ 
\texttt{ionosphere} & 128                     &    $  \pmb{1.88e-02 \pm 2.76e-05}      $                         &       $    2.70e+03 \pm 4.95e-03    $                                                                        &   $   6.92e-02 \pm 4.21e-10  $                                       \\ \hline
\texttt{madelon} & 16                     &     $  \pmb{1.03e+02 \pm 2.91e+01}            $                      &       $    7.76e+02 \pm 1.71e+01           $                                                                &      $1.57e+02 \pm 4.59e+01          $                               \\
\texttt{madelon} & 128                     &      $ \pmb{8.83e+01 \pm 1.55e+01}         $                        &       $    7.62e+02 \pm 1.08e+01      $                                                                      &   $  \pmb{ 8.79e+01 \pm 3.02e-03 }   $                                          \\ \hline
\texttt{mushrooms} & 16                     &    $   1.25e-02 \pm 2.55e-07        $                         &      $     1.37e+03 \pm 1.03e-02       $                                                                    &     $ \pmb{3.81e-03 \pm 1.56e-10}   $                                        \\
\texttt{mushrooms} & 128                     &   $    \pmb{1.95e-03 \pm 1.53e-04}     $                            &     $     1.37e+03 \pm 2.59e-03     $                                                                       & $      2.48e-03 \pm 5.33e-10  $                                          \\ \hline
\texttt{phishing} & 16                     &     $  \pmb{5.13e-03 \pm 9.04e-08}       $                        &         $  1.56e+03 \pm 2.13e-03         $                                                                   &    $   7.30e-03 \pm 7.03e-11   $                                        \\
\texttt{phishing} & 128                     &     $  9.24e-03 \pm 1.20e-07      $                           &      $     1.56e+03 \pm 7.63e-04   $                                                                        &   $    \pmb{4.04e-03 \pm 8.70e-11}   $                                          \\ \hline
\texttt{sonar} & 16                     &      $ \pmb{8.57e-02 \pm 3.50e-06} $                       &         $  1.59e+03 \pm 1.63e-02      $                                                                     &     $ 1.17e-01 \pm 1.37e-10    $                                    \\
\texttt{sonar} & 128                     &    $   \pmb{5.94e-02 \pm 4.50e-04}      $                          &         $  1.59e+03 \pm 7.26e-03      $                                                                     &    $  1.68e-01 \pm 1.38e-09   $                                           \\ \hline
\texttt{splice} & 16                     &      $ \pmb{2.88e-01 \pm 2.03e-05}        $                         &         $  1.59e+03 \pm 1.53e-01      $                                                                      &     $ 4.15e-01 \pm 1.11e-08   $                                          \\
\texttt{splice} & 128                     &    $   \pmb{4.27e-02 \pm 1.07e-05}      $                           &       $    1.59e+03 \pm 6.15e-02    $                                                                       &    $ 3.97e-01 \pm 5.03e-09  $                                           \\ \hline
\texttt{w8a} & 16                     &      $ 2.81e-02 \pm 5.57e-08      $                           &         $  1.48e-01 \pm 7.84e-07     $                                                                  &      $ \pmb{5.46e-03 \pm 1.05e-11}        $                                      \\
\texttt{w8a} & 128                     &     $     9.25e-03 \pm 2.08e-06    $                             &      $     9.52e+02 \pm 1.65e-03    $                                                                       &   $   \pmb{3.09e-03 \pm 7.04e-11}    $                                        \\ \hline
\end{tabular}}

\end{table}

\section{\rev{Numerical Performance of Linesearch Variants and Stepsize Plots}} \label{app:tssqpa}

In this subsection, we report results for a variant of Algorithm \ref{alg:tsssqpls} in which the term $q_k$ always accumulates (i.e., in line 10 of Algorithm \ref{alg:tsssqpls}, we set $q_k = \hat{q}_k$). The experiments were carried out in the same manner as those in Section \ref{sec:numerical} and we refer to this variant as ``TSSQPA", for always accumulate, in this section. To see the effect of this modification, we only compare directly with TSSQP. In addition, at the end of this appendix, we include plots of the behavior of the adaptive stepsizes computed by TSSQP, TSSQPA and TSSQPU on a logistic regression problem.

\subsection{CUTEst Results}

First, we report our results on the CUTEst problem set, which can be seen in Figure \ref{fig:tssqpa}. We see relatively similar performance between the methods across all noise levels, with a slight advantage in KKT error for TSSQP and a slightly larger advantage in infeasibility error for TSSQPA. This aligns with the idea that TSSQPA always accumulates the $\|c_k\|_1$ in $q_k$, thus leading to overall shorter steps, which may hinder progress in acheiving optimality.

\begin{figure}[t]
\hspace{-1.5cm}
\includegraphics[width=1.25\textwidth]{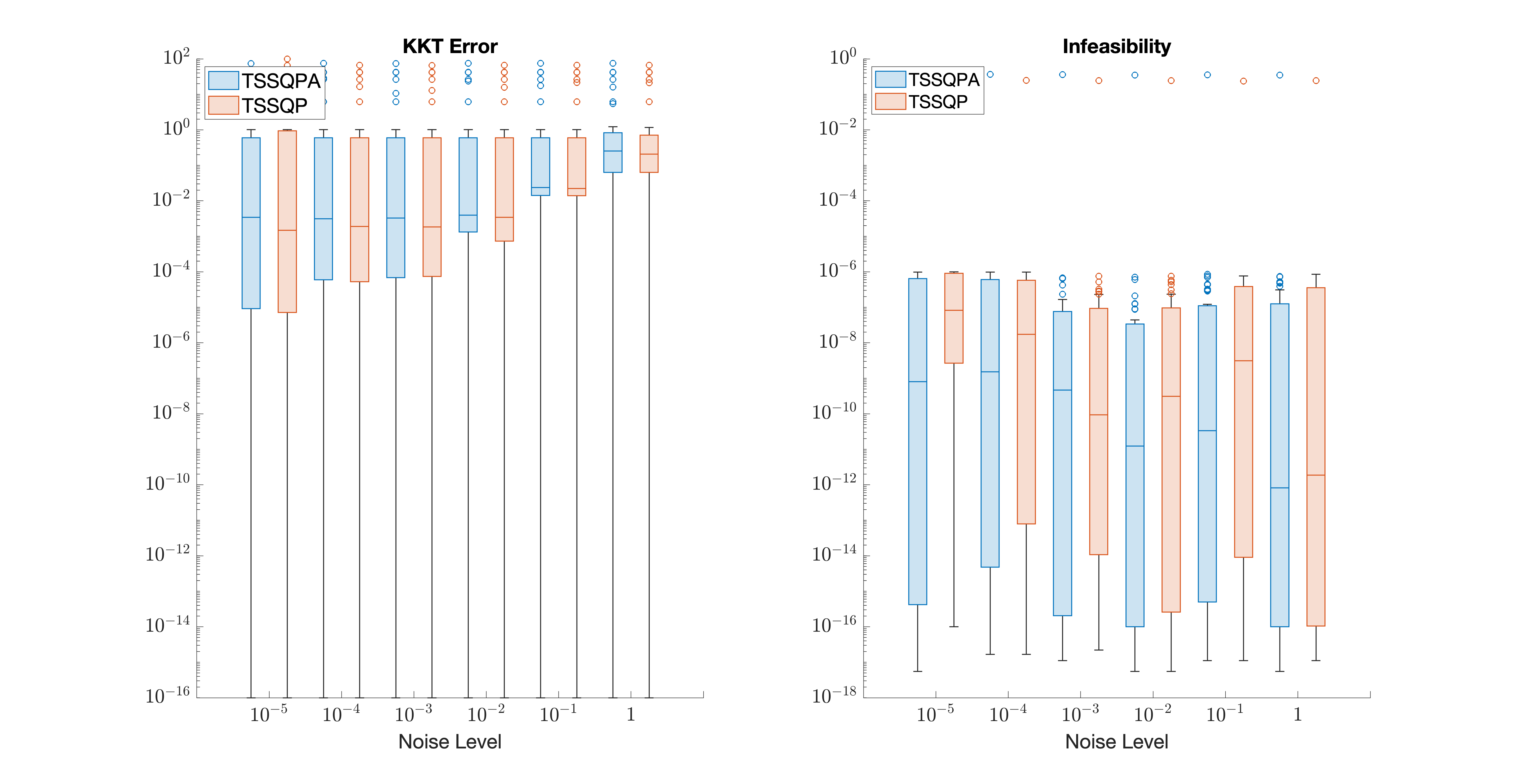}
\caption{Box plots of optimality error (left) and feasibility error (right) across various noise levels on CUTEst problems of TSSQPA vs TSSQP.}
\label{fig:tssqpa}
\end{figure}

\subsection{Logistic Regression Results}

Next, we report the results for the logistic regression problems, which can be seen below in Table \ref{tbl:logistic_regressionapp1} and \ref{tbl:logistic_regressionapp2}. We see that TSSQPA and TSSQP perform very similarly in terms of both feasibility and stationarity, with TSSQPA having a slight edge in feasibility and TSSQP having a slight edge in stationarity. On a whole, these results suggest that both methods are effective and are reasonable choices for these types of problems.

\begin{table}[tb]
  \caption{Average feasibility error, along with 95\% confidence intervals, of TSSQP and TSSQPA.  The results for the best-performing algorithm are shown in bold.}
  \label{tbl:logistic_regressionapp1}
\centering
{\tiny
\begin{tabular}{l|c|c|c} \toprule
                                &                                 & \multicolumn{1}{c|}{\begin{tabular}[c]{@{}c@{}}TSSQPA\end{tabular}} & \multicolumn{1}{c|}{\begin{tabular}[c]{@{}c@{}}TSSQP\end{tabular}}                             \\ \hline
\multicolumn{1}{l|}{dataset}  & \multicolumn{1}{c|}{batch} & \multicolumn{1}{c|}{Feasibility}          & \multicolumn{1}{c|}{Feasibility}             \\ \hline
\texttt{a9a} & 16         &  $2.15e-07 \pm 2.93e-15$                                                                 &     $ \pmb{3.95e-08 \pm 2.77e-16}           $                                                  \\
\texttt{a9a} & 128        &   \pmb{$1.96e-07 \pm 1.70e-14$}                                                                 &     $ 5.25e-07 \pm 3.07e-15            $                                             \\ \hline
\texttt{ijccn1} & 16        & $2.55e-08 \pm 1.43e-16$                                                            &   $   \pmb{5.17e-09 \pm 1.22e-16}          $                                            \\
\texttt{ijccn1} & 128          &  $5.12e-07 \pm 1.63e-16$          &  $  \pmb{1.03e-07 \pm 1.33e-16}            $                                 \\ \hline
\texttt{ionosphere} & 16          &                $3.74e-07 \pm 5.02e-15$                                                       &      $  \pmb{6.90e-08 \pm 9.06e-16}        $                                   \\ 
\texttt{ionosphere} & 128    &  $\pmb{3.90e-08 \pm 2.28e-16}$   &   $   4.49e-08 \pm 9.46e-17  $                                       \\ \hline
\texttt{madelon} & 16 & $\pmb{1.91e-05 \pm 1.18e-06}$ &              $2.79e-04 \pm 1.44e-04           $                               \\
\texttt{madelon} & 128   & $\pmb{3.58e-06 \pm 1.95e-12}$   &         $  1.31e-04 \pm 7.12e-08     $                                          \\ \hline
\texttt{mushrooms} & 16  &  $4.38e-07 \pm 2.35e-13$   &     $ \pmb{7.99e-08 \pm 3.06e-15}   $                                      \\
\texttt{mushrooms} & 128    &   $\pmb{2.40e-07 \pm 3.17e-13}$      & $      8.67e-07 \pm 4.00e-14  $                                          \\ \hline
\texttt{phishing} & 16   & $4.71e-07 \pm 2.39e-15$    &    $   \pmb{6.01e-08 \pm 2.66e-16}   $                                        \\
\texttt{phishing} & 128   &    $\pmb{2.72e-07 \pm 3.23e-14}$                      &   $    9.37e-07 \pm 2.33e-14  $                                          \\ \hline
\texttt{sonar} & 16             &   $\pmb{1.03e-10 \pm 9.84e-14}$                              &     $ 8.59e-10 \pm 1.51e-16    $                                    \\
\texttt{sonar} & 128     &  $\pmb{7.76e-10 \pm 1.63e-16}$  &    $  2.60e-06 \pm 1.04e-16   $                                           \\ \hline
\texttt{splice} & 16   &  $\pmb{7.74e-08 \pm 3.29e-15}$  &     $ 6.82e-07 \pm 2.01e-14    $                                          \\
\texttt{splice} & 128    &  $1.58e-07 \pm 5.32e-15$   &    $  \pmb{5.46e-07 \pm 1.85e-14}  $                                           \\ \hline
\texttt{w8a} & 16      &      $1.04e-07 \pm 2.55e-15$      &      $ \pmb{2.43e-08 \pm 1.94e-16}        $                                      \\
\texttt{w8a} & 128     &    $\pmb{1.61e-07 \pm 1.31e-14}$      &   $   3.04e-07 \pm 3.92e-16    $                                        \\ \hline
\end{tabular}}

  \caption{Average stationarity error, along with 95\% confidence intervals, of TSSQP and TSSQPA.  The results for the best-performing algorithm are shown in bold.}
  \label{tbl:logistic_regressionapp2}
\centering
{\tiny
\begin{tabular}{l|c|c|c} \toprule
                                &                                 & \multicolumn{1}{c|}{\begin{tabular}[c]{@{}c@{}}TSSQPA\end{tabular}} & \multicolumn{1}{c|}{\begin{tabular}[c]{@{}c@{}}TSSQP\end{tabular}}                               \\ \hline
\multicolumn{1}{l|}{dataset}  & \multicolumn{1}{c|}{batch} & \multicolumn{1}{c|}{Stationarity}          & \multicolumn{1}{c|}{Stationarity}        \\ \hline
\texttt{a9a} & 16          &       $3.76e-02 \pm 4.05e-09$                                                             &     $ \pmb{2.92e-02 \pm 2.25e-10}           $                                                  \\
\texttt{a9a} & 128                     &   $1.13e-01 \pm 6.33e-09$                                                                   &     $ \pmb{1.16e-02 \pm 3.80e-10}           $                                             \\ \hline
\texttt{ijccn1} & 16                     &   $4.89e-02 \pm 1.01e-11$                                                                &   \pmb{$   4.88e-02 \pm 9.18e-15$}                                            \\
\texttt{ijccn1} & 128                     &    \pmb{$8.91e-03 \pm 1.45e-11$}                                                            &    $  9.10e-03 \pm 3.62e-13            $                                  \\ \hline
\texttt{ionosphere} & 16     &   $1.11e-01 \pm 9.27e-10$                 &      $  \pmb{1.03e-01 \pm 7.93e-10}        $                                   \\ 
\texttt{ionosphere} & 128      &     $1.35e-01 \pm 3.34e-10$                                                                      &   \pmb{$   6.92e-02 \pm 4.21e-10  $}                                       \\ \hline
\texttt{madelon} & 16         &       \pmb{$1.20e+02 \pm 1.01e+01$}                                                             &      $1.57e+02 \pm 4.59e+01
         $                               \\
\texttt{madelon} & 128          &      \pmb{$1.33e+01 \pm 2.32e-05$}               &   $  8.79e+01 \pm 3.02e-03   $                                          \\ \hline
\texttt{mushrooms} & 16         &      \pmb{$3.79e-03 \pm 5.55e-09$}           &     $ 3.81e-03 \pm 1.56e-10   $                                        \\
\texttt{mushrooms} & 128       &      $6.26e-03 \pm 1.18e-08$        & $      \pmb{2.48e-03 \pm 5.33e-10}  $                                          \\ \hline
\texttt{phishing} & 16      &        $1.12e-02 \pm 2.24e-10$      &    $   \pmb{7.30e-03 \pm 7.03e-11}   $                                        \\
\texttt{phishing} & 128   &    $8.18e-03 \pm 7.24e-10$        &   $    \pmb{4.04e-03 \pm 8.70e-11}   $                                          \\ \hline
\texttt{sonar} & 16    &    \pmb{$1.17e-01 \pm 1.66e-08$}           &     $ \pmb{1.17e-01 \pm 1.37e-10}    $                                    \\
\texttt{sonar} & 128     &    \pmb{$8.13e-02 \pm 2.05e-10$}        &    $  1.68e-01 \pm 1.38e-09   $                                           \\ \hline
\texttt{splice} & 16     &    $4.53e-01 \pm 1.58e-08$            &     \pmb{$4.15e-01 \pm 1.11e-08   $}                                          \\
\texttt{splice} & 128     &   $4.01e-01 \pm 5.01e-09$         &    \pmb{$  3.97e-01 \pm 5.03e-09  $}                                           \\ \hline
\texttt{w8a} & 16               &   $2.04e-02 \pm 3.46e-09$   &      $ \pmb{5.46e-03 \pm 1.05e-11}        $                                      \\
\texttt{w8a} & 128   &        $8.76e-02 \pm 5.26e-09$        &   $   \pmb{3.09e-03 \pm 7.04e-11}    $                                        \\ \hline
\end{tabular}}

\end{table}

\subsection{Stepsize Behavior of Different Adaptive Methods}

In this subsection, we provide plots of the behavior of the adaptive stepsizes computed by three methods, TSSQP, TSSQPA, and TSSQPU on the logistic regression problem w8a with a batch size of 16, as it is one of the largest problems in the collection. For this problem, the best settings for $\beta$ was $10^{-3}$ for TSSQP and $10^{-4}$ for TSSQPA. We used these settings when generating the following plots.

The plots for TSSQP and TSSQPA are given in Figures \ref{fig:tssqpstepsize} and \ref{fig:tssqpastepsize}. We focus on the first 30 iterations of TSSQPA, as the stepsize drops rapidly in that interval then remains largely unchanged for the rest of hte procedure. We see that in both plots, the stepsize lower bound ($\nu/\sqrt{q_k}$) quickly decreases before becoming essentially flat, due to the constraint violation becoming very small. However, in the case of TSSQP, the lower bound stops at a much higher value ($\approx 0.52$) while TSSQPA stops at a lower value ($\approx 0.02$). This smaller stepsize impacts the ability of TSSQPA to acheive as accurate of a stationarity measure, as seen in Table \ref{tbl:logistic_regressionapp2}. Either way, we see that the adaptive stepsize strategy quickly falls to a level such that the algorithm can make significant progress towards feasibility (even if it does increase initially, such as in the case of TSSQPA).

Next, in Figure \ref{fig:betaU}, we plot the behavior of $\beta_k$ when it is chosen adaptively by \eqref{eq:adabq} and \eqref{eq:adastep}. Due to the way $\beta_k$ is constructed, the expected tail behavior is $\beta_k \sim 1/\sqrt{k}$. As such, we fit a curve of the form $a/\sqrt{k}$ and plot this as well. We see that this fit is relatively accurate, suggesting that after an initial adaptive period where $\beta$ falls rapidly to $\beta_k \approx 10^{-1}$, it settles into a decaying stepsize regime with a rate of decay proportional to $1/\sqrt{k}$. We note that this is essentially the behavior predicted by the quantity $\kappa_8(K)$ in Corollary \ref{cor:adacomplexity} (see \eqref{eq:bkbound}).

\begin{figure}[t]
\centering
\includegraphics[width=1\textwidth]{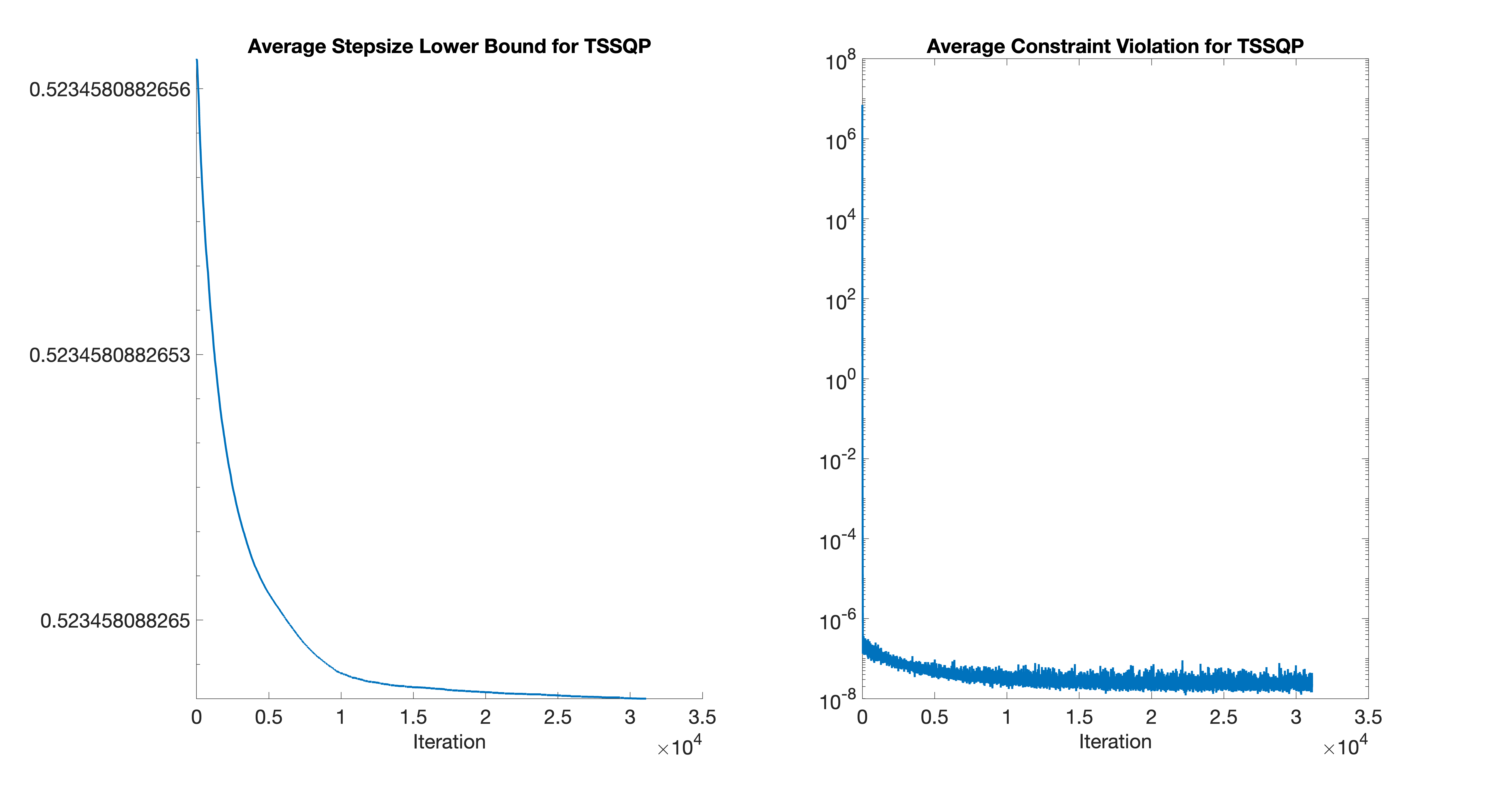}
\caption{Stepsize lower bound of TSSQP on w8a with a batchsize of 16 and the constraint violation.}\label{fig:tssqpstepsize}
\end{figure}

\begin{figure}[t]
\centering
\includegraphics[width=1\textwidth]{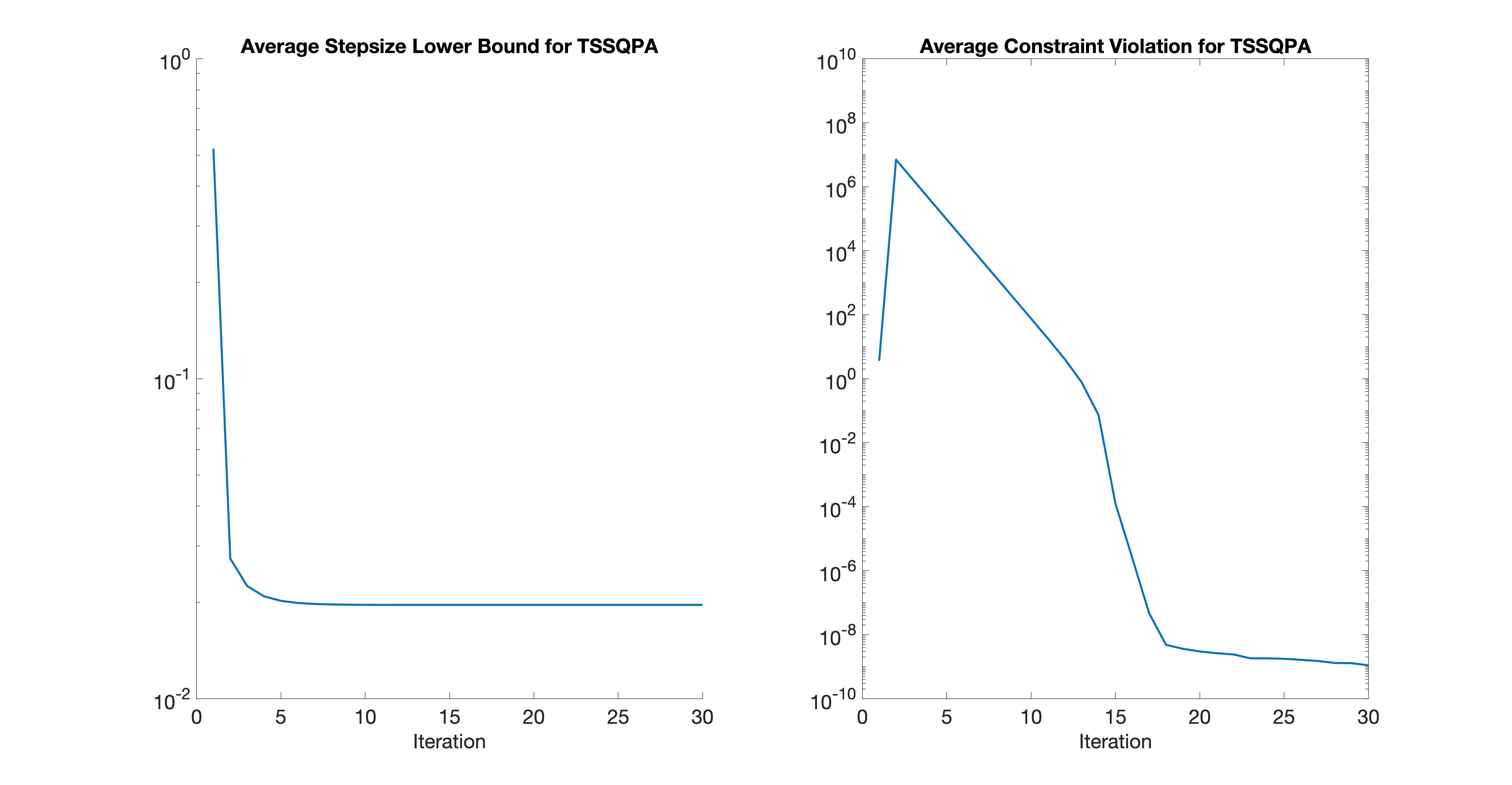}
\caption{Stepsize lower bound of TSSQPA on w8a with a batchsize of 16 and the constraint violation. We focus on the first 30 iterations, as the lower bound changes rapidly then remains largely flat afterwards.}\label{fig:tssqpastepsize}
\end{figure}

\begin{figure}[t]
\centering
\includegraphics[width=1\textwidth]{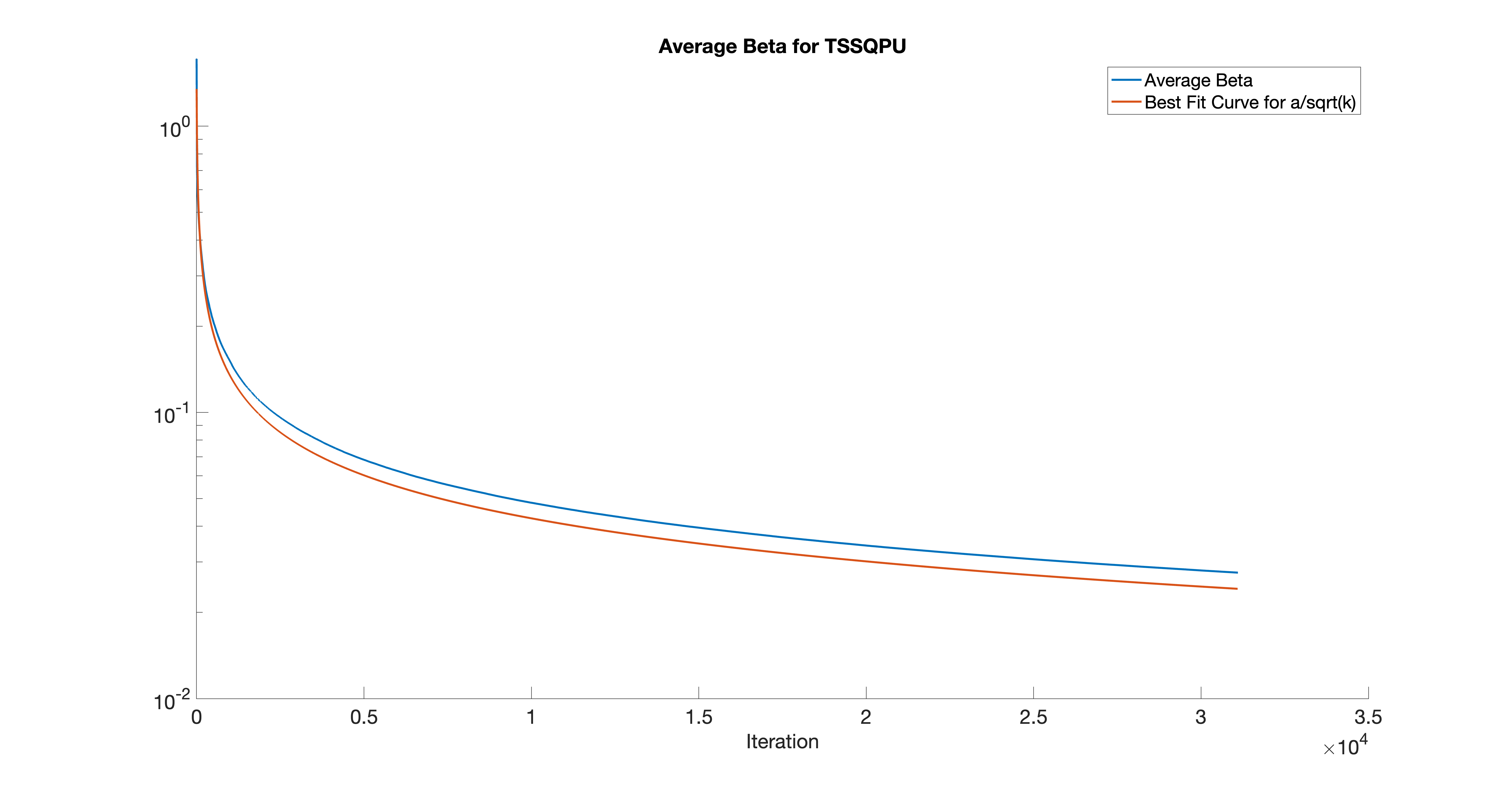}
\caption{Adaptive value of $\beta_k$ for TSSQPU on w8a. We also include a curve which is fit to $a/\sqrt{k}$, with $a = 4.266$, to compare with the expected tail behavior.}\label{fig:betaU}
\end{figure}

\section{\rev{Wallclock Performance of Methods}} \label{app:time}

In this appendix, we provide information on the wallclock performance of the numerical experiments. Caution should be exercised when considering the results of this appendix as the numerical experiments were run in a cluster environment, so there is no guarantee all experiments were run on identical hardware. We present two main pieces of information in this appendix: overall wallclock time of the methods and, for TSSQP, the relative wallclock time required to compute the decomposition $u$ and $v$ given a search direction $d$.

\subsection{Time Comparison for CUTEst Problems}

In this subsection, we compare the time for SSQP and TSSQP to solve the CUTEst problems from Subsection \ref{subsec:cutest}. To compare these quantities, for each problem, we compute the following measure:
\begin{equation} \label{eq:timecompare}
    \frac{T_{SSQP}(i) - T_{TSSQP}(i)}{\max\{T_{SSQP}(i), T_{TSSQP}(i)\}},
\end{equation}
where $T_{SSQP}(i)$ and $T_{TSSQP}(i)$ are the average wallclock times of SSQP and TSSQP on problem $i$, respectively. This measure is always in $[-1,1]$, with numbers closer to $-1$ representing SSQP taking less wallclock time than TSSQP and numbers closer to 1 representing TSSQP taking less time than SSQP. These results are plotted in Figures \ref{fig:timeComp0}, \ref{fig:timeComp2}, and \ref{fig:timeComp4} for noise levels of $1$, $10^{-2}$, and $10^{-4}$, respectively. We see that TSSQP consistently takes less time than SSQP, which may be a product of the early stopping done on these problems, as outlined in Section \ref{subsec:cutest}. However, this clearly shows that the additional cost of computing the decomposition and backtracking does not increase the computational costs so as to outweigh their benefits.

\begin{figure}[t]
\centering \includegraphics[width=1\textwidth]{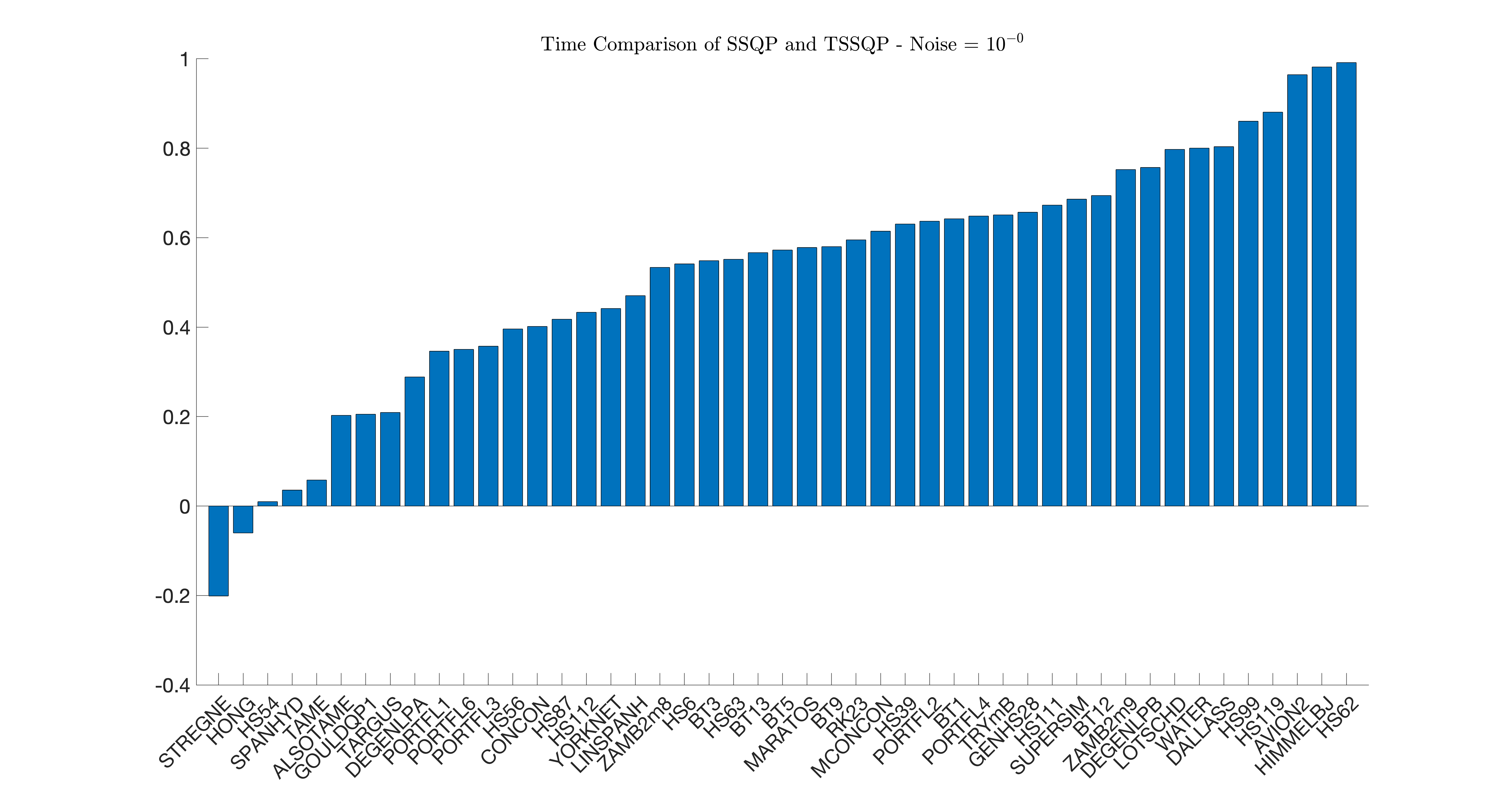}
\caption{Plots related to the measure \eqref{eq:timecompare} with $\epsilon_N = 1$.}
\label{fig:timeComp0}
\end{figure}

\begin{figure}[t]
\centering
\includegraphics[width=1\textwidth]{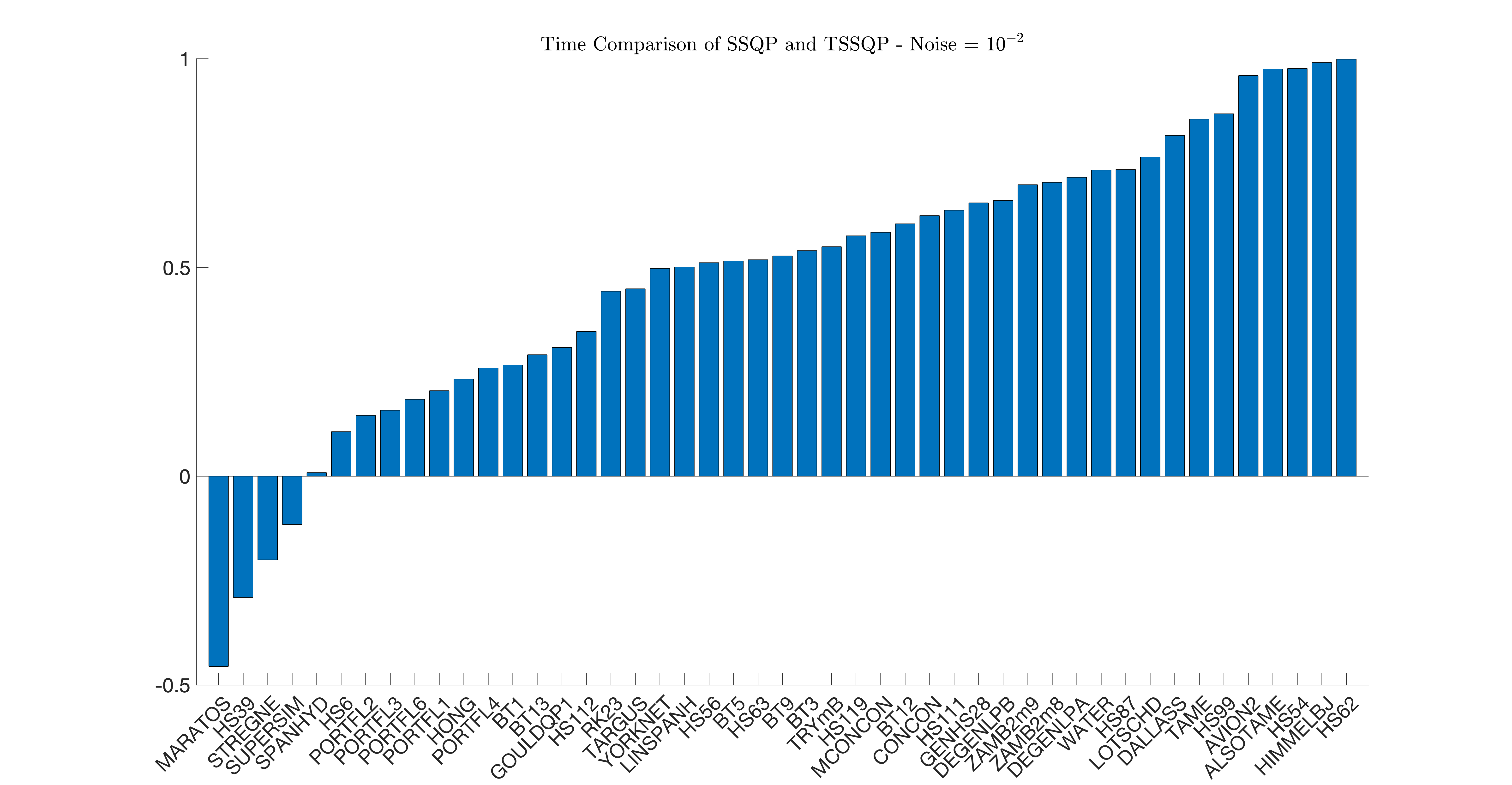}
\caption{Plots related to the measure \eqref{eq:timecompare} with $\epsilon_N = 10^{-2}$.}
\label{fig:timeComp2}
\end{figure}

\begin{figure}[t]
\centering
\includegraphics[width=1\textwidth]{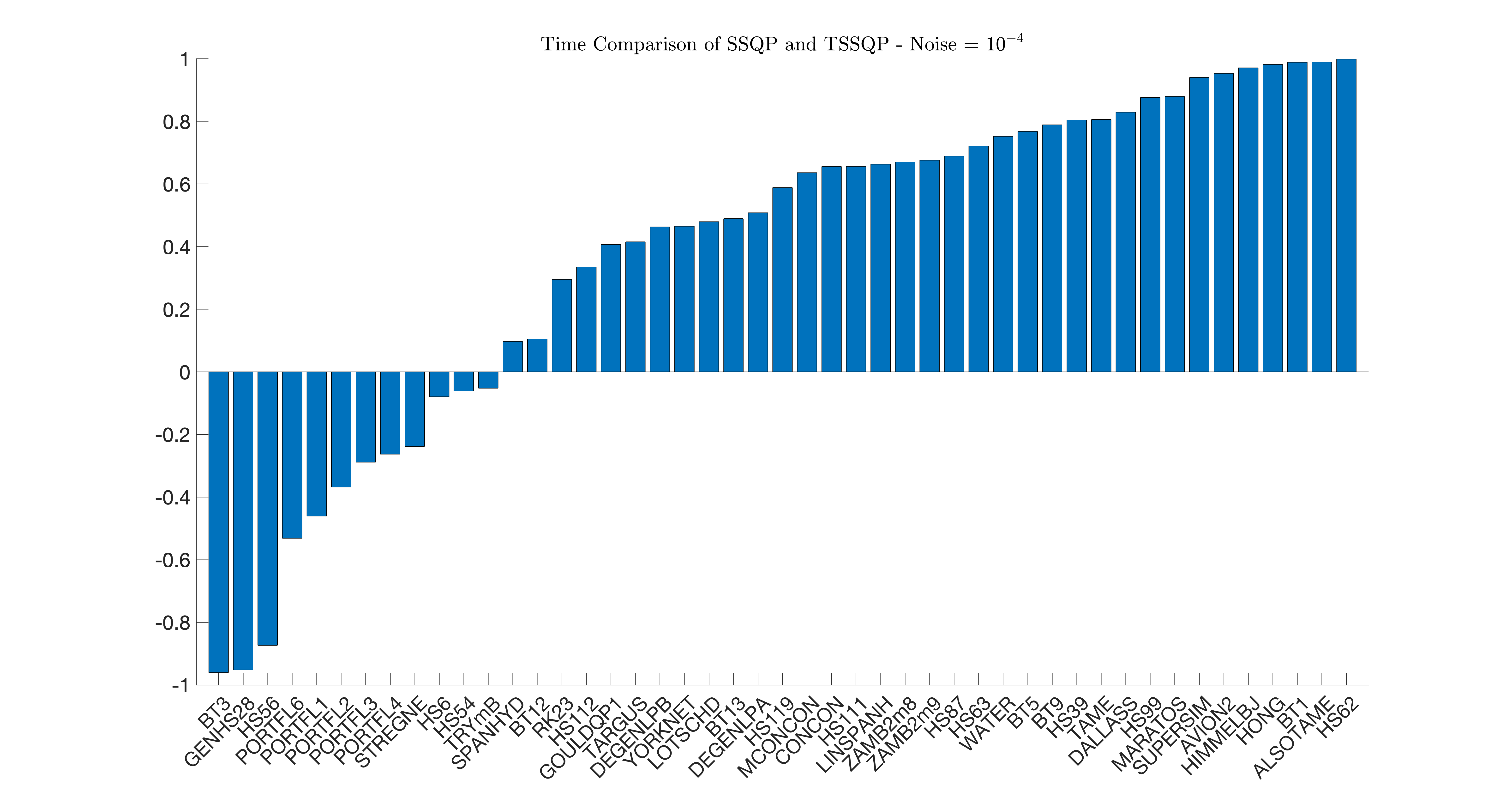}
\caption{Plots related to the measure \eqref{eq:timecompare} with $\epsilon_N = 10^{-4}$.}
\label{fig:timeComp4}
\end{figure}

In addition to these plots, we also provide plots which calculate the relative time to compute the decomposition $u_k$ and $v_k$, i.e.
\begin{equation} \label{eq:decomptime}
    T_{decomp}(i)/T_{TSSQP}(i),
\end{equation}
where $T_{decomp}(i)$ is the time spent computing $u_k$ and $v_k$ after obtaining $p_k$ on problem $i$. These results are given in Figures \ref{fig:decompTime0}, \ref{fig:decompTime2}, and \ref{fig:decompTime4}. We see that for most problems, the relative amount of time spent finding the decomposition $u_k$ and $v_k$ is no more that 15\% of the total runtime. Thus, while computing this decomposition requires additional work, it does not overwhelm the cost of the full algorithm and appears to be beneficial enough to be worth the cost of this additional overhead. There are, however, a few instances, for which this cost is significantly higher, up to 60\% of the time for one instance when the noise level was $10^{-4}$. In such cases, other methods which do not rely on this decompostion may be preferred.

\begin{figure}[t]
\centering
\includegraphics[width=1\textwidth]{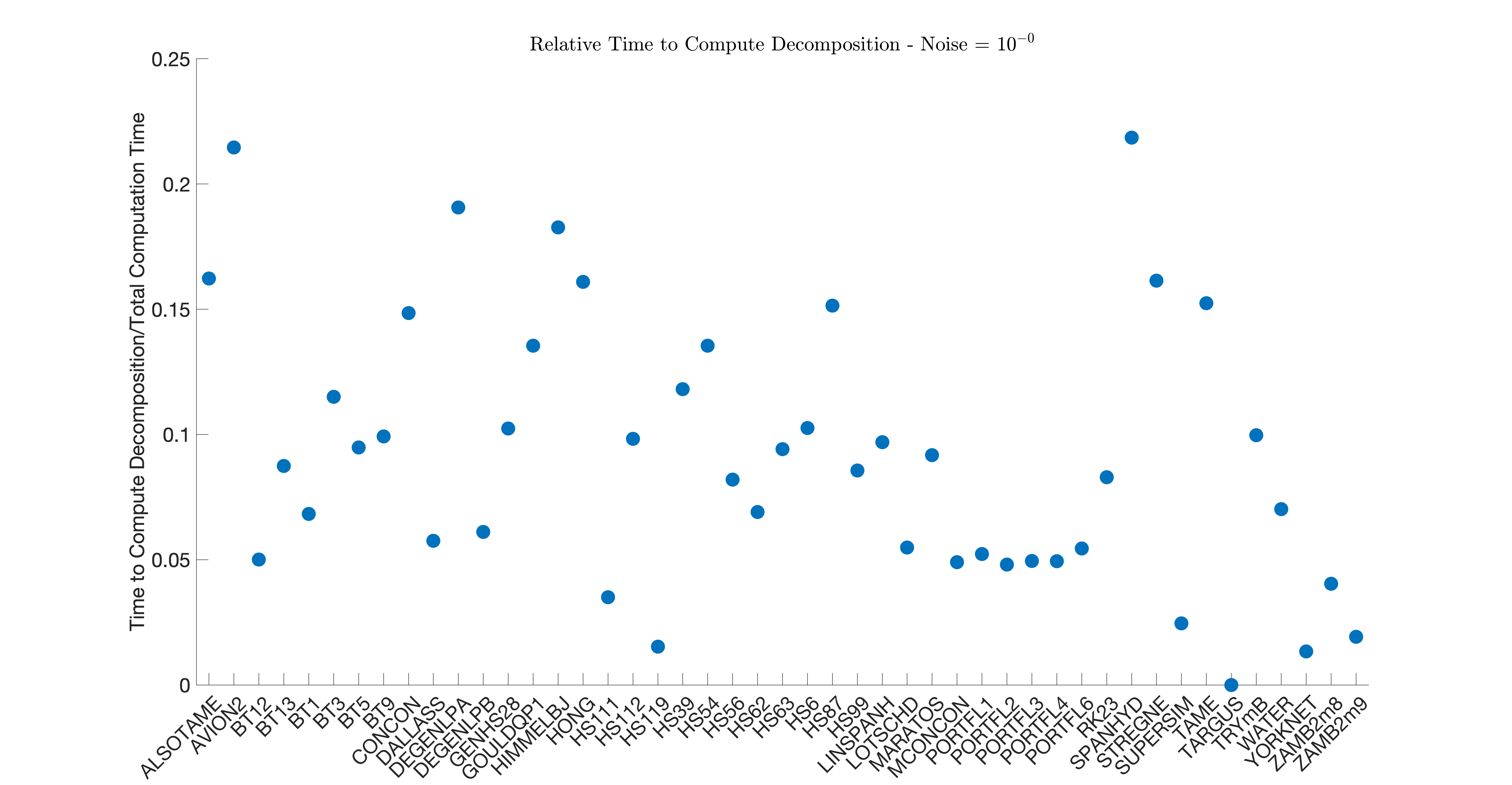}
\caption{Relative time to compute decomposition $u$ and $v$ in TSSQP, as measured by \eqref{eq:decomptime} with noise level $\epsilon_N = 1$.}
\label{fig:decompTime0}
\end{figure}

\begin{figure}[t]
\centering
\includegraphics[width=1\textwidth]{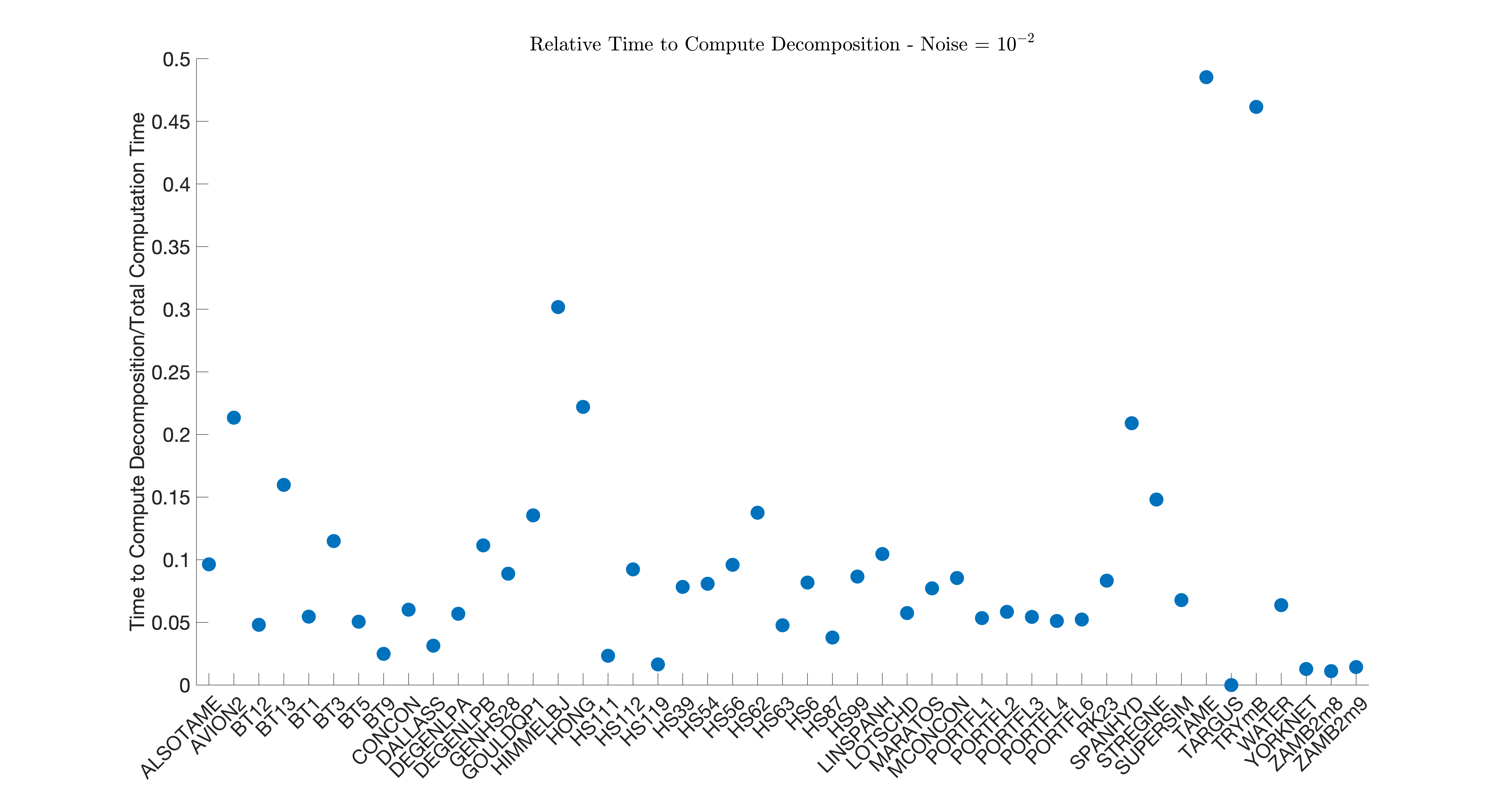}
\caption{Relative time to compute decomposition $u$ and $v$ in TSSQP, as measured by \eqref{eq:decomptime} with noise level $\epsilon_N = 10^{-2}$.}\label{fig:decompTime2}
\end{figure}

\begin{figure}[t]
\centering
\includegraphics[width=1\textwidth]{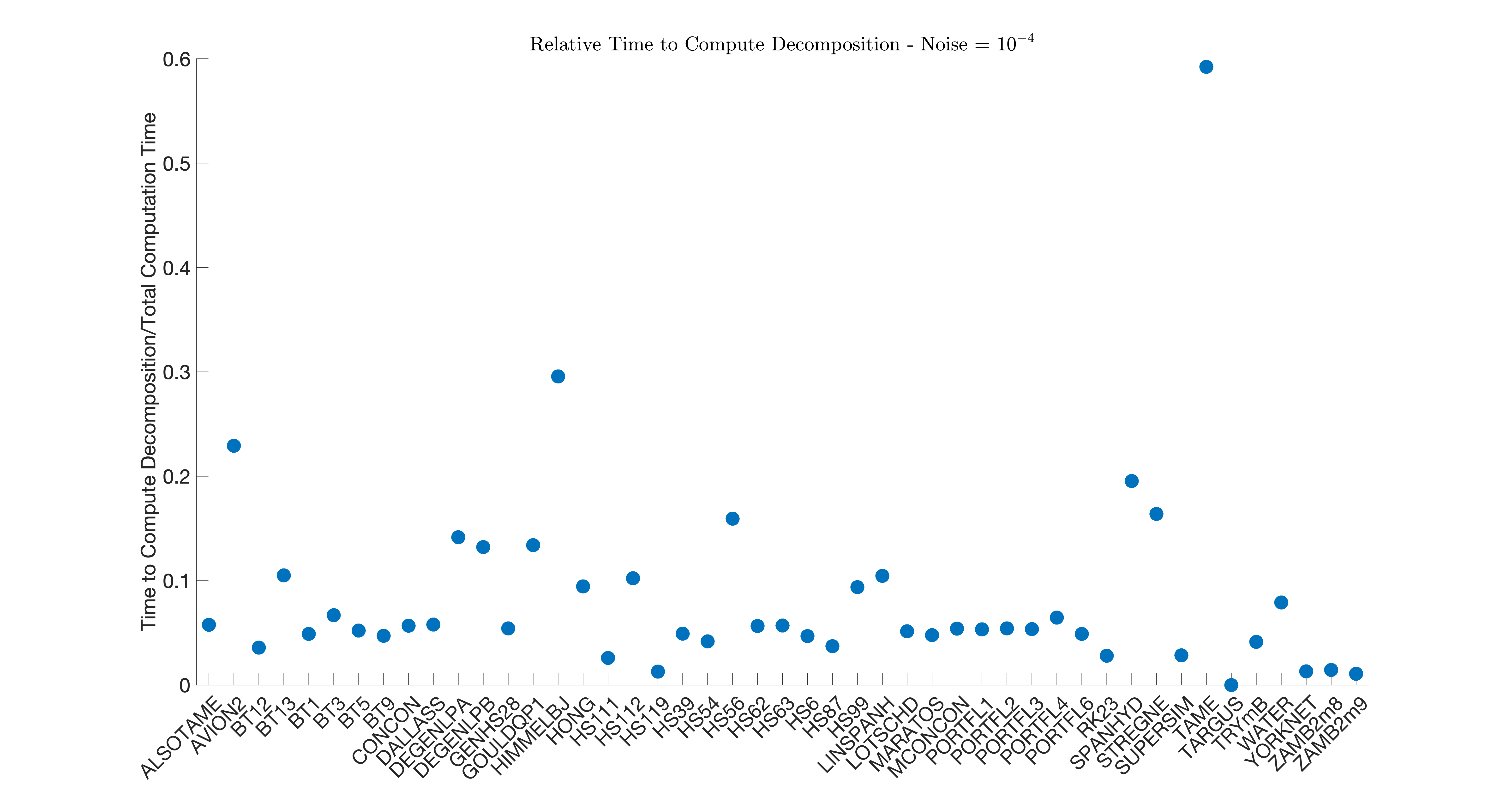}
\caption{Relative time to compute decomposition $u$ and $v$ in TSSQP, as measured by \eqref{eq:decomptime} with noise level $\epsilon_N = 10^{-4}$.}\label{fig:decompTime4}
\end{figure}

\subsection{Logistic Regression Results}

We conclude this appendix with wallclock times of the logistic regression experiments in Section \ref{subsec:lr}. We include MLALM, SSQP, and TSSQP for comparison. Recall that MLALM was given a significantly larger budget (300 epochs) than the SQP methods (10 epochs), to account for the difference in step computation costs. In addition, for TSSQP we include a column, called relative decomposition time, which records \eqref{eq:decomptime} for the logistic regression experiments. The values can be found in Table \ref{tbl:log_times}. We see that MLALM almost always takes the most amount of time of the algorithms, validating that the number of additional epochs was sufficient for fair comparison. In addition, we see that TSSQP usually takes less time to complete than SSQP, validating that computing the decomposition and using the backtracking procedure in Algorithm \ref{alg:tsssqpls} are only modest costs that do not detract from effectiveness the algorithm. Lastly, the final column reports the relative decomposition time, which shows that the computing the decomposition is less than 10\% across all problem instances and under 5\% for most.

\begin{table}[tb]
  \caption{Wallclock times for MLALM, SSQP, and TSSQP on logistic regression problems. The relative decomposition, computed by \eqref{eq:decomptime} is the final column for TSSQP.}
  \label{tbl:log_times}
\centering
{\tiny
\begin{tabular}{l|c|c|c|c|c} \toprule
                                &                                 & \multicolumn{1}{c|}{\begin{tabular}[c]{@{}c@{}}MLALM\end{tabular}} & \multicolumn{1}{c|}{\begin{tabular}[c]{@{}c@{}}SSQP\end{tabular}} & \multicolumn{2}{c}{\begin{tabular}[c]{@{}c@{}}TSSQP \end{tabular}}                               \\ \hline
\multicolumn{1}{l|}{dataset}  & \multicolumn{1}{c|}{batch} & \multicolumn{1}{c|}{Total Time}          & \multicolumn{1}{c|}{Total Time}              & \multicolumn{1}{c}{Total Time} & \multicolumn{1}{c}{Decomp Time} \\ \hline
\texttt{a9a} & 16                     &       $1.19e+02             $                    &     $      4.76e+01          $                                                                 &     $ 3.09e+01           $  & 0.05                                                \\
\texttt{a9a} & 128                     &      $6.64e+01          $                      &        $   7.43e+00        $                                                                  &     $ 5.35e+00            $          & 0.04                                   \\ \hline
\texttt{ijccn1} & 16                     &    $   7.21e+01           $                      &        $   7.87e+01           $                                                                 &   $   4.11e+01          $   & 0.04                                         \\
\texttt{ijccn1} & 128                     &   $    2.36e+01            $                     &        $  1.19e+01            $                                                             &    $  8.13e+00            $ & 0.03                                  \\ \hline
\texttt{ionosphere} & 16                     &    $   6.42e-01         $                        &         $  1.39e+00        $                                                                   &      $  7.75e-01        $  & 0.04                                 \\ 
\texttt{ionosphere} & 128                     &    $  9.80e-02        $                         &       $      1.52e-01   $                                                                        &   $   4.51e-02  $ & 0.03                                      \\ \hline
\texttt{madelon} & 16                     &     $  1.51e+01           $                      &       $    2.64e+00           $                                                                &      $1.29e+01           $   & 0.05                            \\
\texttt{madelon} & 128                     &      $   1.11e+01        $                        &       $   3.93e-01       $                                                                      &   $  8.50e-01
   $              & 0.03                            \\ \hline
\texttt{mushrooms} & 16                     &    $   2.67e+01        $                         &      $     6.90e+00      $                                                                    &     $ 9.79e+00   $    & 0.04                                    \\
\texttt{mushrooms} & 128                     &   $    2.56e+01     $                            &     $      1.10e+00     $                                                                       & $      1.69e+00  $    & 0.03                                      \\ \hline
\texttt{phishing} & 16                     &     $  1.95e+01         $                        &         $  8.28e+00         $                                                                   &    $   1.26e+01   $  & 0.04                                      \\
\texttt{phishing} & 128                     &     $  8.57e+00      $                           &      $     1.26e+00   $                                                                        &   $    2.44e+00   $  & 0.03                                        \\ \hline
\texttt{sonar} & 16                     &      $ 2.46e-01 $                       &         $  1.63e-01       $                                                                     &     $ 1.53e+00    $     & 0.09                               \\
\texttt{sonar} & 128                     &    $   6.34e-02      $                          &         $  3.08e-02      $                                                                     &    $  2.26e-02   $              & 0.05                             \\ \hline
\texttt{splice} & 16                     &      $ 1.68e+00        $                         &         $  9.33e-01      $                                                                      &     $1.19e+00    $  & 0.05                                        \\
\texttt{splice} & 128                     &    $   3.41e-01      $                           &       $    1.14e-01    $                                                                       &    $   9.44e-02 $  & 0.05                                         \\ \hline
\texttt{w8a} & 16                     &      $ 3.24e+02      $                           &         $     7.27e+01   $                                                                  &      $    7.27e+01    $ & 0.05                                     \\
\texttt{w8a} & 128                     &     $  4.89e+02    $                             &      $     9.17e+00    $                                                                       &   $   1.26e+01     $    & 0.03                                    \\ \hline
\end{tabular}}
\end{table}

\end{document}